\newtheorem{theorem}{Theorem}
\newtheorem{proposition}[theorem]{Proposition}
\newtheorem{corollary}[theorem]{Corollary}
\newtheorem {remark}[theorem]{Remark}
\title[]{
Limit Cycles Bifurcating from a Periodic Annulus
in Discontinuous Planar Piecewise Linear Hamiltonian differential System with Three Zones}
\author[C. Pessoa and R. Ribeiro]{}
  \subjclass[2021]{34C07}
   \keywords{Limit Cycles; Piecewise Hamiltonian differential system; Melnikov function; Periodic Annulus}
\begin{document}
 \maketitle

\centerline{\scshape  Claudio Pessoa and Ronisio Ribeiro}
\medskip

{\footnotesize \centerline{Universidade Estadual Paulista (UNESP),} \centerline{Instituto de Bioci\^encias Letras e Ci\^encias Exatas,} \centerline{R. Cristov\~ao Colombo, 2265, 15.054-000, S. J. Rio Preto, SP, Brazil }
\centerline{\email{c.pessoa@unesp.br} and \email{ronisio.ribeiro@unesp.br}}}

\medskip

\bigskip

\begin{quote}{\normalfont\fontsize{8}{10}\selectfont
{\bfseries Abstract.}
In this paper, we study the number of limit cycles that can bifurcating from a periodic annulus in discontinuous planar piecewise linear Hamiltonian differential system with three zones separated by two parallel straight lines. We prove that if the central subsystem, i.e. the system defined between the two parallel lines, has a real center and the others subsystems have centers or saddles, then we have at least three limit cycles that appear after perturbations of periodic annulus. For this, we study the number of zeros of a Melnikov function for piecewise Hamiltonian system and present a normal form for this system in order to simplify the computations.
\par}
\end{quote}

\section{Introduction and Main Results}
The first works on piecewise differential systems appeared in the 1930s, see  \cite{And66}. This class of systems have great applicability, mainly in mechanics, electrical circuits, control theory, etc (see for instance the book \cite{diB08} and the papers \cite{Chu90, Fit61, McK70, Nag62}). This subject has piqued the attention of researchers in qualitative theory of differential equations and numerous studies about this topic have arisen in the literature recently.

Piecewise differential systems with two zones are the most studied, either for their applications in modeling phenomena in general or for their apparent simplicity (see \cite{Fre12, LiS19b}). As in the smooth case, the researches are mainly concentrated to the determination of the number and position of the limit cycles of these systems. In 1998, Freire, Ponce, Rodrigo and Torres in \cite{Fre98} proved that a continuous piecewise linear differential systems in the plane with two zones has at most one limit cycle. In the discontinuous case, the maximum number of limit cycles is not known, but important partial results about this problem have been obtained, see for example \cite{Buz13, Bra13, Fre14b, Lli18b}. 

The problem becomes more complicated when we have more than two zones and there are a few works that deal with the discontinuous case (see \cite{Don17, Hu13, Lli15b, Wan16}). However, when restrictive hypotheses such as symmetry and linearity are imposed, the issue of limit cycles is well explored. More precisely, for symmetric continuous piecewise linear differential systems with three zones, conditions for nonexistence and existence of one, two or three limit cycles have been obtained (see for instance the book \cite{Lli14}). For the nonsymmetric case, examples with two limit cycles surrounding the only singular point at the origin was found in \cite{Lim17, Lli15}.

Recently, some researchers have been trying to estimate the number of limit cycles in discontinuous piecewise Hamiltonian differential systems with three zones. In this direction, we have papers with one limit cycle, see \cite{Fon20, Lli18a} and more than two limit cycles, see \cite{Xio20, Xio21, Yan20}. In this work, we contribute along these lines. Our goal is to study the number of limit cycles that can bifurcated from periodic annulus of families of discontinuous planar piecewise linear Hamiltonian differential system with three zones separated by two parallel straight lines. We prove that if the central subsystem, i.e. the system between the two parallel lines, has a real center and the others subsystems have  centers or saddles, then we have at least three limit cycles,  visiting the three zones, that bifurcate from an periodic annulus. Our results are obtained by studying the number of zeros of the Melnikov function for piecewise Hamiltonian system, see the papers \cite{Xio20, Xio21} for more details about the Melnikov function.

In order to set the problem, let $h_i:\mathbb{R}^2\rightarrow\mathbb{R}$,  $i=L,R$, be the functions  $h_{\scriptscriptstyle L}(x,y)=x+1$ and $h_{\scriptscriptstyle R}(x,y)=x-1$. Denote by $\Sigma_{\scriptscriptstyle L}=h_{\scriptscriptstyle L}^{-1}(0)$ and $\Sigma_{\scriptscriptstyle R}=h_{\scriptscriptstyle R}^{-1}(0)$ the {\it switching curves}. This straight lines decomposes the plane in three regions 
$$R_{\scriptscriptstyle L}=\{(x,y)\in\mathbb{R}^2:x<-1\},\quad R_{{\scriptscriptstyle C}}=\{(x,y)\in\mathbb{R}^2:-1<x<1\},$$ 
and 
$$R_{{\scriptscriptstyle R}}=\{(x,y)\in\mathbb{R}^2:x>1\}.$$

Consider the discontinuous planar piecewise linear near--Hamiltonian system with three zones, given by
\begin{equation}\label{eq:01}
	\left\{\begin{array}{ll}
		\dot{x}= H_y(x,y)+\epsilon f(x,y), \\
		\dot{y}= -H_x(x,y)+\epsilon g(x,y),
	\end{array}
	\right.
\end{equation}
with
\begin{equation*}
	H(x,y)=\left\{\begin{array}{ll}\vspace{0.2cm}
		\hspace{-0.3cm}H^{\scriptscriptstyle L}(x,y)=\dfrac{b_{\scriptscriptstyle L}}{2}y^2-\dfrac{c_{\scriptscriptstyle L}}{2}x^2+a_{\scriptscriptstyle L}xy+\alpha_{\scriptscriptstyle L}y-\beta_{\scriptscriptstyle L}x, \quad x\leq -1, \\ \vspace{0.2cm}
		\hspace{-0.3cm}H^{\scriptscriptstyle C}(x,y)=\dfrac{b_{\scriptscriptstyle C}}{2}y^2-\dfrac{c_{\scriptscriptstyle C}}{2}x^2+a_{\scriptscriptstyle C}xy+\alpha_{\scriptscriptstyle C}y-\beta_{\scriptscriptstyle C}x, -1\leq x\leq 1, \\
		\hspace{-0.3cm}H^{\scriptscriptstyle R}(x,y)=\dfrac{b_{\scriptscriptstyle R}}{2}y^2-\dfrac{c_{\scriptscriptstyle R}}{2}x^2+a_{\scriptscriptstyle R}xy+\alpha_{\scriptscriptstyle R}y-\beta_{\scriptscriptstyle R}x, \quad x \geq 1, \\
	\end{array}
	\right.
\end{equation*}
\begin{equation}\label{eq:02}
	f(x,y)=\left\{\begin{array}{ll}
		f_{\scriptscriptstyle L}(x,y)=p_{\scriptscriptstyle L}x+q_{\scriptscriptstyle L}y+r_{\scriptscriptstyle L}, \quad x\leq -1, \\
		f_{\scriptscriptstyle C}(x,y)=p_{\scriptscriptstyle C}x+q_{\scriptscriptstyle C}y+r_{\scriptscriptstyle C}, \quad -1\leq x\leq 1, \\
		f_{\scriptscriptstyle R}(x,y)=p_{\scriptscriptstyle R}x+q_{\scriptscriptstyle R}y+r_{\scriptscriptstyle R}, \quad x \geq 1, \\
	\end{array}
	\right.
\end{equation}
\begin{equation}\label{eq:03}
	g(x,y)=\left\{\begin{array}{ll}
		g_{\scriptscriptstyle L}(x,y)=s_{\scriptscriptstyle L}x+u_{\scriptscriptstyle L}y+v_{\scriptscriptstyle L}, \quad x\leq -1, \\
		g_{\scriptscriptstyle C}(x,y)=s_{\scriptscriptstyle C}x+u_{\scriptscriptstyle C}y+v_{\scriptscriptstyle C}, \quad -1\leq x\leq 1, \\
		g_{\scriptscriptstyle R}(x,y)=s_{\scriptscriptstyle R}x+u_{\scriptscriptstyle R}y+v_{\scriptscriptstyle R}, \quad x \geq 1, \\
	\end{array}
	\right.
\end{equation}
where the dot denotes the derivative with respect to the independent variable $t$, here called the time, and $0\leq\epsilon<<1$. We call system \eqref{eq:01} of {\it left subsystem} when $x\leq -1$, {\it right subsystem} when $x\geq 1$ and {\it central subsystem} when $-1\leq x\leq 1$. Denote by $X_{\scriptscriptstyle L}(x,y)$, $X_{\scriptscriptstyle C}(x,y)$ and $X_{\scriptscriptstyle R}(x,y)$ the planar piecewise linear vector fields associated with the left, central and right subsystem from  $\eqref{eq:01}|_{\epsilon=0}$, respectively. 

We will use the vector field $X_{\scriptscriptstyle L}$ and the switching curve $\Sigma_{\scriptscriptstyle L}$ in the next definitions. However, we can easily adapt the definitions to the vector fields $X_{\scriptscriptstyle C}$ and $X_{\scriptscriptstyle R}$ and the switching curve $\Sigma_{\scriptscriptstyle R}$.

We say that the vector field $X_{\scriptscriptstyle L}$ has a real equilibrium $p$ if $p$ is an equilibrium of $X_{\scriptscriptstyle L}$ and $p\in R_{\scriptscriptstyle L}$. Otherwise, we will say that $X_{\scriptscriptstyle L}$ has a virtual equilibrium $p$ if $p\in (R_{\scriptscriptstyle L})^c$, where $(R_{\scriptscriptstyle L})^c$ denotes the complementary of $R_{\scriptscriptstyle L}$ in $\mathbb{R}^2$.

The derivative of function $h_{\scriptscriptstyle L}$ in the direction of the vector field $X_{\scriptscriptstyle L}$, i.e., the expression
$X_{\scriptscriptstyle L} h_{\scriptscriptstyle L}(p)=\langle  X_{\scriptscriptstyle L}(p),\nabla h_{\scriptscriptstyle L}(p)\rangle,$
where $\left\langle \cdot,\cdot\right\rangle$ is the usual inner product in $\mathbb{R}^2$,  characterize the contact between the vector field $X_{\scriptscriptstyle L}$ and the switching curve $\Sigma_{\scriptscriptstyle L}$. 
When $p\in\Sigma_{\scriptscriptstyle L}$ and $X_{\scriptscriptstyle L} h_{\scriptscriptstyle L}(p)=0$ we say that $p$ is a {\it tangent point} of $X_{\scriptscriptstyle L}$.
We distinguish the followings subsets of $\Sigma_{\scriptscriptstyle L}$ (the same for $\Sigma_{\scriptscriptstyle R}$).

\medskip

\noindent Crossing set:
$$\Sigma_{\scriptscriptstyle L}^{c}=\{p\in \Sigma_{\scriptscriptstyle L}:X_{\scriptscriptstyle L} h_{\scriptscriptstyle L}(p) \cdot X_{\scriptscriptstyle C} h_{\scriptscriptstyle L}(p)>0\};$$

\noindent Sliding set:
$$\Sigma_{\scriptscriptstyle L}^{s}=\{p\in \Sigma_{\scriptscriptstyle L}:X_{\scriptscriptstyle L} h_{\scriptscriptstyle L}(p)>0, X_{\scriptscriptstyle C} h_{\scriptscriptstyle L}(p)<0\};$$

\noindent Escaping set:
$$\Sigma_{\scriptscriptstyle L}^{e}=\{p\in \Sigma_{\scriptscriptstyle L}:X_{\scriptscriptstyle L} h_{\scriptscriptstyle L}(p)<0, X_{\scriptscriptstyle C} h_{\scriptscriptstyle L}(p)>0\}.$$


Suppose that system $\eqref{eq:01}|_{\epsilon=0}$ satisfies the following hypotheses:
\begin{itemize}
	\item[(H1)] The unperturbed central subsystem from $\eqref{eq:01}|_{\epsilon=0}$ has a real center and the others unperturbed subsystems from $\eqref{eq:01}|_{\epsilon=0}$ have centers or saddles.
	\item[(H2)] The unperturbed system from $\eqref{eq:01}|_{\epsilon=0}$ has only crossing points on the straights lines $x=\pm 1$, except by some tangent points.
	\item[(H3)] The unperturbed system from $\eqref{eq:01}|_{\epsilon=0}$ has a periodic annulus consisting of a family of crossing periodic orbits around the origin such that each orbit of this family passes thought the three zones with clockwise orientation. 	
\end{itemize}

The main result in this paper is the follow.

\begin{theorem}\label{the:01}
	The number of limit cycles of system \eqref{eq:01}, satisfying hypothese {\rm (Hi)} for $i=1,2,3$, which can bifurcate from the periodic annulus of the unperturbed system $\eqref{eq:01}|_{\epsilon=0}$ is at least three.
\end{theorem}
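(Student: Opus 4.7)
The approach is to bifurcate limit cycles by analyzing the zeros of a first--order Melnikov function adapted to the piecewise Hamiltonian setting, as in \cite{Xio20, Xio21}: each simple zero of this function produces a limit cycle of \eqref{eq:01} that bifurcates from the corresponding crossing periodic orbit guaranteed by (H3). The plan proceeds in four steps.

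First, I would reduce \eqref{eq:01} to a normal form. By (H1) the unperturbed central vector field $X_{\scriptscriptstyle C}$ has a linear center, while $X_{\scriptscriptstyle L}$ and $X_{\scriptscriptstyle R}$ are either centers or saddles. Choosing an affine change of coordinates of the form $(x,y)\mapsto (x,Ay+Bx+C)$, which preserves both switching lines $\Sigma_{\scriptscriptstyle L}$ and $\Sigma_{\scriptscriptstyle R}$, together with a time rescaling, I can normalize $H^{\scriptscriptstyle C}$ so that the central Hamiltonian becomes essentially $H^{\scriptscriptstyle C}(x,y)=\tfrac12 y^2+\tfrac12 \omega^2 x^2$ (after eliminating the cross term $a_{\scriptscriptstyle C}xy$ and the linear terms, using that the center is real). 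The coefficients of $H^{\scriptscriptstyle L}$ and $H^{\scriptscriptstyle R}$ must then satisfy continuity conditions on $\Sigma_{\scriptscriptstyle L}$ and $\Sigma_{\scriptscriptstyle R}$ inherited from the existence of crossing periodic orbits (H2)--(H3). This is the promised normal form that reduces the number of free parameters one must track.

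Second, for each periodic orbit $\gamma_h$ at energy $h$ in the annulus, label by $A_1(h)\in\Sigma_{\scriptscriptstyle L}$, $A_2(h)\in\Sigma_{\scriptscriptstyle R}$, $A_3(h)\in\Sigma_{\scriptscriptstyle R}$, $A_4(h)\in\Sigma_{\scriptscriptstyle L}$ its four crossing points with the switching lines, ordered clockwise, and denote by $\gamma_h^{\scriptscriptstyle L}$, $\gamma_h^{\scriptscriptstyle C,\pm}$, $\gamma_h^{\scriptscriptstyle R}$ the corresponding arcs. The Melnikov function, following the construction in \cite{Xio20, Xio21} for three--zone piecewise Hamiltonian systems, takes the shape
\begin{equation*}
M(h)=\int_{\gamma_h^{\scriptscriptstyle L}}\!g_{\scriptscriptstyle L}\,dx-f_{\scriptscriptstyle L}\,dy+\int_{\gamma_h^{\scriptscriptstyle C}}\!g_{\scriptscriptstyle C}\,dx-f_{\scriptscriptstyle C}\,dy+\int_{\gamma_h^{\scriptscriptstyle R}}\!g_{\scriptscriptstyle R}\,dx-f_{\scriptscriptstyle R}\,dy,
\end{equation*}
with possible Jacobian correction factors coming from the fact that the Hamiltonians $H^{\scriptscriptstyle L},H^{\scriptscriptstyle C},H^{\scriptscriptstyle R}$ take different values on $\Sigma_{\scriptscriptstyle L}\cup\Sigma_{\scriptscriptstyle R}$. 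Parametrizing each arc by the flow of the unperturbed linear center/saddle in its zone, each integral becomes an elementary function of $h$ (trigonometric for centers, hyperbolic for saddles, plus algebraic pieces coming from the endpoints $A_i(h)$).

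Third, after collecting terms, one obtains an expansion $M(h)=\sum_k c_k\,\Phi_k(h)$, where the generating functions $\Phi_k(h)$ form a finite family of elementary functions on the annulus interval, and the coefficients $c_k$ are explicit linear combinations of the perturbation parameters $p_{\scriptscriptstyle L},q_{\scriptscriptstyle L},r_{\scriptscriptstyle L},\ldots,v_{\scriptscriptstyle R}$. The conclusion will follow once we exhibit four of the $\Phi_k$ that are linearly independent on the annulus interval and four corresponding coefficients $c_{k_1},\ldots,c_{k_4}$ that can be chosen independently by adjusting $f,g$. Then one prescribes three sign changes of $M$ at three chosen energy values, producing three simple zeros and hence, via the standard implicit--function argument for piecewise Melnikov theory, three crossing limit cycles visiting all three zones.

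The main obstacle is the last step: verifying the Chebyshev--type linear independence of the generating family $\{\Phi_k(h)\}$ and the rank of the matrix that expresses $c_k$ in terms of the perturbation parameters. The difficulty is amplified by the fact that the form of $\Phi_k$ differs according to whether $X_{\scriptscriptstyle L}$ and $X_{\scriptscriptstyle R}$ are centers or saddles, so several sub--cases must be handled separately, and in each one must rule out an accidental cancellation among contributions from the three zones. The normal form from Step~1 is precisely what keeps this computation tractable.
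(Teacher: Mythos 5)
Your plan follows the same route as the paper: a switching-line-preserving linear change of variables that normalizes the central Hamiltonian to $\tfrac12(x^2+y^2)$, the first order Melnikov function of \cite{Liu10, Xio20, Xio21} with the ratio factors $H_y^{\scriptscriptstyle R}(A)/H_y^{\scriptscriptstyle C}(A)$, etc.\ (which in the normal form reduce to the constants $b_{\scriptscriptstyle R}$, $b_{\scriptscriptstyle R}/b_{\scriptscriptstyle L}$, $b_{\scriptscriptstyle R}$, $1$), an expansion $M(h)=\sum_k c_k\Phi_k(h)$ into elementary generating functions, and a linear-independence argument yielding three simple zeros. However, the step you defer as ``the main obstacle'' is precisely the content of the proof, and without it the argument is not complete. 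The theorem is decided by showing that in each configuration (SCS, CCS, CCC) the four generating functions --- which in the paper are $f_0(h)=h$, $f_{\scriptscriptstyle C}^{\scriptscriptstyle C}(h)=(h^2+1)\arccos\bigl(\tfrac{h^2-1}{h^2+1}\bigr)$, and an $\arccos$-type or $\log$-type function for each outer zone according to whether it carries a center or a saddle --- are linearly independent on the annulus interval, and that the four coefficients range over an open set of $\mathbb{R}^4$ as the perturbation parameters vary. The paper settles the first point by computing a Wronskian and evaluating it at a single $h$ for explicit parameter choices in each case; the second is read off from $k_{\scriptscriptstyle C}^{\scriptscriptstyle C}=b_{\scriptscriptstyle R}(p_{\scriptscriptstyle C}+u_{\scriptscriptstyle C})$, $k_{\scriptscriptstyle R}^{\scriptscriptstyle S}=(p_{\scriptscriptstyle R}+u_{\scriptscriptstyle R})/(2b_{\scriptscriptstyle R}\omega_{\scriptscriptstyle RS}^3)$, etc., together with the free constants $r_{\scriptscriptstyle L},r_{\scriptscriptstyle R}$ entering only $k_0$. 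Neither verification is routine enough to be waved through: the functions are not an a priori Chebyshev family, and the whole theorem could fail if an accidental dependence occurred.

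A second concrete issue: your count of ``four independent generating functions'' is not uniform over the configurations allowed by (H1)--(H3). In the SCS case in which the stable separatrix point $P_{\scriptscriptstyle R}^{s}$ and the unstable separatrix point $P_{\scriptscriptstyle L}^{u}$ have equal ordinates (the heteroclinic boundary of the annulus), the relation forced between $\beta_{\scriptscriptstyle L}$ and $\beta_{\scriptscriptstyle R}$ collapses $f_{\scriptscriptstyle L}^{\scriptscriptstyle S}$ and $f_{\scriptscriptstyle R}^{\scriptscriptstyle S}$ into a single function, the family degenerates to three elements, and the method only produces two limit cycles there. This does not contradict the theorem --- it is an existence statement, realized by the other configurations --- but your argument as written would overclaim three zeros in that subcase. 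You need the case analysis to be explicit about which configurations actually deliver the bound.
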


The paper is organized as follows. In Section \ref{sec:Mel} we introduce the first order Melnikov function associated to system $\eqref{eq:01}$. In Section \ref{sec:NF} we obtain a normal form to system $\eqref{eq:01}|_{\epsilon=0}$ that simplifies the computations and in Section \ref{sec:Teo} we will prove Theorem \ref{the:01}.

\section{Melnikov Function}\label{sec:Mel}

In this section, we will present the first order Melnikov function associated to system $\eqref{eq:01}$ that we will use to prove the main result of this paper.

Suppose that $\eqref{eq:01}|_{\epsilon=0}$ satisfies the hypothesis (H3), i.e. there exists an open interval $J=(\alpha,\beta)$ such that for each $h\in J$ we have four points, $A(h)=(1,h)$, $A_1(h)=(1,a_1(h))\in \Sigma_{\scriptscriptstyle R}$, with $a_1(h)<h$, and $A_2(h)=(-1,a_2(h))$, $A_3(h)=(-1,a_3(h))\in \Sigma_{\scriptscriptstyle L}$, with $a_2(h)<a_3(h)$, whose are determined by the following equations
\begin{equation}\label{eq:05}
	\begin{aligned}
		& H^{\scriptscriptstyle R}(A(h))=H^{\scriptscriptstyle R}(A_1(h)), \\
		& H^{\scriptscriptstyle C}(A_1(h))=H^{\scriptscriptstyle C}(A_2(h)), \\
		& H^{\scriptscriptstyle L}(A_2(h))=H^{\scriptscriptstyle L}(A_3(h)), \\
		& H^{\scriptscriptstyle C}(A_3(h))=H^{\scriptscriptstyle C}(A(h)), 
	\end{aligned}
\end{equation}
satisfying, for $h\in J$,
$$H^{\scriptscriptstyle R}_y(A(h))\,H^{\scriptscriptstyle R}_y(A_1(h))\,H^{\scriptscriptstyle L}_y(A_2(h))\,H^{\scriptscriptstyle L}_y(A_3(h))\ne 0,$$ 
$$H^{\scriptscriptstyle C}_y(A(h))\,H^{\scriptscriptstyle C}_y(A_1(h))\,H^{\scriptscriptstyle C}_y(A_2(h))\,H^{\scriptscriptstyle C}_y(A_3(h))\ne 0.$$
Moreover, system $\eqref{eq:01}|_{\epsilon=0}$ has a crossing periodic orbit $L_h=L_h^{\scriptscriptstyle R}\cup\bar{L}_h^{\scriptscriptstyle C}\cup L_h^{\scriptscriptstyle L}\cup L_h^{\scriptscriptstyle C}$ passing through these points (see Fig. \ref{fig:01}), where
\begin{equation*}
	\begin{aligned}
		 L_h^{\scriptscriptstyle R}=\,&\Big\{(x,y)\in\mathbb{R}^2:H^{\scriptscriptstyle R}(x,y)=H^{\scriptscriptstyle R}(A(h))=\dfrac{b_{\scriptscriptstyle R}}{2}h^2+(a_{\scriptscriptstyle R}+\alpha_{\scriptscriptstyle R})h\\
		&\,\,\,\,-\Big(\dfrac{c}{2}+\beta_{\scriptscriptstyle R}\Big),x>1\Big\}, \\
		 \bar{L}_h^{\scriptscriptstyle C}=\,&\{(x,y)\in\mathbb{R}^2:H^{\scriptscriptstyle C}(x,y)=H^{\scriptscriptstyle C}(A_1(h)),-1\leq x\leq1 \quad\text{and}\quad y<0\}, \\
		 L_h^{\scriptscriptstyle L}=\,&\{(x,y)\in\mathbb{R}^2:H^{\scriptscriptstyle L}(x,y)=H^{\scriptscriptstyle L}(A_2(h)),x<-1\}, \\
		 L_h^{\scriptscriptstyle C}=\,&\{(x,y)\in\mathbb{R}^2:H^{\scriptscriptstyle C}(x,y)=H^{\scriptscriptstyle C}(A_3(h)),-1\leq x\leq1 \quad\text{and}\quad y>0\}.
	\end{aligned}
\end{equation*}

\begin{figure}[h]
	\begin{center}		
		\begin{overpic}[width=3in]{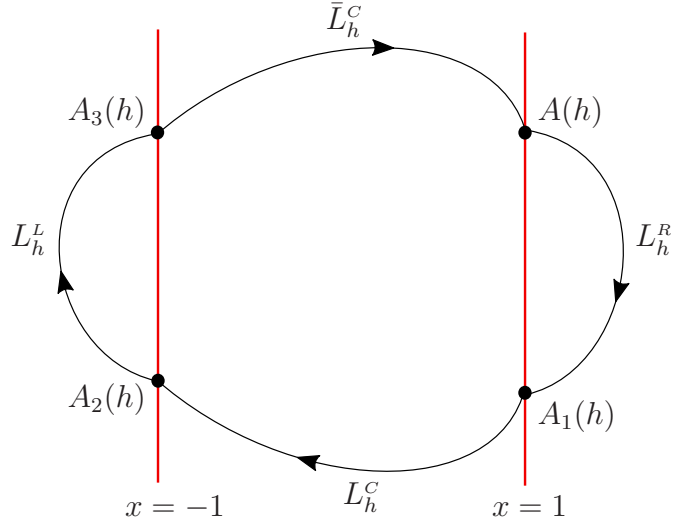}
			\put(76,-5) {$x=1$}
			\put(12,-5) {$x=-1$}
			\put(84,64) {$A(h)$}
			\put(84,11) {$A_1(h)$}
			\put(2,14) {$A_2(h)$}
			\put(2,64) {$A_3(h)$}
			\put(101,42) {$L_h^{\scriptscriptstyle R}$}
			\put(-8,42) {$L_h^{\scriptscriptstyle L}$}
			\put(50,-3) {$L_h^{\scriptscriptstyle C}$}
			\put(47,80) {$\bar{L}_h^{\scriptscriptstyle C}$}
		\end{overpic}
	\end{center}
	\vspace{0.7cm}
	\caption{The crossing periodic orbit of system  $\eqref{eq:01}|_{\epsilon=0}$.}\label{fig:01}
\end{figure} 

Consider the solution of right subsystem from \eqref{eq:01} starting from point $A(h)$. Let $A_{\epsilon}(h)=(1,a_{\epsilon}(h))$ be the first intersection point of this orbit with straight line $x=1$. Denote by $B_{\epsilon}(h)=(-1,b_{\epsilon}(h))$ the first intersection point of the orbit from central subsystem from \eqref{eq:01} starting at $A_{\epsilon}(h)$ with straight line $x=-1$, $C_{\epsilon}(h)=(-1,c_{\epsilon}(h))$ the first intersection point of the orbit from left subsystem from \eqref{eq:01} starting at $B_{\epsilon}(h)$ with straight line $x=-1$ and $D_{\epsilon}(h)=(1,d_{\epsilon}(h))$ the first intersection point of the orbit from central subsystem from \eqref{eq:01} starting at $C_{\epsilon}(h)$ with straight line $x=1$ (see Fig. \ref{fig:02}).

\begin{figure}[h]
	\begin{center}		
		\begin{overpic}[width=3in]{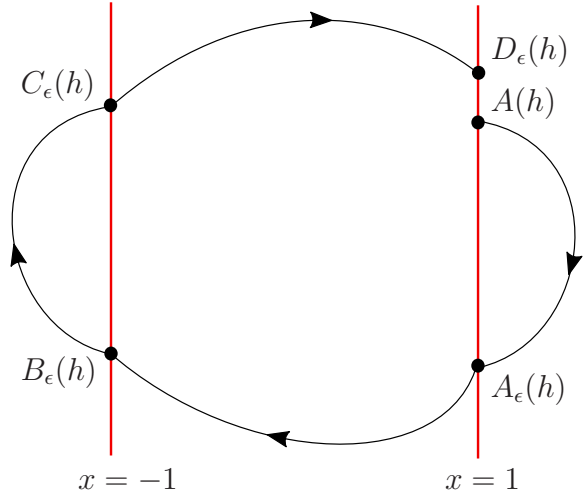}
			\put(76,-5) {$x=1$}
			\put(12,-5) {$x=-1$}
			\put(84,61) {$A(h)$}
			\put(84,11) {$A_\epsilon(h)$}
			\put(2,14)  {$B_\epsilon(h)$}
			\put(2,64)  {$C_\epsilon(h)$}
			\put(84,70)  {$D_\epsilon(h)$}
		\end{overpic}
	\end{center}
	\vspace{0.7cm}
	\caption{Poincaré map of system  \eqref{eq:01}.}\label{fig:02}
\end{figure} 

We define the Poincaré map of piecewise system \eqref{eq:01} as follows, 
$$H^{\scriptscriptstyle R}(D_{\epsilon}(h))-H^{\scriptscriptstyle R}(A(h))=\epsilon M(h)+\mathcal{O}(\epsilon^2),$$
where $M(h)$ is called the {\it first order Melnikov function} associated to piecewise system \eqref{eq:01}. Then, using the same idea of Theorem 1.1 in \cite{Liu10}, it is easy to obtain the following theorem.

\begin{theorem}
	Consider  system \eqref{eq:01} with $0\leq \epsilon <<1$ and suppose that the unperturbed system $\eqref{eq:01}|_{\epsilon=0}$ has a family of crossing periodic orbits around the origin. Then the first order Melnikov function can be expressed as
	\begin{equation}\label{eq:mel}
		\begin{aligned}
			M(h) & = \frac{H_y^{\scriptscriptstyle R}(A)}{H_y^{\scriptscriptstyle C}(A)} I_{\scriptscriptstyle C} + \frac{H_y^{\scriptscriptstyle R}(A)H_y^{\scriptscriptstyle C}(A_3)}{H_y^{\scriptscriptstyle C}(A)H_y^{\scriptscriptstyle L}(A_3)} I_{\scriptscriptstyle L}\\
			& \quad + \frac{H_y^{\scriptscriptstyle R}(A)H_y^{\scriptscriptstyle C}(A_3)H_y^{\scriptscriptstyle L}(A_2)}{H_y^{\scriptscriptstyle C}(A)H_y^{\scriptscriptstyle L}(A_3)H_y^{\scriptscriptstyle C}(A_2)} \bar{I}_{\scriptscriptstyle C} \\
			&\quad  +  \frac{H_y^{\scriptscriptstyle R}(A)H_y^{\scriptscriptstyle C}(A_3)H_y^{\scriptscriptstyle L}(A_2)H_y^{\scriptscriptstyle C}(A_1)}{H_y^{\scriptscriptstyle C}(A)H_y^{\scriptscriptstyle L}(A_3)H_y^{\scriptscriptstyle C}(A_2)H_y^{\scriptscriptstyle R}(A_1)} I_{\scriptscriptstyle R},
		\end{aligned}
	\end{equation}  
	where
	$$I_{\scriptscriptstyle C}=\int_{\widehat{A_3A}}g_{\scriptscriptstyle C}dx-f_{\scriptscriptstyle C}dy, \,\, I_{\scriptscriptstyle L}=\int_{\widehat{A_2A_3}}g_{\scriptscriptstyle L}dx-f_{\scriptscriptstyle L}dy,\,\, \bar{I}_{\scriptscriptstyle C}=\int_{\widehat{A_1A_2}}g_{\scriptscriptstyle C}dx-f_{\scriptscriptstyle C}dy$$
	and
	$$\quad I_{\scriptscriptstyle R}=\int_{\widehat{AA_1}}g_{\scriptscriptstyle R}dx-f_{\scriptscriptstyle R}dy.$$
	Furthermore, if $M(h)$ has a simple zero at $h^{*}$, then for $0< \epsilon <<1$, the system \eqref{eq:01} has a unique limit cycle near $L_{h^{*}}$. 
\end{theorem}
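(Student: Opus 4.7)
The plan is to follow the first--order Melnikov construction for piecewise--smooth Hamiltonian systems as developed in \cite{Liu10}, adapted to the three--zone architecture. The Poincar\'e displacement
$$\mathcal{D}(h,\epsilon):=H^{\scriptscriptstyle R}(D_\epsilon(h))-H^{\scriptscriptstyle R}(A(h))$$
will be split into four arc--by--arc contributions (one per zone traversal of the return) and then ``sewn'' across the two switching lines $\Sigma_{\scriptscriptstyle L},\Sigma_{\scriptscriptstyle R}$ using the energy matching relations \eqref{eq:05}. Extracting the $O(\epsilon)$--term of $\mathcal{D}$ should give \eqref{eq:mel}, after which the ``simple zero gives a limit cycle'' statement becomes a short implicit--function--theorem argument.

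First I would compute each arc contribution. Along the perturbed right trajectory one has $\dot H^{\scriptscriptstyle R}=\epsilon(H_x^{\scriptscriptstyle R}f_{\scriptscriptstyle R}+H_y^{\scriptscriptstyle R}g_{\scriptscriptstyle R})$; combining with $dx=H_y^{\scriptscriptstyle R}\,dt$ and $dy=-H_x^{\scriptscriptstyle R}\,dt$ along the unperturbed flow and replacing the perturbed path by the unperturbed arc $\widehat{AA_1}$ (the difference is $O(\epsilon)$, multiplied by an $\epsilon$ prefactor) yields
$$H^{\scriptscriptstyle R}(A_\epsilon)-H^{\scriptscriptstyle R}(A)=\epsilon\, I_{\scriptscriptstyle R}+O(\epsilon^2),$$
and analogously for the lower central, left, and upper central arcs, producing the integrals $\bar I_{\scriptscriptstyle C}$, $I_{\scriptscriptstyle L}$, $I_{\scriptscriptstyle C}$. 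To sew these, I would use the fact that two points $p,q$ on a common vertical switching line admit the Taylor identity $H^{\sigma}(p)-H^{\sigma}(q)=H_y^{\sigma}(q)(y_p-y_q)+O(|p-q|^2)$ for every side $\sigma\in\{L,C,R\}$ on which $H^\sigma$ is defined, hence
$$H^{\sigma_1}(p)-H^{\sigma_1}(q)=\frac{H_y^{\sigma_1}(q)}{H_y^{\sigma_2}(q)}\,\bigl[H^{\sigma_2}(p)-H^{\sigma_2}(q)\bigr]+O(|p-q|^2).$$
Applying this transfer identity four times at the corners $A$, $A_3$, $A_2$, $A_1$, and using the matching conditions $H^{\scriptscriptstyle R}(A)=H^{\scriptscriptstyle R}(A_1)$, $H^{\scriptscriptstyle C}(A_1)=H^{\scriptscriptstyle C}(A_2)$, $H^{\scriptscriptstyle L}(A_2)=H^{\scriptscriptstyle L}(A_3)$, $H^{\scriptscriptstyle C}(A_3)=H^{\scriptscriptstyle C}(A)$, the four arc contributions telescope inward from $D_\epsilon$ back to $A_\epsilon$, each transition multiplying the tail by an $H_y$--ratio; this produces exactly the cumulative coefficients appearing in \eqref{eq:mel}. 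The final statement is then immediate from the implicit function theorem applied to $\mathcal{D}(h,\epsilon)/\epsilon$, which extends continuously to $\epsilon=0$ with value $M(h)$: a simple zero of $M$ perturbs to a transversal zero of the full displacement for every sufficiently small $\epsilon>0$, yielding a unique crossing limit cycle near $L_{h^{*}}$.

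The main technical obstacle is the careful control of the $O(\epsilon^2)$ remainders. One must verify that the perturbed trajectories stay $O(\epsilon)$--close to the unperturbed ones over each finite--time zone traversal, which follows from a Gronwall estimate together with the transversal crossings guaranteed by (H2)--(H3), so that replacing perturbed integrals by their unperturbed counterparts is legitimate. A related subtlety is that $H$ is only piecewise smooth across $\Sigma_{\scriptscriptstyle L}$ and $\Sigma_{\scriptscriptstyle R}$, so each Taylor expansion at a corner must use the correct one--sided Hamiltonian $H^{\scriptscriptstyle L}$, $H^{\scriptscriptstyle C}$ or $H^{\scriptscriptstyle R}$. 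Finally, the hypothesis that the eight $H_y$--values at $A,A_1,A_2,A_3$ are all nonzero is exactly the transversality condition required to divide by these quantities in the telescoping step.
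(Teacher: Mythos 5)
Your proposal is correct and is precisely the argument the paper intends: the paper omits the proof and simply points to Theorem 1.1 of Liu--Han, whose derivation is exactly your arc-by-arc computation of $\dot H^{\sigma}=\epsilon(H_x^{\sigma}f_{\sigma}+H_y^{\sigma}g_{\sigma})$ combined with the corner transfer identity $H^{\sigma_1}(p)-H^{\sigma_1}(q)=\frac{H_y^{\sigma_1}(q)}{H_y^{\sigma_2}(q)}[H^{\sigma_2}(p)-H^{\sigma_2}(q)]+O(|p-q|^2)$ and the energy-matching relations \eqref{eq:05}, telescoping from $D_\epsilon$ back through $C_\epsilon$, $B_\epsilon$, $A_\epsilon$ to produce the cumulative $H_y$-ratios in \eqref{eq:mel}. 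The concluding implicit-function-theorem step for a simple zero is likewise the standard one.
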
	

\section{Normal Form}\label{sec:NF}

In order to simplify the computations to prove Theorem \ref{the:01} is convenient to do a continuous linear change of variables which transform system $\eqref{eq:01}|_{\epsilon=0}$ into a simple form. This change of variables is a homeomorphism with keeps invariant the straight lines $x=\pm 1$. Furthermore, this homeomorphism will be a topological equivalence between the systems. More precisely, we have the follow result.

\begin{proposition}\label{fn:01}
	The discontinuous piecewise linear differential systems $\eqref{eq:01}|_{\epsilon=0}$ satisfying assumption (Hi), $i=1,2,3$, after a change of variables can be written as
	\begin{equation}\label{eq:04}
		\left\{\begin{array}{ll}
			\dot{x}= H_y(x,y), \\
			\dot{y}= -H_x(x,y),
		\end{array}
		\right.
	\end{equation}
	where
	\begin{equation}\label{eq:fnh}
		H(x,y)=\left\{\begin{array}{ll}\vspace{0.2cm}
			\begin{aligned}
			H^{\scriptscriptstyle L}(x,y)=&\,\dfrac{b_{\scriptscriptstyle L}}{2}y^2-\dfrac{c_{\scriptscriptstyle L}}{2}x^2+a_{\scriptscriptstyle L}xy\\
			&+a_{\scriptscriptstyle L}y-\beta_{\scriptscriptstyle L}x,\quad\quad\quad\quad\quad x\leq -1, \\ \vspace{0.2cm}
			H^{\scriptscriptstyle C}(x,y)=&\,\dfrac{1}{2}x^2+\dfrac{1}{2}y^2, \quad\quad\quad\quad -1\leq x\leq 1, \\
			H^{\scriptscriptstyle R}(x,y)=&\,\dfrac{b_{\scriptscriptstyle R}}{2}y^2-\dfrac{c_{\scriptscriptstyle R}}{2}x^2+a_{\scriptscriptstyle R}xy\\
			&-a_{\scriptscriptstyle R}y-\beta_{\scriptscriptstyle R}x,\,\, \quad\quad\quad\quad\quad x \geq 1. \\
			\end{aligned}
		\end{array}
		\right.
	\end{equation}
\end{proposition}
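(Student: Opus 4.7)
The plan is to construct an affine change of coordinates $(x,y)\mapsto(x,\tilde y)$ with $\tilde y=Cx+Dy+F$, combined with a scalar rescaling of time $H\mapsto\mu H$, that brings $H^{\scriptscriptstyle C}$ to $\tfrac{1}{2}(x^2+y^2)$ while preserving the overall shape of $H^{\scriptscriptstyle L},H^{\scriptscriptstyle R}$. The form $\tilde x=x$ is essentially forced: a single affine map leaving both lines $x=\pm1$ invariant as sets must have $\tilde x\in\{x,-x\}$, and I take $\tilde x=x$ throughout.

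The four parameters $(C,D,F,\mu)$ will be determined by four linear conditions in the transformed $H^{\scriptscriptstyle C}$: coefficients of $y^2$ and $x^2$ equal to $1/2$, and coefficients of $xy$ and $y$ equal to $0$. By (H1) the central linear part has trace $0$ and determinant $-(a_{\scriptscriptstyle C}^2+b_{\scriptscriptstyle C}c_{\scriptscriptstyle C})$, and a real center forces $a_{\scriptscriptstyle C}^2+b_{\scriptscriptstyle C}c_{\scriptscriptstyle C}<0$; the clockwise rotation in (H3) then fixes $b_{\scriptscriptstyle C}>0$. The system solves explicitly as
$$D^2=-\frac{b_{\scriptscriptstyle C}^{\,2}}{a_{\scriptscriptstyle C}^2+b_{\scriptscriptstyle C}c_{\scriptscriptstyle C}},\quad C=\frac{a_{\scriptscriptstyle C}D}{b_{\scriptscriptstyle C}},\quad F=\frac{\alpha_{\scriptscriptstyle C}D}{b_{\scriptscriptstyle C}},\quad \mu=\frac{D^2}{b_{\scriptscriptstyle C}},$$
all real with $D,\mu>0$. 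The residual coefficient of $x$ in the transformed $H^{\scriptscriptstyle C}$ is proportional to $a_{\scriptscriptstyle C}\alpha_{\scriptscriptstyle C}+b_{\scriptscriptstyle C}\beta_{\scriptscriptstyle C}$, equivalently to the $x$-coordinate of the real center of $X_{\scriptscriptstyle C}$; it vanishes when the center sits on the $y$-axis, the natural compatibility with the symmetric switching pair $x=\pm1$, and may be arranged by a preliminary global linear adjustment of the setup.

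Extending the same $(C,D,F,\mu)$ to $R_{\scriptscriptstyle L}$ and $R_{\scriptscriptstyle R}$ gives a globally continuous change of variables (it is a single affine formula) that preserves $x=\pm1$ pointwise. Substituting $y=(\tilde y-Cx-F)/D$ in $H^{\scriptscriptstyle L}$ and multiplying by $\mu$ produces a quadratic polynomial $\tfrac{\tilde b_{\scriptscriptstyle L}}{2}\tilde y^2 -\tfrac{\tilde c_{\scriptscriptstyle L}}{2}x^2+\tilde a_{\scriptscriptstyle L}x\tilde y+\tilde\alpha_{\scriptscriptstyle L}\tilde y-\tilde\beta_{\scriptscriptstyle L}x+\text{const}$ of exactly the shape of the $H^{\scriptscriptstyle L}$-line in \eqref{eq:fnh}; identical algebra treats $H^{\scriptscriptstyle R}$. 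After relabelling the tilded coefficients as untilded ones, the only identifications still to justify are $\alpha_{\scriptscriptstyle L}=a_{\scriptscriptstyle L}$ and $\alpha_{\scriptscriptstyle R}=-a_{\scriptscriptstyle R}$.

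These follow from (H2). On $\Sigma_{\scriptscriptstyle L}$ the two functions $X_{\scriptscriptstyle L}h_{\scriptscriptstyle L}=H^{\scriptscriptstyle L}_y(-1,y)$ and $X_{\scriptscriptstyle C}h_{\scriptscriptstyle L}=H^{\scriptscriptstyle C}_y(-1,y)$ are affine in $y$; by (H2) their product is non-negative on $\Sigma_{\scriptscriptstyle L}$ except at tangent points, which rules out distinct zeros, for otherwise the interval between them would be a sliding or escaping segment. Hence their zeros coincide, and in the new coordinates $H^{\scriptscriptstyle C}_y(-1,y)=y$ vanishes only at $(-1,0)$, so $H^{\scriptscriptstyle L}_y(-1,0)=-a_{\scriptscriptstyle L}+\alpha_{\scriptscriptstyle L}=0$, i.e.\ $\alpha_{\scriptscriptstyle L}=a_{\scriptscriptstyle L}$; the same argument at $(1,0)$ yields $\alpha_{\scriptscriptstyle R}=-a_{\scriptscriptstyle R}$, producing exactly \eqref{eq:fnh}. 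I expect the hardest step to be the normalisation of $H^{\scriptscriptstyle C}$: checking solvability with the correct sign of $D^2$ (which rests squarely on (H1)--(H3)) and handling the compatibility condition $a_{\scriptscriptstyle C}\alpha_{\scriptscriptstyle C}+b_{\scriptscriptstyle C}\beta_{\scriptscriptstyle C}=0$; once $H^{\scriptscriptstyle C}$ is in canonical form, the lateral pieces and the (H2)-driven identifications are substitution and bookkeeping.
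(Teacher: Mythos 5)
Your proposal is correct and follows essentially the same route as the paper: the same linear change of variables fixing $x$ (forced by invariance of $x=\pm1$) that normalises $H^{\scriptscriptstyle C}$ to $\tfrac12(x^2+y^2)$ under the real-center conditions $a_{\scriptscriptstyle C}^2+b_{\scriptscriptstyle C}c_{\scriptscriptstyle C}<0$, $b_{\scriptscriptstyle C}>0$, together with the same use of (H2) to rule out sliding/escaping segments and hence force the tangent points of adjacent subsystems to coincide, which yields $\alpha_{\scriptscriptstyle L}=a_{\scriptscriptstyle L}$ and $\alpha_{\scriptscriptstyle R}=-a_{\scriptscriptstyle R}$. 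The only differences are cosmetic — you transform first and impose the (H2) conditions in the new coordinates, whereas the paper derives them first and then transforms — and both treat the placement of the central equilibrium at the origin with the same brief translation remark.
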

\begin{proof}
	Through a translation, we can assume that the singularity of the central subsystem from $\eqref{eq:01}|_{\epsilon=0}$ is the origin, i.e. $\alpha_{\scriptscriptstyle C}=\beta_{\scriptscriptstyle C}=0$.
	By the hypotheses (H1) and (H3), we have that the central subsystem from $\eqref{eq:01}|_{\epsilon=0}$ satisfy $a_{\scriptscriptstyle C}^2+b_{\scriptscriptstyle C}c_{\scriptscriptstyle C}<0$ and $b_{\scriptscriptstyle C}>0$. Note that $b_i\ne 0$, for $i=L,R$. In fact, if the singular points of the subsystems from $\eqref{eq:01}|_{\epsilon=0}$ are centers, this is true due to the clockwise orientation of the orbits. Now, if the singular points are saddles and $b_i=0$ we have a separatrices parallel to switching straight lines $x=\pm 1$. System $\eqref{eq:01}|_{\epsilon=0}$ has four tangent points given by $P_1=(1,-a_{\scriptscriptstyle C}/b_{\scriptscriptstyle C})$, $P_2=(1,-(a_{\scriptscriptstyle R}+\alpha_{\scriptscriptstyle R})/b_{\scriptscriptstyle R})$, $P_3=(-1,a_{\scriptscriptstyle C}/b_{\scriptscriptstyle C})$ and $P_4=(-1,(a_{\scriptscriptstyle L}-\alpha_{\scriptscriptstyle L})/b_{\scriptscriptstyle L})$. By hypothesis (H2), we have that the system $\eqref{eq:01}|_{\epsilon=0}$ have only crossing points on the straight lines $x=\pm 1$, except in the tangent points. Hence, for all $y\in\mathbb{R}\setminus\{\pm a_{\scriptscriptstyle C}/b_{\scriptscriptstyle C},-(a_{\scriptscriptstyle R}+\alpha_{\scriptscriptstyle R})/b_{\scriptscriptstyle R}),(a_{\scriptscriptstyle L}-\alpha_{\scriptscriptstyle L})/b_{\scriptscriptstyle L})\}$, we must have
	$$\left\langle  X_{\scriptscriptstyle L}(-1,y),(1,0)\right\rangle \left\langle  X_{\scriptscriptstyle C}(-1,y),(1,0)\right\rangle>0$$
	and
	$$\left\langle  X_{\scriptscriptstyle R}(1,y),(1,0)\right\rangle \left\langle  X_{\scriptscriptstyle C}(1,y),(1,0)\right\rangle>0.$$ 
	But this implies that $b_{\scriptscriptstyle L}b_{\scriptscriptstyle C}>0$, $b_{\scriptscriptstyle R}b_{\scriptscriptstyle C}>0$, $P_1=P_2$ and $P_3=P_4$. Therefore, as $b_{\scriptscriptstyle C}>0$, we have that
	\begin{equation}\label{ch:01}
		\alpha_{\scriptscriptstyle L}=\frac{a_{\scriptscriptstyle L}b_{\scriptscriptstyle C}-a_{\scriptscriptstyle C}b_{\scriptscriptstyle L}}{b_{\scriptscriptstyle C}},\quad b_{\scriptscriptstyle L}>0,\quad \alpha_{\scriptscriptstyle R}=\frac{-a_{\scriptscriptstyle R}b_{\scriptscriptstyle C}+a_{\scriptscriptstyle C}b_{\scriptscriptstyle R}}{b_{\scriptscriptstyle C}}\quad\text{and}\quad b_{\scriptscriptstyle R}>0.
	\end{equation}
	Assuming the conditions \eqref{ch:01}, consider the change of variables  
	\begin{displaymath}
		\left(\begin{array}{c}
			x\\
			y
		\end{array}\right)=\left(\begin{array}{cc}
			1 & 0\\
			-\dfrac{a_{\scriptscriptstyle C}}{b_{\scriptscriptstyle C}} & \dfrac{\sqrt{-a^2_{\scriptscriptstyle C}-b_{\scriptscriptstyle C}c_{\scriptscriptstyle C}}}{b_{\scriptscriptstyle C}}
		\end{array}\right)\left(\begin{array}{c}
			u\\
			v
		\end{array}\right)
	\end{displaymath}
	and rescaling the time by $\tilde{t}=\sqrt{-a^2_{\scriptscriptstyle C}-b_{\scriptscriptstyle C}c_{\scriptscriptstyle C}}\,t$.
	Applying the change of variables and rescaling the time above and rewriting the parameters conveniently, system $\eqref{eq:01}|_{\epsilon=0}$ becomes system \eqref{eq:04}.	
\end{proof}

\medskip

In what follows, we will consider the discontinuous planar piecewise linear near--Hamiltonian system \eqref{eq:01} 
with $f(x,y)$, $g(x,y)$ and $H(x,y)$ given by \eqref{eq:02}, \eqref{eq:03} and \eqref{eq:fnh}, respectively.

\section{Proof of Theorem \ref{the:01}}\label{sec:Teo}

The proof of Theorem \ref{the:01} is a straightforward consequence of Corollarys \ref{scs-a}, \ref{ccs-c}-\ref{ccc-c}. 

We can classify the systems that satisfy the hypothesis (H1) according to the configuration of their singular points. Thus, denoting the centers by the capital letter C and by S the saddles, in the case of three zones, we have the following three class of piecewise linear Hamiltonian systems: SCS, CCS and CCC. This is, CCC indicates that the singular points of
the linear systems that define the piecewise differential system are centers and so on.

In order to computate the zeros of the first order Melnikov function, it is necessary to find the open interval $J$, where it is define. For this, consider the follow proposition.

\begin{proposition}
	Consider the system \eqref{eq:01} with the hypotheses (Hi), $i=1,2,3$. 
	\begin{itemize}
		\item[(a)] If the system $\eqref{eq:01}|_{\epsilon=0}$ is of type SCS or CCS, then  $J=(0,\tau)$, where $\tau=(a_{\scriptscriptstyle R}^2-b_{\scriptscriptstyle R}\beta_{\scriptscriptstyle R}-\omega_{\scriptscriptstyle RS}^2)/b_{\scriptscriptstyle R}\omega_{\scriptscriptstyle RS}$ with $\omega_{\scriptscriptstyle RS}=\sqrt{a^2_{\scriptscriptstyle R} + b_{\scriptscriptstyle R}c_{\scriptscriptstyle R}}$, and the periodic annulus are equivalents to one of the figures of Fig. \ref{fig:03}.  
		\item[(b)] If the system $\eqref{eq:01}|_{\epsilon=0}$ is of type CCC, then $J=(0,\infty)$, and the periodic annulus are equivalents to one of the figures of Fig. \ref{fig:04}.
	\end{itemize}
\end{proposition}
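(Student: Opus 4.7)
The plan is to work in the normal form provided by Proposition~\ref{fn:01}. There the central Hamiltonian is $H^{\scriptscriptstyle C}=(x^{2}+y^{2})/2$, so the orbits of $X_{\scriptscriptstyle C}$ are concentric circles about the origin. Moreover $H^{\scriptscriptstyle R}(1,y)=\frac{b_{\scriptscriptstyle R}}{2}y^{2}-\frac{c_{\scriptscriptstyle R}}{2}-\beta_{\scriptscriptstyle R}$ and $H^{\scriptscriptstyle L}(-1,y)=\frac{b_{\scriptscriptstyle L}}{2}y^{2}-\frac{c_{\scriptscriptstyle L}}{2}+\beta_{\scriptscriptstyle L}$ depend on $y$ only through $y^{2}$, which is exactly the purpose of the linear terms $-a_{\scriptscriptstyle R}y$ and $+a_{\scriptscriptstyle L}y$ kept in \eqref{eq:fnh}. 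Consequently the four equations in \eqref{eq:05} are solved immediately for every $h>0$ by
$$A(h)=(1,h),\quad A_{1}(h)=(1,-h),\quad A_{2}(h)=(-1,-h),\quad A_{3}(h)=(-1,h),$$
and the central arcs $L_{h}^{\scriptscriptstyle C}$ and $\bar L_{h}^{\scriptscriptstyle C}$ are the upper and lower halves of the circle $x^{2}+y^{2}=1+h^{2}$ inside $|x|\le 1$; in particular they exist for every $h>0$. So the interval $J$ is controlled entirely by the values of $h$ for which the right arc $L_{h}^{\scriptscriptstyle R}\subset\{x\ge 1\}$ and the left arc $L_{h}^{\scriptscriptstyle L}\subset\{x\le -1\}$ remain simple arcs joining, respectively, $A(h)$ to $A_{1}(h)$ and $A_{2}(h)$ to $A_{3}(h)$.

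For part~(b), all three subsystems are centers, hence $a_{i}^{2}+b_{i}c_{i}<0$ and the Hessian of $H^{i}$ is definite for $i=L,R$; the level sets of $H^{\scriptscriptstyle L}$ and $H^{\scriptscriptstyle R}$ are therefore bounded ellipses. The ellipse through $A(h)$ meets $x=1$ only at $A(h)$ and $A_{1}(h)$, so the right arc is a simple sub-arc for every $h>0$; by the same reasoning on the left, $J=(0,\infty)$. The finitely many real-or-virtual positions of the two outer centers then produce the list of topological types in Fig.~\ref{fig:04}.

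For part~(a), at least one outer subsystem is a saddle; suppose the right one is, so $\omega_{\scriptscriptstyle RS}^{2}=a_{\scriptscriptstyle R}^{2}+b_{\scriptscriptstyle R}c_{\scriptscriptstyle R}>0$ and the level sets of $H^{\scriptscriptstyle R}$ are hyperbolas. The arc $L_{h}^{\scriptscriptstyle R}$ persists exactly as long as $A(h)$ and $A_{1}(h)$ lie on the same branch, i.e.\ until the level reaches the saddle value. Solving $\nabla H^{\scriptscriptstyle R}=0$ gives the saddle $(x_{0},y_{0})$ with $x_{0}=(a_{\scriptscriptstyle R}^{2}-b_{\scriptscriptstyle R}\beta_{\scriptscriptstyle R})/\omega_{\scriptscriptstyle RS}^{2}$ and $b_{\scriptscriptstyle R}y_{0}=a_{\scriptscriptstyle R}(1-x_{0})$. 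After translating to the saddle the critical level set is $b_{\scriptscriptstyle R}\eta^{2}+2a_{\scriptscriptstyle R}\xi\eta-c_{\scriptscriptstyle R}\xi^{2}=0$ in $(\xi,\eta)=(x-x_{0},y-y_{0})$, whose two lines $\eta=((-a_{\scriptscriptstyle R}\pm\omega_{\scriptscriptstyle RS})/b_{\scriptscriptstyle R})\xi$ hit $x=1$ at $y=y_{0}\pm(1-x_{0})\omega_{\scriptscriptstyle RS}/b_{\scriptscriptstyle R}$. Using $y_{0}=a_{\scriptscriptstyle R}(1-x_{0})/b_{\scriptscriptstyle R}$ and $1-x_{0}=b_{\scriptscriptstyle R}(c_{\scriptscriptstyle R}+\beta_{\scriptscriptstyle R})/\omega_{\scriptscriptstyle RS}^{2}$, and selecting the sign compatible with the clockwise orientation in~(H3), gives
$$\tau=\frac{a_{\scriptscriptstyle R}^{2}-b_{\scriptscriptstyle R}\beta_{\scriptscriptstyle R}-\omega_{\scriptscriptstyle RS}^{2}}{b_{\scriptscriptstyle R}\omega_{\scriptscriptstyle RS}}.$$
In the SCS case an analogous computation on the left produces a critical $\tau_{\scriptscriptstyle L}$; since (H3) forces the annulus to be a single family of crossing orbits visiting all three zones, after possibly swapping left and right one has $\tau_{\scriptscriptstyle R}\le\tau_{\scriptscriptstyle L}$ and the binding endpoint of $J$ is $\tau_{\scriptscriptstyle R}$. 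The remaining configurations—real vs.\ virtual positions of the outer saddle(s) and the possible outer center in CCS—produce the list of topological types in Fig.~\ref{fig:03}.

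The main obstacle is the careful branch analysis of the right hyperbola in case~(a) together with the reduction to $\tau=\tau_{\scriptscriptstyle R}$ in the SCS case, i.e.\ excluding the possibility that the annulus is cut off by the left separatrix first; once this is handled, the algebraic derivation of the closed-form expression for $\tau$ from the saddle coordinates is routine, and the combinatorial enumeration of the figures is a standard case analysis on the real/virtual status of each outer equilibrium.
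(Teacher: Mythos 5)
Your proposal is correct and follows essentially the same route as the paper: use the normal form to see that $H^{\scriptscriptstyle R}(1,\cdot)$, $H^{\scriptscriptstyle C}(\pm1,\cdot)$ and $H^{\scriptscriptstyle L}(-1,\cdot)$ are even in $y$, forcing $A_1=(1,-h)$, $A_2=(-1,-h)$, $A_3=(-1,h)$, and then bound $J$ by the saddle separatrices (equivalently, the asymptotes of the critical hyperbola) in case (a) and by the boundedness of the elliptic level sets in case (b), taking the minimum of the left and right critical values and normalizing by a reflection. Your computation of $\tau$ via the saddle coordinates agrees with the paper's direct computation of the separatrix intersection points $P_{\scriptscriptstyle R}^{s}$, $P_{\scriptscriptstyle L}^{u}$.
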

\begin{proof}
	Suppose that the system $\eqref{eq:01}|_{\epsilon=0}$ is of type SCS or CCS. Note  that if the saddles are virtual or if they are under the straight lines $x=\pm 1$, then we have not periodic orbits passing through the three zones.
	Denote by $W^u_{\scriptscriptstyle R}$ and $W^s_{\scriptscriptstyle R}$ (resp. $W^u_{\scriptscriptstyle L}$ and $W^s_{\scriptscriptstyle L}$) the unstable and stable separatrices of the saddles of the right (resp. left) subsystems from $\eqref{eq:01}|_{\epsilon=0}$, respectively. Denote by $P_{\scriptscriptstyle L}^{i}=W^i_{\scriptscriptstyle L}\cap \Sigma_{\scriptscriptstyle L}$ and $P_{\scriptscriptstyle R}^{i}=W^i_{\scriptscriptstyle R}\cap \Sigma_{\scriptscriptstyle R}$, for $i=u,s$. After some computate, is possible to show that
	$$P_{\scriptscriptstyle L}^{u}=\Bigg(-1,\frac{a_{\scriptscriptstyle L}^2+b_{\scriptscriptstyle L} \beta_{\scriptscriptstyle L}-\omega_{\scriptscriptstyle LS}^2}{b_{\scriptscriptstyle L}\omega_{\scriptscriptstyle LS}}\Bigg),\quad P_{\scriptscriptstyle L}^{s}=\Bigg(-1,-\frac{a_{\scriptscriptstyle L}^2+b_{\scriptscriptstyle L} \beta_{\scriptscriptstyle L}-\omega_{\scriptscriptstyle LS}^2}{b_{\scriptscriptstyle L}\omega_{\scriptscriptstyle LS}}\Bigg),$$
	$$P_{\scriptscriptstyle R}^{u}=\Bigg(1,-\frac{a_{\scriptscriptstyle R}^2-b_{\scriptscriptstyle R} \beta_{\scriptscriptstyle R}-\omega_{\scriptscriptstyle RS}^2}{b_{\scriptscriptstyle R}\omega_{\scriptscriptstyle RS}}\Bigg)\quad\text{and}\quad
	P_{\scriptscriptstyle R}^{s}=\Bigg(1,\frac{a_{\scriptscriptstyle R}^2-b_{\scriptscriptstyle R} \beta_{\scriptscriptstyle R}-\omega_{\scriptscriptstyle RS}^2}{b_{\scriptscriptstyle R}\omega_{\scriptscriptstyle RS}}\Bigg),$$
	where $\omega_{\scriptscriptstyle LS}=\sqrt{a^2_{\scriptscriptstyle L} + b_{\scriptscriptstyle L}c_{\scriptscriptstyle L}}$ and $\omega_{\scriptscriptstyle RS}=\sqrt{a^2_{\scriptscriptstyle R} + b_{\scriptscriptstyle R}c_{\scriptscriptstyle R}}$. Note that we have a symmetry between the points $P_{\scriptscriptstyle L}^{u}$ and $P_{\scriptscriptstyle L}^{s}$ (resp. $P_{\scriptscriptstyle R}^{u}$ and $P_{\scriptscriptstyle R}^{s}$) with respect to $x$-axis.  
	Define by $\tau$ the smallest ordinate value between the points $P_{\scriptscriptstyle R}^{s}$ and $P_{\scriptscriptstyle L}^{u}$, i.e. $\tau= \min\{(a_{\scriptscriptstyle R}^2-b_{\scriptscriptstyle R}\beta_{\scriptscriptstyle R}-\omega_{\scriptscriptstyle RS}^2)/b_{\scriptscriptstyle R}\omega_{\scriptscriptstyle RS},(a_{\scriptscriptstyle L}^2+b_{\scriptscriptstyle L}\beta_{\scriptscriptstyle L}-\omega_{\scriptscriptstyle LS}^2)/b_{\scriptscriptstyle L}\omega_{\scriptscriptstyle LS}\}$.
	Then, less than one reflection around the $y$-axis, we can assuming that $\tau=(a_{\scriptscriptstyle R}^2-b_{\scriptscriptstyle R}\beta_{\scriptscriptstyle R}-\omega_{\scriptscriptstyle RS}^2)/b_{\scriptscriptstyle R}\omega_{\scriptscriptstyle RS}$.
	
	As the vector field $X_{\scriptscriptstyle C}$ associated with the central subsystem from $\eqref{eq:01}|_{\epsilon=0}$ is $X_{\scriptscriptstyle C}(x,y)=(y,-x)$, if system $\eqref{eq:01}|_{\epsilon=0}$ is of type SCS and the ordinates of the points $P_{\scriptscriptstyle R}^{s}$ and $P_{\scriptscriptstyle L}^{u}$ are distinct (see Fig. \ref{fig:03} (a)) or if system $\eqref{eq:01}|_{\epsilon=0}$ is of type CCS (see Fig. \ref{fig:03} (c) or (d)), then we have a homoclinic loop passing through the points $P_{\scriptscriptstyle R}^{s}$ and $P_{\scriptscriptstyle R}^{u}$. Otherwise, if system $\eqref{eq:01}|_{\epsilon=0}$ is of type SCS and the ordinates of points $P_{\scriptscriptstyle R}^{s}$ and $P_{\scriptscriptstyle L}^{u}$ are the same (see Fig. \ref{fig:03} (b)) then we have a hetoclinic orbit passing through the points $P_{\scriptscriptstyle R}^{s}$, $P_{\scriptscriptstyle R}^{u}$, $P_{\scriptscriptstyle L}^{s}$ and $P_{\scriptscriptstyle L}^{u}$ .
	Moreover, the central subsystem from $\eqref{eq:01}|_{\epsilon=0}$ has a periodic orbit tangent to straight lines $x=\pm 1$ in the points $P_{\scriptscriptstyle R}=(1,0)$ and $P_{\scriptscriptstyle L}=(-1,0)$. 
	The Fig. \ref{fig:03}  shows the possibles phase portraits of the system $\eqref{eq:01}|_{\epsilon=0}$ of type SCS and CCS.
	
	\begin{figure}[h]
		\begin{center}		
			\begin{overpic}[width=4.8in]{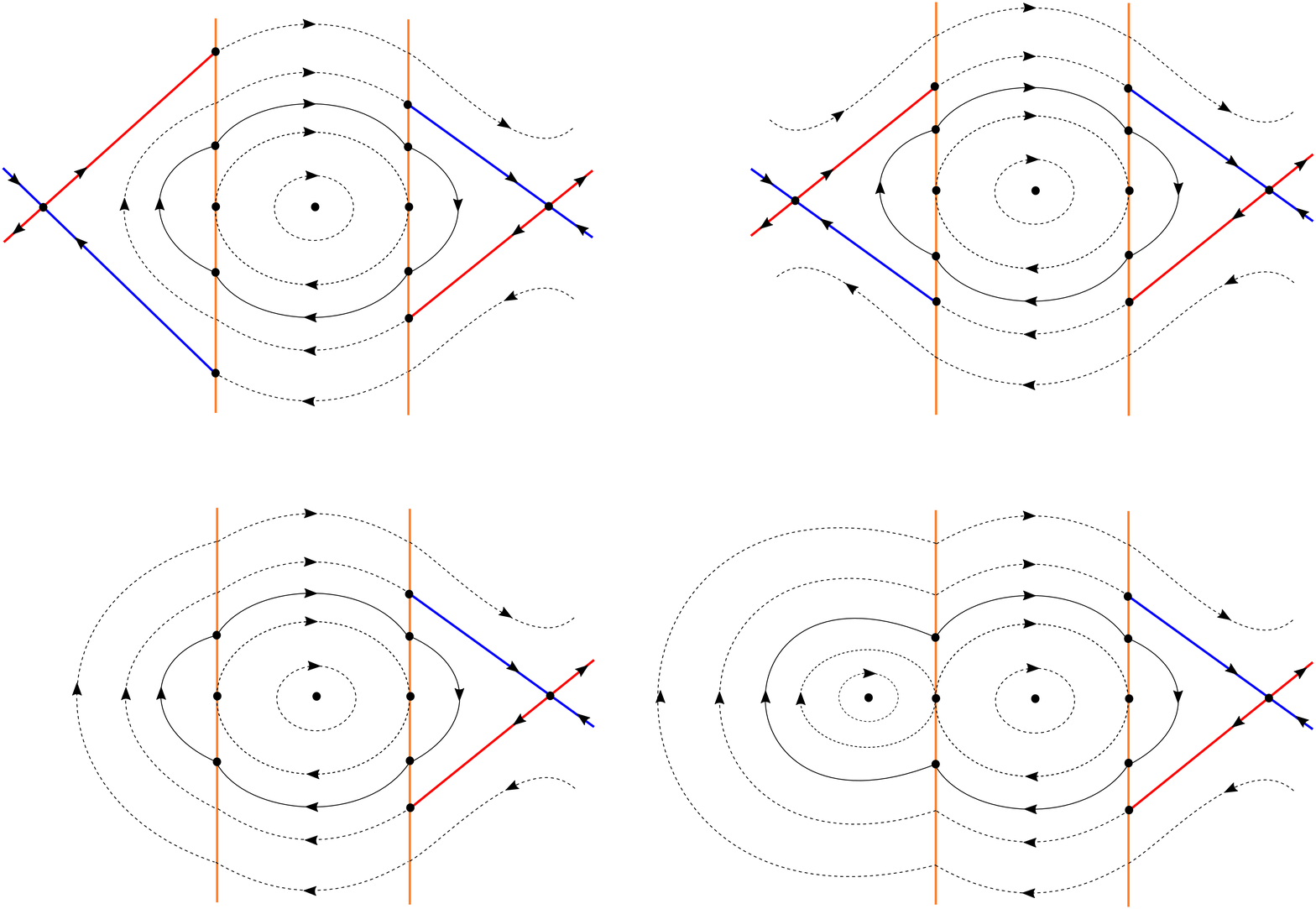}
				\put(30,36) {$\scriptstyle x=1$}
				\put(14,36) {$\scriptstyle x=-1$}
				\put(31.5,61.5) {$\scriptstyle P_{\scriptscriptstyle R}^{s}$}	
				\put(31.5,43.5) {$\scriptstyle P_{\scriptscriptstyle R}^{u}$}
				\put(13,66) {$\scriptstyle P_{\scriptscriptstyle L}^{u}$}	
				\put(13,39) {$\scriptstyle P_{\scriptscriptstyle L}^{s}$}				
				\put(31.5,58) {$\scriptstyle A$}
				\put(31.3,47) {$\scriptstyle A_1$}
				\put(13.5,47) {$\scriptstyle A_2$}
				\put(13.5,58.5) {$\scriptstyle A_3$}
				\put(31.5,52.5) {$\scriptstyle P_{\scriptscriptstyle R}$}
				\put(13,52.5) {$\scriptstyle P_{\scriptscriptstyle L}$}
				\put(22,34) {$(a)$}

				\put(84.5,36) {$\scriptstyle x=1$}
				\put(69,36) {$\scriptstyle x=-1$}
				\put(86.5,62.5) {$\scriptstyle P_{\scriptscriptstyle R}^{s}$}	
				\put(86.5,45) {$\scriptstyle P_{\scriptscriptstyle R}^{u}$}
				\put(68,63) {$\scriptstyle P_{\scriptscriptstyle L}^{u}$}	
				\put(68,44.5) {$\scriptstyle P_{\scriptscriptstyle L}^{s}$}				
				\put(86.5,59) {$\scriptstyle A$}
				\put(86.5,48.5) {$\scriptstyle A_1$}
				\put(68,48.5) {$\scriptstyle A_2$}
				\put(68,59.5) {$\scriptstyle A_3$}
				\put(86.5,54) {$\scriptstyle P_{\scriptscriptstyle R}$}
				\put(68,54) {$\scriptstyle P_{\scriptscriptstyle L}$}
				\put(77,34) {$(b)$}
				
				\put(30,-2) {$\scriptstyle x=1$}
				\put(14,-2) {$\scriptstyle x=-1$}
				\put(31.5,24) {$\scriptstyle P_{\scriptscriptstyle R}^{s}$}	
				\put(31.5,6.2) {$\scriptstyle P_{\scriptscriptstyle R}^{u}$}			
				\put(31.5,20.5) {$\scriptstyle A$}
				\put(31.3,10) {$\scriptstyle A_1$}
				\put(13.5,9.5) {$\scriptstyle A_2$}
				\put(13.4,21) {$\scriptstyle A_3$}
				\put(31.5,15.5) {$\scriptstyle P_{\scriptscriptstyle R}$}
				\put(13,15.5) {$\scriptstyle P_{\scriptscriptstyle L}$}
				\put(22,-4) {$(c)$}	
				
				\put(84.5,-2) {$\scriptstyle x=1$}
				\put(69,-2) {$\scriptstyle x=-1$}
				\put(86.5,24) {$\scriptstyle P_{\scriptscriptstyle R}^{s}$}	
				\put(86.5,6.2) {$\scriptstyle P_{\scriptscriptstyle R}^{u}$}			
				\put(86.5,20.5) {$\scriptstyle A$}
				\put(86.5,10) {$\scriptstyle A_1$}
				\put(68,8.5) {$\scriptstyle A_2$}
				\put(68,21.7) {$\scriptstyle A_3$}
				\put(86.4,15.5) {$\scriptstyle P_{\scriptscriptstyle R}$}
				\put(68,15.5) {$\scriptstyle P_{\scriptscriptstyle L}$}	
				\put(77,-4) {$(d)$}	
			\end{overpic}
		\end{center}
		\vspace{0.7cm}
		\caption{Phase portrait of system $\eqref{eq:01}|_{\epsilon=0}$ of type : (a) SCS with the ordinates of points $P_{\scriptscriptstyle R}^{s}$ and $P_{\scriptscriptstyle L}^{u}$ distinct; (b) SCS with the ordinates of points $P_{\scriptscriptstyle R}^{s}$ and $P_{\scriptscriptstyle L}^{u}$ equal; (c) CCS when left subsystem has a virtual center; (d) CCS when left subsystem has a real center.}\label{fig:03} 
	\end{figure} 
	Consider a initial point of form $A(h)=(1,h)$, with $h\in (0,\tau)$. By the hypothesis (H3), the system $\eqref{eq:01}|_{\epsilon=0}$ has a family of crossing periodic orbits that intersects the straight lines $x=\pm1$ at four points, $A(h)$, $A_1(h)=(1,a_1(h))$, with $a_1(h)<h$, and $A_2(h)=(-1,a_2(h))$, $A_3(h)=(-1,a_3(h))$, with $a_2(h)<a_3(h)$ satisfying
	\begin{equation*}
		\begin{aligned}
			& H^{\scriptscriptstyle R}(A(h))=H^{\scriptscriptstyle R}(A_1(h)), \\
			& H^{\scriptscriptstyle C}(A_1(h))=H^{\scriptscriptstyle C}(A_2(h)), \\
			& H^{\scriptscriptstyle L}(A_2(h))=H^{\scriptscriptstyle L}(A_3(h)), \\
			& H^{\scriptscriptstyle C}(A_3(h))=H^{\scriptscriptstyle C}(A(h)), 
		\end{aligned}
	\end{equation*}
	where $H^{\scriptscriptstyle R}$,  $H^{\scriptscriptstyle C}$ and  $H^{\scriptscriptstyle L}$ are given by \eqref{eq:fnh}. More precisely, we have the equations
	\begin{equation*}
		\begin{aligned}
			& \frac{b_{\scriptscriptstyle R}}{2}(h-a_1(h))(h+a_1(h))=0, \\
			& \frac{1}{2}(a_1(h)-a_2(h))(a_1(h)+a_2(h))=0, \\
			& \frac{b_{\scriptscriptstyle L}}{2}(a_2(h)-a_3(h))(a_2(h)+a_3(h))=0, \\
			& \frac{1}{2}(a_3(h)-h)(a_3(h)+h)=0. 
		\end{aligned}
	\end{equation*}
	As $a_1(h)<h$, $a_2(h)<a_3(h)$, $b_{\scriptscriptstyle R}>0$ and $b_{\scriptscriptstyle L}>0$, the only solution of system above is $a_1(h)=-h$, $a_2(h)=-h$ and $a_3(h)=h$, i.e. we have the four points given by $A(h)=(1,h)$, $A_1(h)=(1,-h)$, $A_2(h)=(-1,-h)$ and $A_3(h)=(-1,h)$. Moreover, system  $\eqref{eq:01}|_{\epsilon=0}$ has a periodic orbit $L_h$ passing through these points, for all $h\in(0,\tau)$. If $h\in[\tau,\infty)$ then the orbit of the system $\eqref{eq:01}|_{\epsilon=0}$ with initial condition in $A(h)$ do not return to straight line $x=1$ to positive times, i.e. the system $\eqref{eq:01}|_{\epsilon=0}$ has no periodic orbit passing thought the point $A(h)$. Therefore, if $h\in(0,\tau)$ the system $\eqref{eq:01}|_{\epsilon=0}$ has a periodic annulus, formed by the periodic orbits $L_h$, limited by one (see Fig. \ref{fig:03} (a)--(c)) or two (see Fig. \ref{fig:03} (d)) periodic orbits tangent to the straight lines $x=\pm1$,  when $h=0$, and a homoclinic loop (see Fig. \ref{fig:03} (a), (c) and (d)) or heteroclinic orbit (see Fig. \ref{fig:03} (b)), when $h=\tau$. Therefore, item (a) is proven.
	
	
	To prove item (b), suppose that the system $\eqref{eq:01}|_{\epsilon=0}$ is of type CCC. The central subsystem from $\eqref{eq:01}|_{\epsilon=0}$ has a periodic orbit tangent to straight lines $x=\pm 1$ in the points $P_{\scriptscriptstyle R}=(1,0)$ and $P_{\scriptscriptstyle L}=(-1,0)$. Moreover, as in the previous case, for each $h\in(0,\infty)$ we have a periodic orbit $L_h$ passing through points $A(h)=(1,h)$, $A_1(h)=(1,-h)$, $A_2(h)=(-1,-h)$ and $A_3(h)=(-1,h)$. Therefore, the system $\eqref{eq:01}|_{\epsilon=0}$ has a continuum of periodic orbit formed by the periodic orbits $L_h$, with $h\in(0,\infty)$, and limited by one (see Fig. \ref{fig:04} (a)), two (see Fig. \ref{fig:04} (a)) or three (see Fig. \ref{fig:04} (c)) periodic orbits tangent to straight lines $x=\pm1$,  when $h=0$. The Fig. \ref{fig:04}  shows the possibles phase portraits of the system $\eqref{eq:01}|_{\epsilon=0}$ of type CCC.
	\begin{figure}[h]
		\begin{center}		
			\begin{overpic}[width=4.8in]{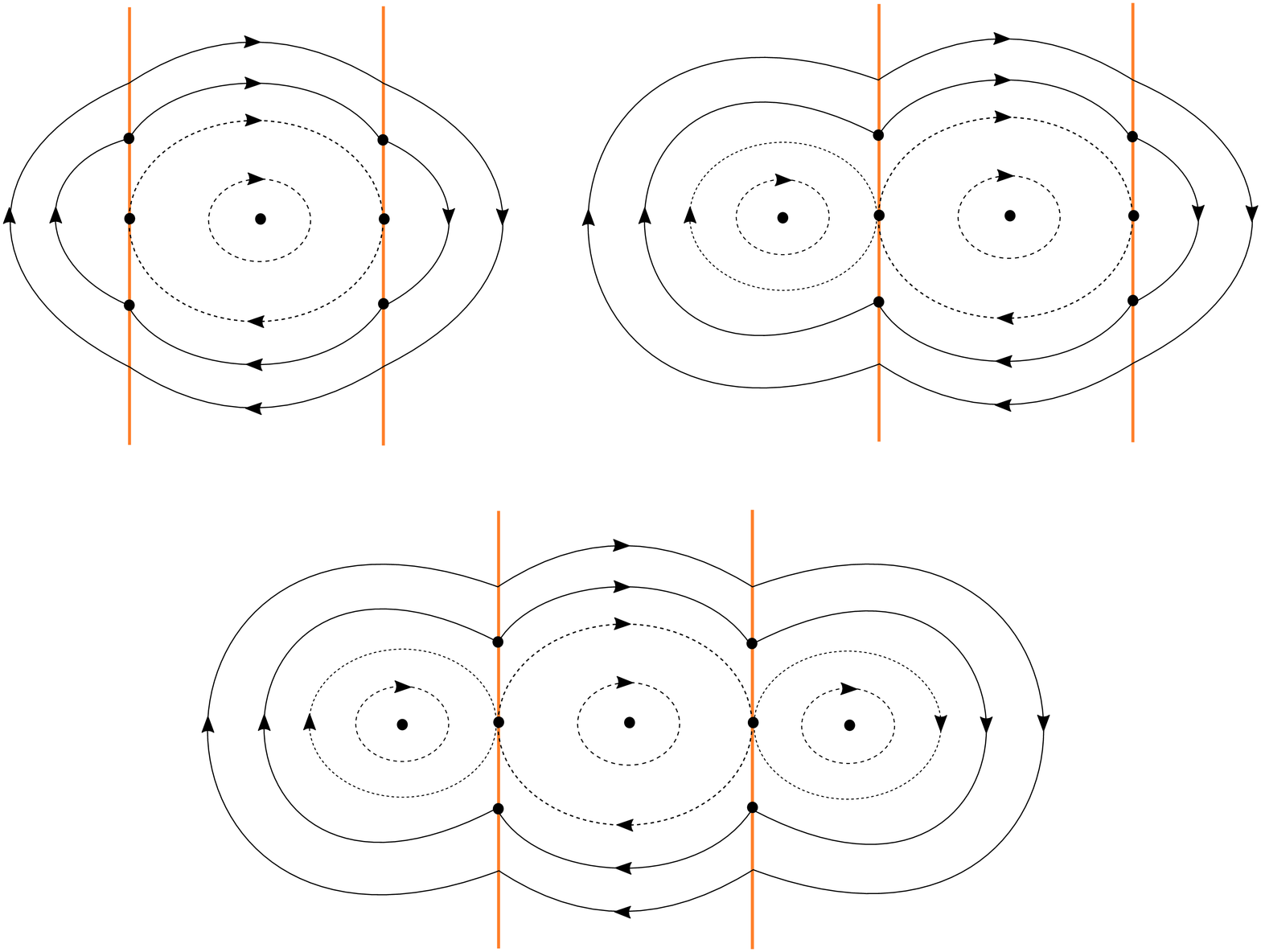}
				\put(28,38) {$ x=1$}
				\put(7,38) {$ x=-1$}			
				\put(31,65) {$ A$}
				\put(31,50) {$ A_1$}
				\put(6,50) {$ A_2$}
				\put(6,65) {$ A_3$}
				\put(31,58) {$ P_{\scriptscriptstyle R}$}
				\put(6,58) {$ P_{\scriptscriptstyle L}$}
				\put(19,37) {$(a)$}
				
				\put(87,38) {$ x=1$}
				\put(67,38) {$ x=-1$}			
				\put(91,65) {$ A$}
				\put(91,50) {$ A_1$}
				\put(66,48.5) {$ A_2$}
				\put(66,66.5) {$ A_3$}
				\put(91,58) {$ P_{\scriptscriptstyle R}$}
				\put(66,58) {$ P_{\scriptscriptstyle L}$}
				\put(79,37) {$(b)$}
				
				\put(57,-2) {$ x=1$}
				\put(36.5,-2) {$ x=-1$}			
				\put(60,25.7) {$ A$}
				\put(60,8) {$ A_1$}
				\put(35.5,8) {$ A_2$}
				\put(35.5,26.3) {$ A_3$}
				\put(60.5,18) {$ P_{\scriptscriptstyle R}$}
				\put(35.5,18) {$ P_{\scriptscriptstyle L}$}
				\put(48,-3) {$(c)$}
			\end{overpic}
		\end{center}
		\vspace{0.7cm}
		\caption{Phase portrait of system $\eqref{eq:01}|_{\epsilon=0}$ of type CCC when:  (a) the left and right subsystems have virtual centers; (b) the left subsystem has a real center and right subsystem has a virtual center; (c) the left and right subsystems have real centers.}\label{fig:04}
	\end{figure} 
\end{proof}

\bigskip

The coefficients that multiply the integrals of first order Melnikov function \eqref{eq:mel} associated to system \eqref{eq:01} can be easily calculated. More precisely, we have the immediate corollary.  

\begin{corollary}
	Let $J$ be the interval of definition of Melnikov function \eqref{eq:mel}. For $h\in J$,
	$$
	\frac{H_y^{\scriptscriptstyle R}(A)}{H_y^{\scriptscriptstyle C}(A)}=b_{\scriptscriptstyle R},\quad \frac{H_y^{\scriptscriptstyle R}(A)H_y^{\scriptscriptstyle C}(A_3)}{H_y^{\scriptscriptstyle C}(A)H_y^{\scriptscriptstyle L}(A_3)}=\frac{b_{\scriptscriptstyle R}}{b_{\scriptscriptstyle L}},\quad
	\frac{H_y^{\scriptscriptstyle R}(A)H_y^{\scriptscriptstyle C}(A_3)H_y^{\scriptscriptstyle L}(A_2)}{H_y^{\scriptscriptstyle C}(A)H_y^{\scriptscriptstyle L}(A_3)H_y^{\scriptscriptstyle C}(A_2)}=b_{\scriptscriptstyle R}$$
	and
	$$\frac{H_y^{\scriptscriptstyle R}(A)H_y^{\scriptscriptstyle C}(A_3)H_y^{\scriptscriptstyle L}(A_2)H_y^{\scriptscriptstyle C}(A_1)}{H_y^{\scriptscriptstyle C}(A)H_y^{\scriptscriptstyle L}(A_3)H_y^{\scriptscriptstyle C}(A_2)H_y^{\scriptscriptstyle R}(A_1)}=1.
	$$
	Then, the first order Melnikov function associated to system \eqref{eq:01} can be written as
	\begin{equation}\label{eq:mel01}
		\begin{aligned}
		M(h) = &\,\, b_{\scriptscriptstyle R}\int_{\widehat{A_3A}}g_{\scriptscriptstyle C}dx-f_{\scriptscriptstyle C}dy+\frac{b_{\scriptscriptstyle R}}{b_{\scriptscriptstyle L}} \int_{\widehat{A_2A_3}}g_{\scriptscriptstyle L}dx-f_{\scriptscriptstyle L}dy \\
		      &\, + b_{\scriptscriptstyle R}\int_{\widehat{A_1A_2}}g_{\scriptscriptstyle C}dx-f_{\scriptscriptstyle C}dy+\int_{\widehat{AA_1}}g_{\scriptscriptstyle R}dx-f_{\scriptscriptstyle R}dy.
		\end{aligned}
	\end{equation}
\end{corollary}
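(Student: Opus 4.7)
The plan is to verify the four coefficient identities by direct substitution, exploiting the fact that the normal form from Proposition \ref{fn:01} together with the previous proposition gives an extremely simple description of the intersection points on the switching lines.

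First I would read off the partial derivatives from \eqref{eq:fnh}, obtaining
$$H^{\scriptscriptstyle C}_y(x,y)=y,\qquad H^{\scriptscriptstyle L}_y(x,y)=b_{\scriptscriptstyle L}\,y+a_{\scriptscriptstyle L}(x+1),\qquad H^{\scriptscriptstyle R}_y(x,y)=b_{\scriptscriptstyle R}\,y+a_{\scriptscriptstyle R}(x-1).$$
The key structural observation is that the $a_{\scriptscriptstyle R}$-term in $H^{\scriptscriptstyle R}_y$ vanishes identically on $\Sigma_{\scriptscriptstyle R}=\{x=1\}$ and the $a_{\scriptscriptstyle L}$-term in $H^{\scriptscriptstyle L}_y$ vanishes identically on $\Sigma_{\scriptscriptstyle L}=\{x=-1\}$; this is exactly the tangent-point condition $P_1=P_2$, $P_3=P_4$ that was forced in the proof of Proposition \ref{fn:01}. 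By the preceding proposition, the four points in play are $A=(1,h)$, $A_1=(1,-h)$, $A_2=(-1,-h)$, $A_3=(-1,h)$, so the derivatives evaluate to the very clean values
$$H^{\scriptscriptstyle R}_y(A)=b_{\scriptscriptstyle R} h,\quad H^{\scriptscriptstyle R}_y(A_1)=-b_{\scriptscriptstyle R} h,\quad H^{\scriptscriptstyle L}_y(A_2)=-b_{\scriptscriptstyle L} h,\quad H^{\scriptscriptstyle L}_y(A_3)=b_{\scriptscriptstyle L} h,$$
$$H^{\scriptscriptstyle C}_y(A)=h,\quad H^{\scriptscriptstyle C}_y(A_1)=-h,\quad H^{\scriptscriptstyle C}_y(A_2)=-h,\quad H^{\scriptscriptstyle C}_y(A_3)=h.$$

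Second, I would substitute these into the four coefficient ratios appearing in \eqref{eq:mel}. In each ratio the factors of $h$ telescope and the signs pair off cleanly, yielding $b_{\scriptscriptstyle R}$, $b_{\scriptscriptstyle R}/b_{\scriptscriptstyle L}$, $b_{\scriptscriptstyle R}$, and $1$ respectively; inserting these constants into \eqref{eq:mel} produces \eqref{eq:mel01}.

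There is no genuine obstacle; the corollary is essentially a bookkeeping consequence of the normal form. The only thing one must notice is why the computation is so clean, namely that the coordinates of $A,A_1,A_2,A_3$ are exactly $(\pm 1,\pm h)$ and that the normal form was chosen to place the common tangent points of the central and lateral subsystems on the $x$-axis, so that the $x$-dependent contributions to $H^{\scriptscriptstyle L}_y$ and $H^{\scriptscriptstyle R}_y$ vanish on the switching lines. The payoff is that the Melnikov function, which a priori has four $h$-dependent coefficients, now has constant coefficients, a simplification that will be essential for the zero-counting arguments in Corollaries \ref{scs-a}--\ref{ccc-c}.
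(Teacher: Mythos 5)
Your computation is correct and is exactly the direct substitution the paper has in mind: the paper states this corollary as "immediate" after the normal form and the identification $A=(1,h)$, $A_1=(1,-h)$, $A_2=(-1,-h)$, $A_3=(-1,h)$, omitting the calculation entirely. Your observation that $H^{\scriptscriptstyle R}_y$ and $H^{\scriptscriptstyle L}_y$ reduce to $b_{\scriptscriptstyle R}y$ and $b_{\scriptscriptstyle L}y$ on the respective switching lines (because the normal form forces the tangent points to coincide) is precisely why the ratios collapse to constants, and the division by $h$ is legitimate since $J\subset(0,\infty)$.
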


\bigskip

In what follows, we will determinate the first order Melnikov function associated to system \eqref{eq:01} when the system $\eqref{eq:01}|_{\epsilon=0}$ is of the type SCS, CCS and CCC. For this, we define the functions:

\begin{equation}\label{eq:func}
	\begin{aligned}
		f_0(h) = \hspace{0.1cm} & h, \quad h\in(0,\infty), \\
		f_{\scriptscriptstyle C}^{\scriptscriptstyle C}(h) =\hspace{0.1cm} & (h^2+1)\arccos\bigg(\frac{h^2-1}{h^2+1}\bigg),\quad h\in(0,\infty), \\
		f_{\scriptscriptstyle R}^{\scriptscriptstyle C}(h) =\hspace{0.1cm} & ((a_{\scriptscriptstyle R}^2 - b_{\scriptscriptstyle R} \beta_{\scriptscriptstyle R})^2 + (2 a_{\scriptscriptstyle R}^2 + 
		b_{\scriptscriptstyle R}^2 h^2 - 2 b_{\scriptscriptstyle R} \beta_{\scriptscriptstyle R}) \omega_{\scriptscriptstyle RC}^2 ) F_{\scriptscriptstyle R}^{\scriptscriptstyle C}(h)\\
		&+ \omega_{\scriptscriptstyle RC}^4 F_{\scriptscriptstyle R}^{\scriptscriptstyle C}(h), \quad h\in(0,\infty), \\
		f_{\scriptscriptstyle L}^{\scriptscriptstyle C}(h) =\hspace{0.1cm} &  ((a_{\scriptscriptstyle L}^2 + b_{\scriptscriptstyle L} \beta_{\scriptscriptstyle L})^2 + (2 a_{\scriptscriptstyle L}^2 + 
		b_{\scriptscriptstyle L}^2 h^2 + 2 b_{\scriptscriptstyle L}\beta_{\scriptscriptstyle L}) \omega_{\scriptscriptstyle LC}^2 ) F_{\scriptscriptstyle L}^{\scriptscriptstyle C}(h)\\
		& + \omega_{\scriptscriptstyle LC}^4 F_{\scriptscriptstyle L}^{\scriptscriptstyle C}(h), \quad h\in(0,\infty),\\
		f_{\scriptscriptstyle R}^{\scriptscriptstyle S}(h) =\hspace{0.1cm} & (a_{\scriptscriptstyle R}^2 - b_{\scriptscriptstyle R} \beta_{\scriptscriptstyle R} + 
		b_{\scriptscriptstyle R} \omega_{\scriptscriptstyle RS} h - \omega_{\scriptscriptstyle RS}^2) \\
		& \times(-a_{\scriptscriptstyle R}^2 + 
		b_{\scriptscriptstyle R} \beta_{\scriptscriptstyle R} + b_{\scriptscriptstyle R} \omega_{\scriptscriptstyle RS} h + \omega_{\scriptscriptstyle RS}^2)F_{\scriptscriptstyle R}^{\scriptscriptstyle S}(h), \quad h\in(0,\tau), \\
		f_{\scriptscriptstyle L}^{\scriptscriptstyle S}(h) =\hspace{0.1cm} & (-a_{\scriptscriptstyle L}^2 - b_{\scriptscriptstyle L} \beta_{\scriptscriptstyle L} + 
		b_{\scriptscriptstyle L} \omega_{\scriptscriptstyle LS} h + \omega_{\scriptscriptstyle LS}^2)\\
		& \times (a_{\scriptscriptstyle L}^2 + 
		b_{\scriptscriptstyle L} \beta_{\scriptscriptstyle L} + 	b_{\scriptscriptstyle L} \omega_{\scriptscriptstyle LS} h - \omega_{\scriptscriptstyle LS}^2 ) F_{\scriptscriptstyle L}^{\scriptscriptstyle S}(h), \quad h\in(0,\tau),\\	
	\end{aligned}
\end{equation}	
with
\begin{equation*}
	\begin{aligned}
		F_{\scriptscriptstyle R}^{\scriptscriptstyle C}(h) =& \arccos\bigg(1-\frac{2 b_{\scriptscriptstyle R}^2  \omega_{\scriptscriptstyle RC}^2 h^2}{(a_{\scriptscriptstyle R}^2 - b_{\scriptscriptstyle R} \beta_{\scriptscriptstyle R})^2 + (2 a_{\scriptscriptstyle R}^2 + b_{\scriptscriptstyle R}^2 h^2 - 
			2 b_{\scriptscriptstyle R} \beta_{\scriptscriptstyle R}) \omega_{\scriptscriptstyle RC}^2 + \omega_{\scriptscriptstyle RC}^4}\bigg), \\
		F_{\scriptscriptstyle L}^{\scriptscriptstyle C}(h) =& \arccos\bigg(1-\frac{2 b_{\scriptscriptstyle L}^2 \omega_{\scriptscriptstyle LC}^2  h^2 }{(a_{\scriptscriptstyle L}^2 + b_{\scriptscriptstyle L} \beta_{\scriptscriptstyle L})^2 + (2 a_{\scriptscriptstyle L}^2 + b_{\scriptscriptstyle L}^2 h^2 + 
			2 b_{\scriptscriptstyle L} \beta_{\scriptscriptstyle L}) \omega_{\scriptscriptstyle LC}^2 + \omega_{\scriptscriptstyle LC}^4}\bigg), \\
		F_{\scriptscriptstyle R}^{\scriptscriptstyle S}(h) =& \log\bigg(1-\frac{2 b_{\scriptscriptstyle R} \omega_{\scriptscriptstyle RS} h}{-a_{\scriptscriptstyle R}^2 + b_{\scriptscriptstyle R} \beta_{\scriptscriptstyle R} + b_{\scriptscriptstyle R} \omega_{\scriptscriptstyle RS} h + \omega_{\scriptscriptstyle RS}^2}\bigg), \\
		F_{\scriptscriptstyle L}^{\scriptscriptstyle S}(h) =& \log\bigg(1+\frac{2 b_{\scriptscriptstyle L} \omega_{\scriptscriptstyle LS} h}{a_{\scriptscriptstyle L}^2 + b_{\scriptscriptstyle L} \beta_{\scriptscriptstyle L} - b_{\scriptscriptstyle L} \omega_{\scriptscriptstyle LS} h - \omega_{\scriptscriptstyle LS}^2}\bigg), \\	
	\end{aligned}
\end{equation*}	
where $\omega_{i{\scriptscriptstyle S}}=\sqrt{a^2_i + b_ic_i}$ and $\omega_{i{\scriptscriptstyle C}}=\sqrt{-a^2_i - b_ic_i}$, for $i=L,R$.

\bigskip






\medskip

\begin{theorem}\label{theo:scs}
	Suppose that system $\eqref{eq:01}|_{\epsilon=0}$ is of the type SCS. Then the first order Melnikov function $M(h)$ associated with system \eqref{eq:01} can be expressed as	
	\begin{equation}\label{eq:melscs}
		M(h)=k_0f_0(h)+k_{\scriptscriptstyle C}^{\scriptscriptstyle C}f_{\scriptscriptstyle C}^{\scriptscriptstyle C}(h)+k_{\scriptscriptstyle R}^{\scriptscriptstyle S}f_{\scriptscriptstyle R}^{\scriptscriptstyle S}(h)+k_{\scriptscriptstyle L}^{\scriptscriptstyle S}f_{\scriptscriptstyle L}^{\scriptscriptstyle S}(h),
	\end{equation}
	for $h\in(0,\tau)$, where the functions $f_0,f_{\scriptscriptstyle C}^{\scriptscriptstyle C},f_{\scriptscriptstyle R}^{\scriptscriptstyle S},f_{\scriptscriptstyle L}^{\scriptscriptstyle S}$ are the ones defined in \eqref{eq:func}. Here the coefficients $k_0$ and $k_i^j$, for $i=L,C,R$ and $j=C,S$, depend on the parameters of system \eqref{eq:01}.
\end{theorem}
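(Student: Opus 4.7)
The plan is to compute each of the four line integrals in \eqref{eq:mel01} in closed form and then combine them. By the normal form of Proposition \ref{fn:01} and the preceding analysis in this section, the four endpoints are $A=(1,h)$, $A_1=(1,-h)$, $A_2=(-1,-h)$, $A_3=(-1,h)$; I shall exploit the $y\to -y$ symmetry between the two central arcs (top and bottom) and between the two saddle arcs (left and right).

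For the two central arcs, the orbit lies on the circle $x^2+y^2=1+h^2$, which I parametrize by $x=R\cos\theta$, $y=R\sin\theta$ with $R=\sqrt{1+h^2}$. Since $f_{\scriptscriptstyle C}$ and $g_{\scriptscriptstyle C}$ are affine, the integrand $g_{\scriptscriptstyle C}\,dx-f_{\scriptscriptstyle C}\,dy$ becomes a trigonometric polynomial of degree two whose primitive is elementary. The $\sin^2\theta$ and $\cos^2\theta$ pieces produce a linear-in-$\theta$ term $R^2(\theta_{\mathrm{end}}-\theta_{\mathrm{start}})/2$, and the angular increment along each central arc equals $-\arccos((h^2-1)/(h^2+1))$, so summing the two arcs yields a multiple of $f_{\scriptscriptstyle C}^{\scriptscriptstyle C}(h)$. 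The remaining boundary terms involve $y^2=h^2$, $xy=\pm h$, $x=\pm 1$, and $y=\pm h$ at the endpoints; the $y^2$ pieces vanish on each arc individually, while the constant contributions from $x=\pm 1$ cancel between the top and bottom arcs by symmetry. The net central contribution is therefore $k_{\scriptscriptstyle C}^{\scriptscriptstyle C}f_{\scriptscriptstyle C}^{\scriptscriptstyle C}(h)+c_{\scriptscriptstyle C}\,h$ for explicit coefficients depending on the parameters of $f_{\scriptscriptstyle C},g_{\scriptscriptstyle C}$.

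For the right saddle arc $\widehat{AA_1}$, the orbit lies in the hyperbolic level set $\{H^{\scriptscriptstyle R}=H^{\scriptscriptstyle R}(A)\}$, since $\omega_{\scriptscriptstyle RS}^2=a_{\scriptscriptstyle R}^2+b_{\scriptscriptstyle R}c_{\scriptscriptstyle R}>0$. I would translate the saddle to the origin and apply the linear change of coordinates that brings $H^{\scriptscriptstyle R}$ into canonical form $\omega_{\scriptscriptstyle RS}\,uv$, so that the orbit becomes $uv=u_0v_0=H^{\scriptscriptstyle R}(A)/\omega_{\scriptscriptstyle RS}$ and can be parametrized as $u=u_0 e^{\omega_{\scriptscriptstyle RS} t}$, $v=v_0 e^{-\omega_{\scriptscriptstyle RS} t}$. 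Since $f_{\scriptscriptstyle R}$ and $g_{\scriptscriptstyle R}$ remain affine in $u,v$, the integral reduces to a linear combination of $\int du$, $\int dv$, $\int u\,du$, $\int v\,dv$, $\int u\,dv$, and $\int v\,du$. The first four evaluate to differences of $u$, $v$, $u^2/2$, $v^2/2$ at the endpoints, and since $u,v$ are affine in $h$ at the symmetric points $(1,\pm h)$, each of these differences is linear in $h$. The mixed integrals give $\int u\,dv=-u_0v_0\log(u(A_1)/u(A))$, whose logarithmic factor matches $F_{\scriptscriptstyle R}^{\scriptscriptstyle S}(h)$ after substituting the coordinates explicitly and using $H^{\scriptscriptstyle R}(A)=H^{\scriptscriptstyle R}(A_1)$; the coefficient $u_0v_0\propto h^2-\tau^2$ combines with the coefficients of $f_{\scriptscriptstyle R},g_{\scriptscriptstyle R}$ to produce exactly the algebraic prefactor of $f_{\scriptscriptstyle R}^{\scriptscriptstyle S}(h)$. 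Thus the right saddle arc contributes $k_{\scriptscriptstyle R}^{\scriptscriptstyle S}f_{\scriptscriptstyle R}^{\scriptscriptstyle S}(h)$ plus a linear-in-$h$ remainder, and the entirely parallel computation on the left saddle arc $\widehat{A_2A_3}$ yields $k_{\scriptscriptstyle L}^{\scriptscriptstyle S}f_{\scriptscriptstyle L}^{\scriptscriptstyle S}(h)$ plus a linear-in-$h$ remainder.

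Summing the four contributions, the combined polynomial remainder is linear in $h$, equal to $k_0 f_0(h)$ for an explicit constant $k_0$; together with the three transcendental pieces this yields exactly \eqref{eq:melscs}. The main obstacle is the saddle computation: one must carry through the linear diagonalization of $H^{\scriptscriptstyle R}$, track the affine transformation applied to $f_{\scriptscriptstyle R},g_{\scriptscriptstyle R}$, and match the resulting logarithmic expression, after nontrivial algebraic manipulation using $H^{\scriptscriptstyle R}(A)=H^{\scriptscriptstyle R}(A_1)$, to the specific form $F_{\scriptscriptstyle R}^{\scriptscriptstyle S}(h)$ of \eqref{eq:func}; once this matching is established for the right subsystem, the left subsystem is handled analogously.
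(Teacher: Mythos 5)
Your proposal is correct and follows essentially the same route as the paper: both compute the four line integrals of \eqref{eq:mel01} in closed form by explicitly parametrizing each arc (circular arcs for the central zone, exponential flow for the saddle zones), extract the $\arccos$ and $\log$ terms as $f_{\scriptscriptstyle C}^{\scriptscriptstyle C}$, $f_{\scriptscriptstyle R}^{\scriptscriptstyle S}$, $f_{\scriptscriptstyle L}^{\scriptscriptstyle S}$, observe that the remaining endpoint contributions are linear in $h$ (with the constants $\pm 2v_{\scriptscriptstyle C}$ cancelling between the two central arcs), and sum. The only cosmetic difference is that you diagonalize $H^{\scriptscriptstyle R}$ to the canonical form $\omega_{\scriptscriptstyle RS}uv$ before integrating, whereas the paper integrates the explicit exponential solution directly in the original coordinates; the resulting computation is the same.
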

\begin{proof}
	The orbit $(x_{\scriptscriptstyle R}(x,y),y_{\scriptscriptstyle R}(x,y))$ of the system $\eqref{eq:01}|_{\epsilon=0}$, such that $(x_{\scriptscriptstyle R}(0,0),y_{\scriptscriptstyle R}(0,0))=(1,h)$, is given by
	\begin{equation*}
		\begin{aligned}
			x_{\scriptscriptstyle R}(t)=\,&\frac{e^{-t \omega_{\scriptscriptstyle RS}}}{2\omega_{\scriptscriptstyle RS}^2}(b_{\scriptscriptstyle R}\beta_{\scriptscriptstyle R}-a_{\scriptscriptstyle R}^2+\omega_{\scriptscriptstyle RS}^2-b_{\scriptscriptstyle R}\omega_{\scriptscriptstyle RS}h)+\frac{1}{2\omega_{\scriptscriptstyle RS}^2}(2a_{\scriptscriptstyle R}^2-2b_{\scriptscriptstyle R}\beta_{\scriptscriptstyle R})\\
			& +\frac{e^{t \omega_{\scriptscriptstyle RS}}}{2\omega_{\scriptscriptstyle RS}^2}(b_{\scriptscriptstyle R}\beta_{\scriptscriptstyle R}-a_{\scriptscriptstyle R}^2+b_{\scriptscriptstyle R}\omega_{\scriptscriptstyle RS}h+\omega_{\scriptscriptstyle RS}^2), \\
			y_{\scriptscriptstyle R}(t)=\,&\frac{e^{-t \omega_{\scriptscriptstyle RS}}}{2b_{\scriptscriptstyle R}\omega_{\scriptscriptstyle RS}^2}(a_{\scriptscriptstyle R}^3-a_{\scriptscriptstyle R}b_{\scriptscriptstyle R}\beta_{\scriptscriptstyle R}+a_{\scriptscriptstyle R}^2\omega_{\scriptscriptstyle RS}+a_{\scriptscriptstyle R}b_{\scriptscriptstyle R}\omega_{\scriptscriptstyle RS}h-b_{\scriptscriptstyle R}\beta_{\scriptscriptstyle R}\omega_{\scriptscriptstyle RS}-a_{\scriptscriptstyle R}\omega_{\scriptscriptstyle RS}^2)\\
			& +\frac{e^{-t \omega_{\scriptscriptstyle RS}}}{2b_{\scriptscriptstyle R}\omega_{\scriptscriptstyle RS}^2}(b_{\scriptscriptstyle R}\omega_{\scriptscriptstyle RS}^2h-\omega_{\scriptscriptstyle RS}^3) +\frac{1}{2b_{\scriptscriptstyle R}\omega_{\scriptscriptstyle RS}^2}(-2a_{\scriptscriptstyle R}^3+2a_{\scriptscriptstyle R}b_{\scriptscriptstyle R}\beta_{\scriptscriptstyle R}+2a_{\scriptscriptstyle R}\omega_{\scriptscriptstyle RS}^2)\\
			& +\frac{e^{t \omega_{\scriptscriptstyle RS}}}{2b_{\scriptscriptstyle R}\omega_{\scriptscriptstyle RS}^2}(a_{\scriptscriptstyle R}^3-a_{\scriptscriptstyle R}b_{\scriptscriptstyle R}\beta_{\scriptscriptstyle R}-a_{\scriptscriptstyle R}^2\omega_{\scriptscriptstyle RS}-a_{\scriptscriptstyle R}b_{\scriptscriptstyle R}\omega_{\scriptscriptstyle RS}h+b_{\scriptscriptstyle R}\beta_{\scriptscriptstyle R}\omega_{\scriptscriptstyle RS})\\
			& +\frac{e^{t \omega_{\scriptscriptstyle RS}}}{2b_{\scriptscriptstyle R}\omega_{\scriptscriptstyle RS}^2}(-a_{\scriptscriptstyle R}\omega_{\scriptscriptstyle RS}^2+b_{\scriptscriptstyle R}\omega_{\scriptscriptstyle RS}^2h+\omega_{\scriptscriptstyle RS}^3).
		\end{aligned}
	\end{equation*}
	The fly time of the orbit $(x_{\scriptscriptstyle R}(x,y),y_{\scriptscriptstyle R}(x,y))$, from $A(h)=(1,h)$ to $A_1(h)=(1,-h)$, is 
	$$t_{\scriptscriptstyle R}=\frac{1}{\omega_{\scriptscriptstyle RS}}\log\Bigg(1-\frac{2b_{\scriptscriptstyle R}\omega_{\scriptscriptstyle RS}h}{-a_{\scriptscriptstyle R}^2+b_{\scriptscriptstyle R}\beta_{\scriptscriptstyle R}+b_{\scriptscriptstyle R}\omega_{\scriptscriptstyle RS}h+\omega_{\scriptscriptstyle RS}^2}\Bigg).$$
	Now, for $g_{\scriptscriptstyle R}$ and $f_{\scriptscriptstyle R}$ defined in \eqref{eq:02} and \eqref{eq:03}, respectively, we have
	\begin{equation}\label{sys:r}
		\begin{aligned}
			&\int_{\widehat{AA_1}}g_{\scriptscriptstyle R}dx-f_{\scriptscriptstyle R}dy=\\ &=\int_{0}^{t_{\scriptscriptstyle R}}g_{\scriptscriptstyle R}(x_{\scriptscriptstyle R}(t), y_{\scriptscriptstyle R}(t)) \dfrac{d}{dt}x_{\scriptscriptstyle R}(t) - 
			f_{\scriptscriptstyle R}(x_{\scriptscriptstyle R}(t), y_{\scriptscriptstyle R}(t)) \dfrac{d}{dt}y_{\scriptscriptstyle R}(t) \\
			& = \alpha_1h+\alpha_2f_{\scriptscriptstyle R}^{\scriptscriptstyle S}(h),		
		\end{aligned}
	\end{equation}
	with 
	$$
	\alpha_1 = p_{\scriptscriptstyle R}+2r_{\scriptscriptstyle R}-u_{\scriptscriptstyle R}+\frac{(p_{\scriptscriptstyle R}+u_{\scriptscriptstyle R})(a_{\scriptscriptstyle R}^2-b_{\scriptscriptstyle R}\beta_{\scriptscriptstyle R})}{\omega_{\scriptscriptstyle RS}^2}\quad\text{and}\quad
	\alpha_2  = \frac{p_{\scriptscriptstyle R}+u_{\scriptscriptstyle R}}{2b_{\scriptscriptstyle R}\omega_{\scriptscriptstyle RS}^3}.		
	$$
	The orbit $(x_{\scriptscriptstyle C1}(x,y),y_{\scriptscriptstyle C1}(x,y))$ of the system $\eqref{eq:01}|_{\epsilon=0}$, such that $(x_{\scriptscriptstyle C1}(0,0),$ $y_{\scriptscriptstyle C1}(0,0))=(1,-h)$, is given by
	\begin{equation*}
		\begin{aligned}
			x_{\scriptscriptstyle C1}(t)&=\cos(t) - h \sin(t), \\
			y_{\scriptscriptstyle C1}(t)&=-h \cos(t) - \sin(t).
		\end{aligned}
	\end{equation*}
	The fly time of the orbit $(x_{\scriptscriptstyle C1}(x,y),y_{\scriptscriptstyle C1}(x,y))$, from $A_1(h)=(1,-h)$ to $A_2(h)=(-1,-h)$, is
	$$t_{\scriptscriptstyle C1}=\arccos\Bigg(\frac{h^2-1}{h^2+1}\Bigg)$$
	Now, for $g_{\scriptscriptstyle C}$ and $f_{\scriptscriptstyle C}$ defined in \eqref{eq:02} and \eqref{eq:03}, respectively, we obtain 
	\begin{equation}\label{sys:c1}
		\begin{aligned}
			&\int_{\widehat{A_1A_2}}g_{\scriptscriptstyle C}dx-f_{\scriptscriptstyle C}dy=\\ 
			&=\int_{0}^{t_{\scriptscriptstyle C1}}g_{\scriptscriptstyle C}(x_{\scriptscriptstyle C1}(t), y_{\scriptscriptstyle C1}(t)) \dfrac{d}{dt}x_{\scriptscriptstyle C1}(t) - 
			f_{\scriptscriptstyle C}(x_{\scriptscriptstyle C1}(t), y_{\scriptscriptstyle C1}(t)) \dfrac{d}{dt}y_{\scriptscriptstyle C1}(t) \\
			& = -2v_{\scriptscriptstyle C}+\alpha_3h+\alpha_4f_{\scriptscriptstyle C}^{\scriptscriptstyle C}(h),		
		\end{aligned}
	\end{equation}
	with
	$$
	\alpha_3 = u_{\scriptscriptstyle C}-p_{\scriptscriptstyle C}\quad\text{and}\quad
	\alpha_4 = \frac{p_{\scriptscriptstyle C}+u_{\scriptscriptstyle C}}{2}.		
	$$
	The orbit $(x_{\scriptscriptstyle L}(x,y),y_{\scriptscriptstyle L}(x,y))$ of the system $\eqref{eq:01}|_{\epsilon=0}$, such that $(x_{\scriptscriptstyle L}(0,0),$ $y_{\scriptscriptstyle L}(0,0))=(-1,-h)$, is given by
	\begin{equation*}
		\begin{aligned}
			x_{\scriptscriptstyle L}(t)=&-\frac{e^{-t\omega_{\scriptscriptstyle LS}}}{2\omega_{\scriptscriptstyle LS}^2}(-a_{\scriptscriptstyle L}^2-b_{\scriptscriptstyle L}\beta_{\scriptscriptstyle L}-b_{\scriptscriptstyle L}\omega_{\scriptscriptstyle LS}h+\omega_{\scriptscriptstyle LS}^2)-\frac{1}{2\omega_{\scriptscriptstyle LS}^2}(2a_{\scriptscriptstyle L}^2+2b_{\scriptscriptstyle L}\beta_{\scriptscriptstyle L})\\
			&-\frac{e^{t\omega_{\scriptscriptstyle LS}}}{2\omega_{\scriptscriptstyle LS}^2}(-a_{\scriptscriptstyle L}^2-b_{\scriptscriptstyle L}\beta_{\scriptscriptstyle L}+b_{\scriptscriptstyle L}\omega_{\scriptscriptstyle LS}h+\omega_{\scriptscriptstyle LS}^2), \\
			y_{\scriptscriptstyle L}(t)=&-\frac{e^{-t\omega_{\scriptscriptstyle LS}}}{2b_{\scriptscriptstyle L}\omega_{\scriptscriptstyle LS}^2}(a_{\scriptscriptstyle L}^3+a_{\scriptscriptstyle L}b_{\scriptscriptstyle L}\beta_{\scriptscriptstyle L}+a_{\scriptscriptstyle L}^2\omega_{\scriptscriptstyle LS}+a_{\scriptscriptstyle L}b_{\scriptscriptstyle L}\omega_{\scriptscriptstyle LS}h+b_{\scriptscriptstyle L}\beta_{\scriptscriptstyle L}\omega_{\scriptscriptstyle LS}-a_{\scriptscriptstyle L}\omega_{\scriptscriptstyle LS}^2) \\
			&-\frac{e^{-t\omega_{\scriptscriptstyle LS}}}{2b_{\scriptscriptstyle L}\omega_{\scriptscriptstyle LS}^2}(b_{\scriptscriptstyle L}\omega_{\scriptscriptstyle LS}^2h-\omega_{\scriptscriptstyle LS}^3) -\frac{1}{2b_{\scriptscriptstyle L}\omega_{\scriptscriptstyle LS}^2}(-2a_{\scriptscriptstyle L}^3-2a_{\scriptscriptstyle L}b_{\scriptscriptstyle L}\beta_{\scriptscriptstyle L}+2a_{\scriptscriptstyle L}\omega_{\scriptscriptstyle LS}^2)\\
			& -\frac{e^{t\omega_{\scriptscriptstyle LS}}}{2b_{\scriptscriptstyle L}\omega_{\scriptscriptstyle LS}^2}(a_{\scriptscriptstyle L}^3+a_{\scriptscriptstyle L}b_{\scriptscriptstyle L}\beta_{\scriptscriptstyle L}-a_{\scriptscriptstyle L}^2\omega_{\scriptscriptstyle LS}-a_{\scriptscriptstyle L}b_{\scriptscriptstyle L}\omega_{\scriptscriptstyle LS}h-b_{\scriptscriptstyle L}\beta_{\scriptscriptstyle L}\omega_{\scriptscriptstyle LS}) \\
			&-\frac{e^{t\omega_{\scriptscriptstyle LS}}}{2b_{\scriptscriptstyle L}\omega_{\scriptscriptstyle LS}^2}(-a_{\scriptscriptstyle L}\omega_{\scriptscriptstyle LS}^2+b_{\scriptscriptstyle L}\omega_{\scriptscriptstyle LS}^2h+\omega_{\scriptscriptstyle LS}^3).
		\end{aligned}
	\end{equation*}
	The fly time of the orbit $(x_{\scriptscriptstyle L}(x,y),y_{\scriptscriptstyle L}(x,y))$, from $A_2(h)=(-1,-h)$ to $A_3(h)=(-1,h)$, is 
	$$t_{\scriptscriptstyle L}=\frac{1}{\omega_{\scriptscriptstyle LS}}\log\Bigg(1+\frac{2b_{\scriptscriptstyle L}\omega_{\scriptscriptstyle LS}h}{a_{\scriptscriptstyle L}^2+b_{\scriptscriptstyle L}\beta_{\scriptscriptstyle L}-b_{\scriptscriptstyle L}\omega_{\scriptscriptstyle LS}h-\omega_{\scriptscriptstyle LS}^2}\Bigg).$$
	Now, for $g_{\scriptscriptstyle L}$ and $f_{\scriptscriptstyle L}$ defined in \eqref{eq:02} and \eqref{eq:03}, respectively, we obtain 
	\begin{equation}\label{sys:l}
		\begin{aligned}
			&\int_{\widehat{A_2A_3}}g_{\scriptscriptstyle L}dx-f_{\scriptscriptstyle L}dy=\\ 
			&=\int_{0}^{t_{\scriptscriptstyle L}}g_{\scriptscriptstyle L}(x_{\scriptscriptstyle L}(t), y_{\scriptscriptstyle L}(t)) \dfrac{d}{dt}x_{\scriptscriptstyle L}(t) - 
			f_{\scriptscriptstyle L}(x_{\scriptscriptstyle L}(t), y_{\scriptscriptstyle L}(t)) \dfrac{d}{dt}y_{\scriptscriptstyle L}(t) \\
			& =\alpha_5h+\alpha_6f_{\scriptscriptstyle L}^{\scriptscriptstyle S}(h) ,		
		\end{aligned}
	\end{equation}	
	with
	$$
	\alpha_5 = p_{\scriptscriptstyle L}-2r_{\scriptscriptstyle L}-u_{\scriptscriptstyle L}+\frac{(p_{\scriptscriptstyle L}+u_{\scriptscriptstyle L})(a_{\scriptscriptstyle L}^2+b_{\scriptscriptstyle L}\beta_{\scriptscriptstyle L})}{\omega_{\scriptscriptstyle LS}^2}\quad\text{and}\quad
	\alpha_6  = \frac{p_{\scriptscriptstyle L}+u_{\scriptscriptstyle L}}{2b_{\scriptscriptstyle L}\omega_{\scriptscriptstyle LS}^3}.
	$$
	Finally, the orbit $(x_{\scriptscriptstyle C2}(x,y),y_{\scriptscriptstyle C2}(x,y))$ of the system $\eqref{eq:01}|_{\epsilon=0}$, such that $(x_{\scriptscriptstyle C2}(0,0),y_{\scriptscriptstyle C2}(0,0))=(-1,h)$, is given by
	\begin{equation*}
		\begin{aligned}
			x_{\scriptscriptstyle C2}(t)&=-\cos(t) + h \sin(t), \\
			y_{\scriptscriptstyle C2}(t)&=h \cos(t) + \sin(t).
		\end{aligned}
	\end{equation*}
	The fly time of the orbit $(x_{\scriptscriptstyle C2}(x,y),y_{\scriptscriptstyle C2}(x,y))$, from $A_3(h)=(-1,h)$ to $A(h)=(1,h)$, is
	$$t_{\scriptscriptstyle C2}=\arccos\Bigg(\frac{h^2-1}{h^2+1}\Bigg)$$
	Now, for $g_{\scriptscriptstyle C}$ and $f_{\scriptscriptstyle C}$ defined in \eqref{eq:02} and \eqref{eq:03}, respectively, we obtain 
	\begin{equation}\label{sys:c2}
		\begin{aligned}
			&\int_{\widehat{A_3A}}g_{\scriptscriptstyle C}dx-f_{\scriptscriptstyle C}dy=\\
			 & =\int_{0}^{t_{\scriptscriptstyle C2}}g_{\scriptscriptstyle C}(x_{\scriptscriptstyle C2}(t), y_{\scriptscriptstyle C2}(t)) \dfrac{d}{dt}x_{\scriptscriptstyle C2}(t) - 
			f_{\scriptscriptstyle C}(x_{\scriptscriptstyle C2}(t), y_{\scriptscriptstyle C2}(t)) \dfrac{d}{dt}y_{\scriptscriptstyle C2}(t) \\
			& = 2v_{\scriptscriptstyle C}+\alpha_3h+\alpha_4f_{\scriptscriptstyle C}^{\scriptscriptstyle C}(h).		
		\end{aligned}
	\end{equation}
	Therefore, replacing \eqref{sys:r}, \eqref{sys:c1}, \eqref{sys:l} and \eqref{sys:c2} in \eqref{eq:mel01}, we obtain
	\begin{equation*}
		M(h)=k_0f_0(h)+k_{\scriptscriptstyle C}^{\scriptscriptstyle C}f_{\scriptscriptstyle C}^{\scriptscriptstyle C}(h)+k_{\scriptscriptstyle R}^{\scriptscriptstyle S}f_{\scriptscriptstyle R}^{\scriptscriptstyle S}(h)+k_{\scriptscriptstyle L}^{\scriptscriptstyle S}f_{\scriptscriptstyle L}^{\scriptscriptstyle S}(h)
	\end{equation*} 
	with
	$$k_0=\alpha_1+2b_{\scriptscriptstyle R}\alpha_3+\frac{b_{\scriptscriptstyle R}}{b_{\scriptscriptstyle L}}\alpha_5,\quad k_{\scriptscriptstyle C}^{\scriptscriptstyle C}=2b_{\scriptscriptstyle R}\alpha_4,\quad k_{\scriptscriptstyle R}^{\scriptscriptstyle S}=\alpha_2\quad\text{and}\quad k_{\scriptscriptstyle L}^{\scriptscriptstyle S}=\frac{b_{\scriptscriptstyle R}}{b_{\scriptscriptstyle L}}\alpha_6.$$
\end{proof}

\begin{remark}
	Suppose that the system $\eqref{eq:01}|_{\epsilon=0}$ is of type SCS and that the ordinates of the points $P_{\scriptscriptstyle L}^{\scriptscriptstyle u}$ and $P_{\scriptscriptstyle R}^{\scriptscriptstyle s}$ are equals, see Fig. \ref{fig:03} (b). Then the first order Melnikov function \eqref{eq:melscs}, is given by
	\begin{equation}\label{eq:scs1}
		M(h)=k_0f_0(h)+k_{\scriptscriptstyle C}^{\scriptscriptstyle C}f_{\scriptscriptstyle C}^{\scriptscriptstyle C}(h)+k_{\scriptscriptstyle R}^{\scriptscriptstyle S} f_{\scriptscriptstyle R}^{\scriptscriptstyle S}(h),
	\end{equation}
	with
	\begin{equation*}
	\begin{aligned}
	k_0=\,\,& p_{\scriptscriptstyle R}-u_{\scriptscriptstyle R}+2r_{\scriptscriptstyle R}+2b_{\scriptscriptstyle R}\Big(\frac{p_{\scriptscriptstyle L}-r_{\scriptscriptstyle L}}{b_{\scriptscriptstyle L}}+u_{\scriptscriptstyle C}-p_{\scriptscriptstyle C}\Big)\\
	&+(p_{\scriptscriptstyle R}+u_{\scriptscriptstyle R})\Bigg(\frac{a_{\scriptscriptstyle R}^2-b_{\scriptscriptstyle R}\beta_{\scriptscriptstyle R}}{\omega_{\scriptscriptstyle RS}^2}\Bigg)
	+(p_{\scriptscriptstyle L}+u_{\scriptscriptstyle L})\Bigg(\frac{a_{\scriptscriptstyle R}^2-b_{\scriptscriptstyle R}\beta_{\scriptscriptstyle R}-\omega_{\scriptscriptstyle RS}^2}{\omega_{\scriptscriptstyle LS}\omega_{\scriptscriptstyle RS}}\Bigg),
	\end{aligned}
	\end{equation*}
		
	$$k_{\scriptscriptstyle C}^{\scriptscriptstyle C}=b_{\scriptscriptstyle R}(p_{\scriptscriptstyle C}+u_{\scriptscriptstyle C})\quad\text{and}\quad k_{\scriptscriptstyle R}^{\scriptscriptstyle S}=\frac{\omega_{\scriptscriptstyle LS}(p_{\scriptscriptstyle R}+u_{\scriptscriptstyle R})+\omega_{\scriptscriptstyle RS}(p_{\scriptscriptstyle L}+{u_{\scriptscriptstyle L}})}{2b_{\scriptscriptstyle R}\omega_{\scriptscriptstyle LS}\omega_{\scriptscriptstyle RS}^3},$$
	where the functions $f_0,f_{\scriptscriptstyle C}^{\scriptscriptstyle C},f_{\scriptscriptstyle R}^{\scriptscriptstyle S}$ are the ones defined in \eqref{eq:func}. In fact, if the ordinates of the points $P_{\scriptscriptstyle L}^{\scriptscriptstyle u}$ and $P_{\scriptscriptstyle R}^{\scriptscriptstyle s}$ are equals, then 
	$$\frac{a_{\scriptscriptstyle L}^2+b_{\scriptscriptstyle L} \beta_{\scriptscriptstyle L}-\omega_{\scriptscriptstyle LS}^2}{b_{\scriptscriptstyle L}\omega_{\scriptscriptstyle LS}}=\frac{a_{\scriptscriptstyle R}^2-b_{\scriptscriptstyle R} \beta_{\scriptscriptstyle R}-\omega_{\scriptscriptstyle RS}^2}{b_{\scriptscriptstyle R}\omega_{\scriptscriptstyle RS}}.$$
	Isolating the parameter $\beta_{\scriptscriptstyle L}$ in the equality above, i.e.
	$$\beta_{\scriptscriptstyle L}=\frac{a_{\scriptscriptstyle R}^2 b_{\scriptscriptstyle L}\omega_{\scriptscriptstyle LS}-b_{\scriptscriptstyle L}b_{\scriptscriptstyle R}\beta_{\scriptscriptstyle R}\omega_{\scriptscriptstyle LS}-a_{\scriptscriptstyle L}^2b_{\scriptscriptstyle R}\omega_{\scriptscriptstyle RS}+b_{\scriptscriptstyle R}\omega_{\scriptscriptstyle LS}^2\omega_{\scriptscriptstyle RS}-b_{\scriptscriptstyle L}\omega_{\scriptscriptstyle LS}\omega_{\scriptscriptstyle RS}^2}{b_{\scriptscriptstyle L}b_{\scriptscriptstyle R}\omega_{\scriptscriptstyle RS}},$$
	and replacing on function $M(h)$ given by \eqref{eq:melscs} we obtain the expression \eqref{eq:scs1}.
\end{remark}		

\medskip

The next two theorems provide expressions for the Melnikov function in the cases CCS and CCC. The proof of these results is analogous to proof of Theorem \ref{theo:scs}.

\begin{theorem}\label{theo:ccs}
	Suppose that systems $\eqref{eq:01}|_{\epsilon=0}$ is of the type CCS. Then the first order  Melnikov function $M(h)$ associated to system \eqref{eq:01} can be expressed as	
	\begin{equation}\label{eq:melccs}
		M(h)=k_0f_0(h)+k_{\scriptscriptstyle C}^{\scriptscriptstyle C}f_{\scriptscriptstyle C}^{\scriptscriptstyle C}(h)+k_{\scriptscriptstyle R}^{\scriptscriptstyle C}f_{\scriptscriptstyle R}^{\scriptscriptstyle C}(h)+k_{\scriptscriptstyle L}^{\scriptscriptstyle S}f_{\scriptscriptstyle L}^{\scriptscriptstyle S}(h),
	\end{equation}
	for $h\in(0,\tau)$, where the functions $f_0,f_{\scriptscriptstyle C}^{\scriptscriptstyle C},f_{\scriptscriptstyle R}^{\scriptscriptstyle C},f_{\scriptscriptstyle L}^{\scriptscriptstyle S}$ are the ones defined in \eqref{eq:func}. Here the coefficients $k_0$ and $k_i^j$, for $i=L,C,R$ and $j=C,S$, depend on the parameters of system \eqref{eq:01}.
\end{theorem}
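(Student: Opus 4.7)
The proof would mirror Theorem~\ref{theo:scs}: for each of the four arcs $\widehat{AA_1}$, $\widehat{A_1A_2}$, $\widehat{A_2A_3}$, $\widehat{A_3A}$ comprising $L_h$, I would produce an explicit time parameterization of the unperturbed flow, compute the flight time between the two endpoints on the switching lines $x=\pm 1$, evaluate the line integral $\int g_\ast\,dx - f_\ast\,dy$ as an integral in $t$, and finally combine the four contributions through formula \eqref{eq:mel01}.

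The two central arcs $\widehat{A_1A_2}$ and $\widehat{A_3A}$ are handled identically to the SCS case, since the central subsystem is the same harmonic rotation with normal form $H^{\scriptscriptstyle C}(x,y)=(x^2+y^2)/2$. Their line integrals therefore produce expressions of the shape $\mp 2 v_{\scriptscriptstyle C} + (u_{\scriptscriptstyle C}-p_{\scriptscriptstyle C}) h + \tfrac{1}{2}(p_{\scriptscriptstyle C}+u_{\scriptscriptstyle C})\,f_{\scriptscriptstyle C}^{\scriptscriptstyle C}(h)$ exactly as in \eqref{sys:c1} and \eqref{sys:c2}, so that the two constants $\pm 2 v_{\scriptscriptstyle C}$ cancel when summed.

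The genuine difference from SCS lies in the outer zones. The outer subsystem whose singular point is a saddle is handled by the same hyperbolic parameterization in $e^{\pm\omega_{\ast S} t}$ used in Theorem~\ref{theo:scs}, giving a flight time proportional to the log appearing in $F_\ast^{\scriptscriptstyle S}$ and a contribution of the form ``linear in $h$, plus a scalar multiple of $f_\ast^{\scriptscriptstyle S}(h)$''. For the outer subsystem whose singular point is a center, one replaces exponentials by the trigonometric parameterization with frequency $\omega_{\ast C}=\sqrt{-a_\ast^2-b_\ast c_\ast}$ centered at the (possibly virtual) equilibrium. The return time to the same switching line is then of arccos type, precisely $F_\ast^{\scriptscriptstyle C}$ as defined in \eqref{eq:func}, and integrating $g_\ast\,dx - f_\ast\,dy$ along this trigonometric arc yields a linear combination of $h$ and $f_\ast^{\scriptscriptstyle C}(h)$. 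Substituting the four line integrals into \eqref{eq:mel01} with the coefficients $b_{\scriptscriptstyle R}$, $b_{\scriptscriptstyle R}/b_{\scriptscriptstyle L}$, $b_{\scriptscriptstyle R}$, $1$ provided by the preceding Corollary and collecting terms gives exactly the claimed form \eqref{eq:melccs}.

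The main obstacle is the bookkeeping in the center-type outer zone, which is new relative to Theorem~\ref{theo:scs}. One must parameterize the orbit from an initial point $(\pm 1,\pm h)$ with offset by the equilibrium, solve for the first return time to the same switching line, and then verify that the resulting expression (\emph{a priori} a sum of $h$-polynomials multiplied by $\cos(F_\ast^{\scriptscriptstyle C}(h))$, $\sin(F_\ast^{\scriptscriptstyle C}(h))$ and by $F_\ast^{\scriptscriptstyle C}(h)$ itself) collapses into the clean shape of $f_\ast^{\scriptscriptstyle C}(h)$ in \eqref{eq:func}. The algebraic simplifications mirror those used for the saddle case, exploiting that $\cos(F_\ast^{\scriptscriptstyle C}(h))$ and $\sin(F_\ast^{\scriptscriptstyle C}(h))$ are rational in $h$ once one substitutes the argument of the arccos; the non-transcendental $h$-polynomial part is then absorbed into the $k_0 f_0(h)$ term.
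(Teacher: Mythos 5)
Your proposal is correct and takes essentially the same route as the paper, which in fact gives no separate proof of this theorem and only remarks that it is analogous to Theorem \ref{theo:scs}. Your elaboration of the one genuinely new ingredient --- the trigonometric parameterization in the center-type outer zone, the arccos-type return time, and the observation that $\cos(F_\ast^{\scriptscriptstyle C}(h))$ and $\sin(F_\ast^{\scriptscriptstyle C}(h))$ become rational in $h$ so that only the $h$-linear and $F_\ast^{\scriptscriptstyle C}$ terms survive --- is exactly what that analogy amounts to.
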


\medskip

\begin{theorem}\label{theo:ccc} 
	Suppose the systems $\eqref{eq:01}|_{\epsilon=0}$ is of the type CCC. Then the first order Melnikov function $M(h)$ associated to system \eqref{eq:01} can be expressed as 	
	\begin{equation}\label{eq:melccc}
		M(h)=k_0f_0(h)+k_{\scriptscriptstyle C}^{\scriptscriptstyle C}f_{\scriptscriptstyle C}^{\scriptscriptstyle C}(h)+k_{\scriptscriptstyle R}^{\scriptscriptstyle C}f_{\scriptscriptstyle R}^{\scriptscriptstyle C}(h)+k_{\scriptscriptstyle L}^{\scriptscriptstyle C}f_{\scriptscriptstyle L}^{\scriptscriptstyle C}(h),
	\end{equation}
	for $h\in(0,\infty)$, where the functions $f_0,f_{\scriptscriptstyle C}^{\scriptscriptstyle C},f_{\scriptscriptstyle R}^{\scriptscriptstyle C},f_{\scriptscriptstyle L}^{\scriptscriptstyle C}$ are the ones defined in \eqref{eq:func}. Here the coefficients $k_0$ and $k_i^{\scriptscriptstyle C}$, for $i=L,C,R$, depend on the parameters of system \eqref{eq:01}.
\end{theorem}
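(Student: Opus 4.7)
The proof follows exactly the template of Theorem \ref{theo:scs}: I would decompose $M(h)$ into the four arc-integrals appearing in \eqref{eq:mel01} and evaluate each explicitly. The only substantive change is that in the CCC case the equilibria of $X_{\scriptscriptstyle L}$ and $X_{\scriptscriptstyle R}$ are centers, so $\omega_{\scriptscriptstyle RC}=\sqrt{-a_{\scriptscriptstyle R}^2-b_{\scriptscriptstyle R}c_{\scriptscriptstyle R}}$ and $\omega_{\scriptscriptstyle LC}$ are real, and the flows of $X_{\scriptscriptstyle R}, X_{\scriptscriptstyle L}$ are trigonometric rather than hyperbolic. This forces the flight times to involve $\arccos$ rather than $\log$, which accounts for the replacement of $f_{\scriptscriptstyle R}^{\scriptscriptstyle S}, f_{\scriptscriptstyle L}^{\scriptscriptstyle S}$ in \eqref{eq:melscs} by $f_{\scriptscriptstyle R}^{\scriptscriptstyle C}, f_{\scriptscriptstyle L}^{\scriptscriptstyle C}$ in \eqref{eq:melccc}. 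The two central arcs are identical to those in the SCS proof because the normal form \eqref{eq:fnh} makes $X_{\scriptscriptstyle C}(x,y)=(y,-x)$ in all three cases; their contributions are $\pm 2v_{\scriptscriptstyle C}+\alpha_3 h+\alpha_4 f_{\scriptscriptstyle C}^{\scriptscriptstyle C}(h)$, exactly as in \eqref{sys:c1} and \eqref{sys:c2}, and the $\pm 2v_{\scriptscriptstyle C}$ pieces cancel when summed.

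For the right arc, I would integrate the linear system $\dot x=H_y^{\scriptscriptstyle R}$, $\dot y=-H_x^{\scriptscriptstyle R}$ from $A(h)=(1,h)$, obtaining solutions of the form
$$x_{\scriptscriptstyle R}(t)=x_{\scriptscriptstyle R}^{*}+A_1\cos(\omega_{\scriptscriptstyle RC}t)+A_2\sin(\omega_{\scriptscriptstyle RC}t),\quad y_{\scriptscriptstyle R}(t)=y_{\scriptscriptstyle R}^{*}+B_1\cos(\omega_{\scriptscriptstyle RC}t)+B_2\sin(\omega_{\scriptscriptstyle RC}t),$$
where $(x_{\scriptscriptstyle R}^{*},y_{\scriptscriptstyle R}^{*})$ is the (possibly virtual) equilibrium of $X_{\scriptscriptstyle R}$ and the amplitudes are fixed by the initial condition. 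Imposing $x_{\scriptscriptstyle R}(t_{\scriptscriptstyle R})=1$ at the first positive return gives a flight time $t_{\scriptscriptstyle R}=F_{\scriptscriptstyle R}^{\scriptscriptstyle C}(h)/\omega_{\scriptscriptstyle RC}$, matching the formula in \eqref{eq:func} after a half-angle identity. Substituting into $\int_0^{t_{\scriptscriptstyle R}}(g_{\scriptscriptstyle R}\dot x_{\scriptscriptstyle R}-f_{\scriptscriptstyle R}\dot y_{\scriptscriptstyle R})\,dt$ produces only elementary integrals of products of $\cos$ and $\sin$; the linear-in-$t$ term of the antiderivative contributes the factor $F_{\scriptscriptstyle R}^{\scriptscriptstyle C}(h)$, and the purely sinusoidal pieces evaluated at $t_{\scriptscriptstyle R}$ and $0$ package together with the polynomial $(a_{\scriptscriptstyle R}^2-b_{\scriptscriptstyle R}\beta_{\scriptscriptstyle R})^2+(2a_{\scriptscriptstyle R}^2+b_{\scriptscriptstyle R}^2h^2-2b_{\scriptscriptstyle R}\beta_{\scriptscriptstyle R})\omega_{\scriptscriptstyle RC}^2+\omega_{\scriptscriptstyle RC}^4$ that appears as the prefactor of $F_{\scriptscriptstyle R}^{\scriptscriptstyle C}(h)$ in $f_{\scriptscriptstyle R}^{\scriptscriptstyle C}(h)$. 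This leaves the right arc contribution in the form $\gamma_1 h+\gamma_2 f_{\scriptscriptstyle R}^{\scriptscriptstyle C}(h)$. The left arc is handled by an identical calculation with $L$ replacing $R$, giving $\delta_1 h+\delta_2 f_{\scriptscriptstyle L}^{\scriptscriptstyle C}(h)$.

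Combining all four contributions via \eqref{eq:mel01} produces the claimed expression \eqref{eq:melccc} with
$$k_0=\gamma_1+2b_{\scriptscriptstyle R}\alpha_3+\tfrac{b_{\scriptscriptstyle R}}{b_{\scriptscriptstyle L}}\delta_1,\quad k_{\scriptscriptstyle C}^{\scriptscriptstyle C}=2b_{\scriptscriptstyle R}\alpha_4,\quad k_{\scriptscriptstyle R}^{\scriptscriptstyle C}=\gamma_2,\quad k_{\scriptscriptstyle L}^{\scriptscriptstyle C}=\tfrac{b_{\scriptscriptstyle R}}{b_{\scriptscriptstyle L}}\delta_2.$$
Since both $X_{\scriptscriptstyle L}$ and $X_{\scriptscriptstyle R}$ have centers, no separatrix bounds the crossing periodic annulus, so the formula is valid on the full interval $h\in(0,\infty)$ by the preceding proposition on the structure of $J$. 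The only real obstacle is bookkeeping: carefully tracking signs and simplifying the algebraic expressions so that the flight times collapse into the exact $\arccos$ argument of $F_{\scriptscriptstyle R}^{\scriptscriptstyle C}$ and $F_{\scriptscriptstyle L}^{\scriptscriptstyle C}$ and the polynomial prefactors in \eqref{eq:func}, with no residual $\sin, \cos$ evaluated at $t_{\scriptscriptstyle R}$ or $t_{\scriptscriptstyle L}$. The algorithmic structure is entirely parallel to the proof of Theorem \ref{theo:scs} already worked out.
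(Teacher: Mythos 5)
Your proposal is correct and follows essentially the same route as the paper, which gives no separate proof of Theorem \ref{theo:ccc} and states only that it is analogous to the proof of Theorem \ref{theo:scs}; your outline is precisely that analogy carried out, with unchanged central arcs, trigonometric rather than hyperbolic outer flows yielding $\arccos$ flight times $F_{\scriptscriptstyle R}^{\scriptscriptstyle C}/\omega_{\scriptscriptstyle RC}$ and $F_{\scriptscriptstyle L}^{\scriptscriptstyle C}/\omega_{\scriptscriptstyle LC}$, and the same recombination of the four arc integrals through \eqref{eq:mel01}. The only work remaining is the routine algebra you already acknowledge.
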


\bigskip

The next corollaries provided lower bounds for the number of limit cycles of system $\eqref{eq:01}$ that can bifurcate from a periodic annulus of system $\eqref{eq:01}|_{\epsilon=0}$ in the cases SCS, CCS and CCC. But first, we will recall basic linear algebra results.


Let $\{f_0, f_1,\dots, f_n\}$ a set of real functions defined on a proper interval $I\subset \mathbb{R}$. We say that $\{f_0, f_1,\dots, f_n\}$ is {\it linearly independent} if the unique solution of the equation
$$\sum_{i=0}^{n}\alpha_i f_i(t)=0,$$
for all $t\in I$, is $\alpha_0=\alpha_1=\dots=\alpha_n=0$.

\begin{proposition}\label{prop:li1}
	If $\{f_0, f_1,\dots, f_n\}$ is linearly independent then there exist $t_1, t_2,\dots, t_n\in I$, with $t_i\ne t_j$ for $i\ne j$, and $\alpha_0, \alpha_1, \dots,\alpha_n\in\mathbb{R}$, not all null, such that for every $j\in \{1, 2, \dots, n\}$
	$$\sum_{i=0}^{n}\alpha_i f_i(t_j)=0.$$
\end{proposition}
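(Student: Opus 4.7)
The plan is a short dimension-counting argument. Since $I$ is a proper interval, it contains infinitely many points, so one can pick $n$ pairwise distinct points $t_1, t_2, \dots, t_n \in I$ arbitrarily. The desired conditions
\[
\sum_{i=0}^{n}\alpha_i\, f_i(t_j)=0,\qquad j=1,\dots,n,
\]
form a homogeneous linear system in the $n+1$ unknowns $\alpha_0,\alpha_1,\dots,\alpha_n$, with coefficient matrix $A=(f_i(t_j))$ of size $n\times(n+1)$. Because there are strictly more unknowns than equations, the rank--nullity theorem yields $\dim\ker A\ge (n+1)-n=1$, so some nonzero tuple $(\alpha_0,\dots,\alpha_n)\in\mathbb{R}^{n+1}$ will solve the system. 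This is exactly the required conclusion.

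Next I would observe that the linear independence hypothesis is not needed for the existence of the $\alpha_i$'s per se; its role is to guarantee that the nontrivial linear combination $g=\sum_{i=0}^n\alpha_i f_i$ so produced is not the zero function on $I$. Indeed, if $g\equiv 0$ on $I$, then $(\alpha_0,\dots,\alpha_n)$ would be a nontrivial linear relation among $\{f_0,\dots,f_n\}$, contradicting the assumption. Thus the selected $t_j$'s are actual zeros of a nonzero real-valued function on $I$, which is the property that will be exploited, together with Theorems \ref{theo:scs}, \ref{theo:ccs} and \ref{theo:ccc}, in the subsequent corollaries to exhibit at least three limit cycles of system \eqref{eq:01}.

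There is no genuine obstacle here: the whole argument collapses to the elementary fact that a homogeneous linear system with more unknowns than equations admits a nontrivial solution. The only point requiring attention is the remark above, ensuring that the conclusion is substantive rather than vacuous when one later applies it to the Melnikov function to produce distinct zeros.
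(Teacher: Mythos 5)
Your proof is correct. Note that the paper itself contains no proof of this proposition: it simply refers the reader to \cite{Lli11}, so there is no in-text argument to compare yours against. Your route is elementary and self-contained: for any choice of $n$ pairwise distinct points in the infinite set $I$, the conditions form a homogeneous linear system with $n+1$ unknowns and $n$ equations, so rank--nullity yields a nontrivial solution. You are also right on the key structural point: read literally, the statement never uses the linear independence hypothesis, and its only role is to guarantee that the resulting combination $g=\sum_{i=0}^n\alpha_i f_i$ is not the zero function; your remark supplies exactly the substantive content that the bare statement omits, and this is what the corollaries actually exploit. One caveat worth recording, which concerns the subsequent use of the proposition rather than your proof of it: Theorem 2.1 requires \emph{simple} zeros of $M$ to produce limit cycles, whereas neither the proposition as stated nor the rank--nullity argument controls the multiplicity or sign-changing character of the zeros $t_j$; the stronger variants of this lemma in the literature (including the version in \cite{Lli11}) are formulated precisely to close that gap, so if you wanted your argument to fully support Corollaries 4.4--4.10 you would need to add that refinement (e.g., by perturbing the coefficients so that the $n$ zeros of $g$ become transversal).
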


For a proof of Proposition \ref{prop:li1}, see for instance \cite{Lli11}.

\medskip

Recall that if the Wronskian,  
\begin{displaymath}
	W(f_0,f_1,\ldots,f_n)(x)=\left|\begin{array}{cccc}
		f_0(x) & f_1(x) & \ldots & f_n(x)\\
		f'_0(x) & f'_1(x) & \ldots & f'_n(x)\\
		\vdots & \vdots & \ddots & \vdots\\
		f^{(n)}_0(x) & f^{(n)}_1(x) & \ldots & f^{(n)}_n(x)
	\end{array}\right|,
\end{displaymath}
where $\{f_0, f_1,\dots, f_n\}$ is a set of functions with derivatives until order $n$ on $I$, is different of zero for some $x\in I$, then $\{f_0, f_1, \dots , f_n\}$ is linearly independent on $I$.

\medskip

\begin{corollary}[Case SCS -- Fig. \ref{fig:03} (a)]\label{scs-a}
	Consider the system \eqref{eq:01} with $a_{\scriptscriptstyle L}=b_{\scriptscriptstyle L}=b_{\scriptscriptstyle R}=c_{\scriptscriptstyle R}=1$, $a_{\scriptscriptstyle R}=c_{\scriptscriptstyle L}=0$, $\beta_{\scriptscriptstyle L}=2$ and  $\beta_{\scriptscriptstyle R}=-2$. Then, for $0<\epsilon<<1$, the system \eqref{eq:01} has at least three limit cycles.	
\end{corollary}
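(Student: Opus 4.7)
The plan is to specialize Theorem~\ref{theo:scs} to the parameter values of the corollary, show that the resulting basis of four functions is linearly independent on the relevant interval, and then produce a coefficient vector yielding three simple zeros of $M$; each such zero gives a limit cycle by the Melnikov bifurcation theorem at the end of Section~\ref{sec:Mel}. The identities $a_L^2 + b_L c_L = 1$ and $a_R^2 + b_R c_R = 1$ give $\omega_{LS}=\omega_{RS}=1$ and $\tau = (a_R^2 - b_R\beta_R - 1)/b_R = 1$, so $J=(0,1)$, and substituting in~\eqref{eq:func} collapses the basis to
\begin{align*}
f_0(h) &= h, & f_C^C(h) &= (h^2+1)(\pi - 2\arctan h), \\
f_R^S(h) &= (h^2-1)\log\tfrac{1+h}{1-h}, & f_L^S(h) &= (h^2-4)\log\tfrac{2+h}{2-h}.
\end{align*}
Tracing the $\alpha_i$ in the proof of Theorem~\ref{theo:scs} one finds $k_C^C = p_C+u_C$, $k_R^S = (p_R+u_R)/2$, $k_L^S = (p_L+u_L)/2$, while $k_0$ is a linear combination of the perturbation parameters in which $r_R$ appears with nonzero coefficient and does not appear in the other three coefficients; hence the induced map to $(k_0,k_C^C,k_R^S,k_L^S)\in\R^4$ is surjective, and the four coefficients can be prescribed independently by admissible perturbations~\eqref{eq:02}--\eqref{eq:03}.

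The next step is to verify that $\{f_0, f_C^C, f_R^S, f_L^S\}$ is linearly independent on $(0,1)$. A boundary-evaluation argument suffices: taking $h\to 0^+$ kills $f_0, f_R^S, f_L^S$ while $f_C^C(0^+)=\pi$, forcing the $f_C^C$-coefficient in any vanishing relation to be zero; then $h\to 1^-$ kills $f_R^S$ and yields a relation between the $f_0$- and $f_L^S$-coefficients via $f_0(1)=1$ and $f_L^S(1)=-3\log 3$; comparing the $h^3$-term in the Taylor expansion at $0$ (or evaluating at one more interior point such as $h=1/2$) then forces every coefficient to vanish. An equivalent route is to compute the $4\times 4$ Wronskian $W(f_0,f_C^C,f_R^S,f_L^S)(h^\star)$ at a convenient $h^\star\in(0,1)$ and check it is nonzero. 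With linear independence in hand, Proposition~\ref{prop:li1} produces, for any three prescribed points $0<h_1<h_2<h_3<1$, a nonzero coefficient vector for which $M(h_1)=M(h_2)=M(h_3)=0$, and the surjectivity above realizes this vector by an admissible perturbation.

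To upgrade three zeros to three limit cycles I would finally guarantee simplicity. The coefficient vectors with $M(h_j)=0$ for $j=1,2,3$ form a nontrivial linear subspace of $\R^4$; within this subspace each simplicity condition $M'(h_j)\neq 0$ is an open condition excluding only a proper linear subspace, so a generic element of the kernel has all three zeros simple. The theorem at the end of Section~\ref{sec:Mel} then produces, for $0<\epsilon\ll 1$, a limit cycle of~\eqref{eq:01} near each $L_{h_j}$, yielding the three limit cycles claimed. The main obstacle is making the simplicity step fully rigorous; a robust fallback is to verify that the successive Wronskians $W(f_0)$, $W(f_0,f_C^C)$, $W(f_0,f_C^C,f_R^S)$, and $W(f_0,f_C^C,f_R^S,f_L^S)$ are nowhere zero on $(0,1)$, which promotes the family to an ECT-system on $(0,1)$ and automatically guarantees the existence of a coefficient vector producing exactly three simple zeros of $M$.
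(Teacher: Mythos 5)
Your proposal is correct and follows essentially the same route as the paper: specialize the Melnikov function of Theorem \ref{theo:scs} to these parameters (obtaining the same four basis functions and coefficients $k_0$, $k_{\scriptscriptstyle C}^{\scriptscriptstyle C}=p_{\scriptscriptstyle C}+u_{\scriptscriptstyle C}$, $k_{\scriptscriptstyle R}^{\scriptscriptstyle S}=(p_{\scriptscriptstyle R}+u_{\scriptscriptstyle R})/2$, $k_{\scriptscriptstyle L}^{\scriptscriptstyle S}=(p_{\scriptscriptstyle L}+u_{\scriptscriptstyle L})/2$), verify linear independence on $(0,1)$ -- the paper does this exactly by your ``equivalent route,'' evaluating the Wronskian at $h=0.4$ -- and conclude via Proposition \ref{prop:li1}. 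Your extra care about the simplicity of the three zeros addresses a point the paper passes over silently; this is a genuine refinement rather than a different method, and is handled in the cited reference of Llibre and Swirszcz.
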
	
\begin{proof}
	For $a_{\scriptscriptstyle L}=b_{\scriptscriptstyle L}=b_{\scriptscriptstyle R}=c_{\scriptscriptstyle R}=1$, $a_{\scriptscriptstyle R}=c_{\scriptscriptstyle L}=0$, $\beta_{\scriptscriptstyle L}=2$ and  $\beta_{\scriptscriptstyle R}=-2$, the eigenvalues of the linear part of the left, central and right subsystem from $\eqref{eq:01}|_{\epsilon=0}$ are $\pm 1$, $\pm i$ and $\pm 1$, respectively, i.e. we have one center and two saddles. Moreover, the coordinates of the equilibrium points on the left and right subsystem from $\eqref{eq:01}|_{\epsilon=0}$ are $(-3,2)$ and $(2,0)$, respectively, and so the saddles are real. Note that $P_{\scriptscriptstyle L}^{\scriptscriptstyle u}=(-1,2)$ and $P_{\scriptscriptstyle R}^{\scriptscriptstyle s}=(1,1)$, i.e. the ordinates of this points are different and $\tau=1$. Therefore, the first order Melnikov function from Theorem \ref{theo:scs} becomes
	$$M(h)=k_0f_0(h)+k_{\scriptscriptstyle C}^{\scriptscriptstyle C}f_{\scriptscriptstyle C}^{\scriptscriptstyle C}(h)+k_{\scriptscriptstyle R}^{\scriptscriptstyle S}f_{\scriptscriptstyle R}^{\scriptscriptstyle S}(h)+k_{\scriptscriptstyle L}^{\scriptscriptstyle S}f_{\scriptscriptstyle L}^{\scriptscriptstyle S}(h),\quad \forall\, h\in(0,1),$$
	with
	\begin{equation*}
		\begin{aligned}
			f_0(h)&= h, \\
			f_{\scriptscriptstyle C}^{\scriptscriptstyle C}(h)&=(h^2+1)\arccos\bigg(\frac{h^2-1}{h^2+1}\bigg),\\
			f_{\scriptscriptstyle R}^{\scriptscriptstyle S}(h)&=(h^2-1)\log\bigg(-\frac{h+1}{h-1}\bigg),\\
			f_{\scriptscriptstyle L}^{\scriptscriptstyle S}(h)&=(h^2-4)\log\bigg(-\frac{h+2}{h-2}\bigg),	
		\end{aligned}
	\end{equation*}	
	$$k_0=2(2p_{\scriptscriptstyle L}-p_{\scriptscriptstyle C}-r_{\scriptscriptstyle L}+r_{\scriptscriptstyle R}+u_{\scriptscriptstyle C}+u_{\scriptscriptstyle L})+3p_{\scriptscriptstyle R}+u_{\scriptscriptstyle R},\quad k_{\scriptscriptstyle C}^{\scriptscriptstyle C}=p_{\scriptscriptstyle C}+u_{\scriptscriptstyle C},$$
	$$ k_{\scriptscriptstyle R}^{\scriptscriptstyle S}=\frac{p_{\scriptscriptstyle R}+u_{\scriptscriptstyle R}}{2}\quad\text{and}\quad k_{\scriptscriptstyle L}^{\scriptscriptstyle S}=\frac{p_{\scriptscriptstyle L}+u_{\scriptscriptstyle L}}{2}.$$	
	Consider the set of functions $\mathcal{F}_{\scriptscriptstyle SCS}=\{f_0,f_{\scriptscriptstyle C}^{\scriptscriptstyle C},f_{\scriptscriptstyle R}^{\scriptscriptstyle S},f_{\scriptscriptstyle L}^{\scriptscriptstyle S}\}$. Using the algebraic manipulator Mathematica (see \cite{Wol20}), we compute the Wronskian $W(f_0, f_{\scriptscriptstyle C}^{\scriptscriptstyle C},f_{\scriptscriptstyle R}^{\scriptscriptstyle S},f_{\scriptscriptstyle L}^{\scriptscriptstyle S})(h)$ and evaluate in some point on the interval $(0,1)$. More precisely, $W(f_0, f_{\scriptscriptstyle C}^{\scriptscriptstyle C},f_{\scriptscriptstyle R}^{\scriptscriptstyle S},f_{\scriptscriptstyle L}^{\scriptscriptstyle S})(0.4)=9.16568$. Then, set of functions $\mathcal{F}_{\scriptscriptstyle SCS}$ is linearly independent on the interval  $(0,1)$ and, by Proposition \ref{prop:li1}, there are $h_i\in(0,1)$, with $i=1,2,3$, such that $M(h_i)=0$. Therefore, for $0< \epsilon <<1$, the system \eqref{eq:01} has a unique limit cycle near $L_{h_i}$, for each $i=1,2,3$, i.e. the system \eqref{eq:01} has at least three limit cycles.
\end{proof}	

\begin{corollary}[Case SCS -- Fig. \ref{fig:03} (b)]\label{scs-b}
	Consider the system \eqref{eq:01} with $a_{\scriptscriptstyle L}=b_{\scriptscriptstyle L}=b_{\scriptscriptstyle R}=c_{\scriptscriptstyle R}=\beta_{\scriptscriptstyle L}=1$, $a_{\scriptscriptstyle R}=c_{\scriptscriptstyle L}=0$ and $\beta_{\scriptscriptstyle R}=-2$. Then, for $0<\epsilon<<1$, the system \eqref{eq:01} has at least two limit cycles.	
\end{corollary}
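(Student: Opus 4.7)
The plan is to imitate the strategy of Corollary~\ref{scs-a} while exploiting the Remark following Theorem~\ref{theo:scs}, which shows that when the ordinates of $P_{\scriptscriptstyle L}^{u}$ and $P_{\scriptscriptstyle R}^{s}$ coincide, the Melnikov function collapses from four basis functions to three.

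First, I verify that the parameter choice gives $\omega_{\scriptscriptstyle LS} = \omega_{\scriptscriptstyle RS} = 1$ and that both $(a_{\scriptscriptstyle L}^2 + b_{\scriptscriptstyle L}\beta_{\scriptscriptstyle L} - \omega_{\scriptscriptstyle LS}^2)/(b_{\scriptscriptstyle L}\omega_{\scriptscriptstyle LS})$ and $(a_{\scriptscriptstyle R}^2 - b_{\scriptscriptstyle R}\beta_{\scriptscriptstyle R} - \omega_{\scriptscriptstyle RS}^2)/(b_{\scriptscriptstyle R}\omega_{\scriptscriptstyle RS})$ equal $1$, placing the phase portrait in the configuration of Fig.~\ref{fig:03}(b) with $\tau = 1$. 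A short computation shows that the linear parts of the unperturbed left, central and right subsystems have eigenvalues $\pm 1$, $\pm i$, $\pm 1$ respectively, the equilibria of the left and right subsystems are at $(-2,1)\in R_{\scriptscriptstyle L}$ and $(2,0)\in R_{\scriptscriptstyle R}$, the central equilibrium is a real center at the origin, and hypotheses (H1)--(H3) are satisfied so that the periodic annulus exists for $h\in(0,1)$.

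Next, I invoke formula \eqref{eq:scs1} of the Remark to write
\[
M(h) = k_0\,f_0(h) + k_{\scriptscriptstyle C}^{\scriptscriptstyle C}\,f_{\scriptscriptstyle C}^{\scriptscriptstyle C}(h) + k_{\scriptscriptstyle R}^{\scriptscriptstyle S}\,f_{\scriptscriptstyle R}^{\scriptscriptstyle S}(h), \qquad h\in(0,1),
\]
with $f_0,f_{\scriptscriptstyle C}^{\scriptscriptstyle C},f_{\scriptscriptstyle R}^{\scriptscriptstyle S}$ as in \eqref{eq:func}. Substituting the present parameters simplifies these to $f_0(h)=h$, $f_{\scriptscriptstyle C}^{\scriptscriptstyle C}(h)=(h^2+1)\arccos((h^2-1)/(h^2+1))$ and $f_{\scriptscriptstyle R}^{\scriptscriptstyle S}(h)=(h^2-1)\log(-(h+1)/(h-1))$, while the coefficients $k_0, k_{\scriptscriptstyle C}^{\scriptscriptstyle C}, k_{\scriptscriptstyle R}^{\scriptscriptstyle S}$ become explicit linear combinations of the perturbation parameters read off from the Remark. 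I would then check that $\{f_0,f_{\scriptscriptstyle C}^{\scriptscriptstyle C},f_{\scriptscriptstyle R}^{\scriptscriptstyle S}\}$ is linearly independent on $(0,1)$ by computing the $3\times 3$ Wronskian with the algebraic manipulator and evaluating at a convenient point such as $h=1/2$ to see that it is nonzero.

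Once linear independence is established, Proposition~\ref{prop:li1} furnishes two distinct points $h_1,h_2\in(0,1)$ and coefficients $(k_0,k_{\scriptscriptstyle C}^{\scriptscriptstyle C},k_{\scriptscriptstyle R}^{\scriptscriptstyle S})\ne 0$ such that $M(h_1)=M(h_2)=0$; these three coefficients are independently realizable because $k_{\scriptscriptstyle C}^{\scriptscriptstyle C}$ depends only on $p_{\scriptscriptstyle C}+u_{\scriptscriptstyle C}$, $k_{\scriptscriptstyle R}^{\scriptscriptstyle S}$ on a distinct combination of $p_{\scriptscriptstyle R}+u_{\scriptscriptstyle R}$ and $p_{\scriptscriptstyle L}+u_{\scriptscriptstyle L}$, and $k_0$ still carries free parameters $r_{\scriptscriptstyle R}, r_{\scriptscriptstyle L}$ that appear nowhere else, so the linear map from perturbation parameters to $(k_0,k_{\scriptscriptstyle C}^{\scriptscriptstyle C},k_{\scriptscriptstyle R}^{\scriptscriptstyle S})$ is surjective. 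A small generic perturbation within this parameter space turns both zeros simple, and the final assertion of the Melnikov theorem of Section~\ref{sec:Mel} produces a limit cycle of \eqref{eq:01} near each $L_{h_i}$, giving the claimed two limit cycles.

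The main technical obstacle is the Wronskian computation, since the three basis functions mix polynomial, logarithmic and arccosine terms whose derivatives produce unwieldy expressions; this is essentially a symbolic-computation check. A secondary subtlety is to guarantee that the zeros provided by Proposition~\ref{prop:li1} can be arranged to be simple, but this is generic once the image of the coefficient map is full-dimensional, so no genuine obstruction arises.
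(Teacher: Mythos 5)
Your proposal follows essentially the same route as the paper's own proof: verify the eigenvalues, the real saddles, and the coincidence of the ordinates of $P_{\scriptscriptstyle L}^{u}$ and $P_{\scriptscriptstyle R}^{s}$ so that $\tau=1$ and the Melnikov function collapses to the three-term expression \eqref{eq:scs1}, then check the nonvanishing of the $3\times 3$ Wronskian at a point of $(0,1)$ (the paper uses $h=0.4$, giving $-10.6955$) and apply Proposition \ref{prop:li1} to obtain two zeros and hence two limit cycles. Your added remarks on the surjectivity of the coefficient map and on arranging the zeros to be simple are a welcome refinement of a point the paper leaves implicit, but they do not change the argument.
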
	
\begin{proof}
	For $a_{\scriptscriptstyle L}=b_{\scriptscriptstyle L}=b_{\scriptscriptstyle R}=c_{\scriptscriptstyle R}=\beta_{\scriptscriptstyle L}=1$, $a_{\scriptscriptstyle R}=c_{\scriptscriptstyle L}=0$ and $\beta_{\scriptscriptstyle R}=-2$, the eigenvalues of the linear part of the left, central and right subsystem from $\eqref{eq:01}|_{\epsilon=0}$ are $\pm 1$, $\pm i$ and $\pm 1$, respectively, i.e. we have one center and two saddles. Moreover, the coordinates of the equilibrium points on the left and right subsystem from $\eqref{eq:01}|_{\epsilon=0}$ are $(-2,1)$ and $(2,0)$, respectively, and so the saddles are real. Note that $P_{\scriptscriptstyle L}^{\scriptscriptstyle u}=(-1,1)$ and $P_{\scriptscriptstyle R}^{\scriptscriptstyle s}=(1,1)$, i.e. the ordinates of this points are the same and $\tau=1$. Therefore, the first order Melnikov function from Theorem \ref{theo:scs} becomes
	$$M(h)=k_0f_0(h)+k_{\scriptscriptstyle C}^{\scriptscriptstyle C}f_{\scriptscriptstyle C}^{\scriptscriptstyle C}(h)+k_{\scriptscriptstyle R}^{\scriptscriptstyle S}f_{\scriptscriptstyle R}^{\scriptscriptstyle S}(h),\quad \forall \,h\in(0,1),$$
	with
	\begin{equation*}
		\begin{aligned}
			f_0(h)&= h, \\
			f_{\scriptscriptstyle C}^{\scriptscriptstyle C}(h)&=(h^2+1)\arccos\bigg(\frac{h^2-1}{h^2+1}\bigg),\\
			f_{\scriptscriptstyle R}^{\scriptscriptstyle S}(h)&=(h^2-1)\log\bigg(-\frac{h+1}{h-1}\bigg),	
		\end{aligned}
	\end{equation*}	
	$$k_0=2(u_{\scriptscriptstyle C}-p_{\scriptscriptstyle C}-r_{\scriptscriptstyle L}+r_{\scriptscriptstyle R})+3(p_{\scriptscriptstyle R}+p_{\scriptscriptstyle L})+u_{\scriptscriptstyle R}+u_{\scriptscriptstyle L},\quad k_{\scriptscriptstyle C}^{\scriptscriptstyle C}=p_{\scriptscriptstyle C}+u_{\scriptscriptstyle C}$$
	and
	$$ k_{\scriptscriptstyle R}^{\scriptscriptstyle S}=\frac{p_{\scriptscriptstyle R}+p_{\scriptscriptstyle L}+u_{\scriptscriptstyle R}+u_{\scriptscriptstyle L}}{2}.$$	
	Consider the set of functions $\mathcal{F}_{\scriptscriptstyle SCS}=\{f_0,f_{\scriptscriptstyle C}^{\scriptscriptstyle C},f_{\scriptscriptstyle R}^{\scriptscriptstyle S}\}$. Using the algebraic manipulator Mathematica, we compute the Wronskian $W(f_0, f_{\scriptscriptstyle C}^{\scriptscriptstyle C},f_{\scriptscriptstyle R}^{\scriptscriptstyle S})(h)$ and evaluate in some point on the interval $(0,1)$. More precisely, $W(f_0, f_{\scriptscriptstyle C}^{\scriptscriptstyle C},f_{\scriptscriptstyle R}^{\scriptscriptstyle S})(0.4)=-10.6955$. Then, set of functions $\mathcal{F}_{\scriptscriptstyle SCS}$ is linearly independent on the interval  $(0,1)$ and, by Proposition \ref{prop:li1}, there are $h_i\in(0,1)$, with $i=1,2$, such that $M(h_i)=0$. Therefore, for $0< \epsilon <<1$, the system \eqref{eq:01} has a unique limit cycle near $L_{h_i}$, for each $i=1,2$, i.e. the system \eqref{eq:01} has at least two limit cycles.
\end{proof}	

\begin{corollary}[Case CCS -- Fig. \ref{fig:03} (c)] \label{ccs-c}
	Consider the system \eqref{eq:01} with $a_{\scriptscriptstyle L}=\beta_{\scriptscriptstyle L}=b_{\scriptscriptstyle R}=c_{\scriptscriptstyle R}=1$, $b_{\scriptscriptstyle L}=2$, $c_{\scriptscriptstyle L}=-1$, $a_{\scriptscriptstyle R}=0$ and $\beta_{\scriptscriptstyle R}=-2$. Then for $0<\epsilon<<1$, the system \eqref{eq:01} has at least three limit cycles.	
\end{corollary}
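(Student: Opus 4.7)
The plan is to mirror the argument of Corollary~\ref{scs-a}. First I would verify that the chosen parameters realise the CCS configuration of Fig.~\ref{fig:03}(c): direct checks give $a_L^2+b_Lc_L=-1<0$ (center on the left, eigenvalues $\pm i$) and $a_R^2+b_Rc_R=1>0$ (saddle on the right, eigenvalues $\pm 1$); solving $H^L_x=H^L_y=0$ and $H^R_x=H^R_y=0$ locates the singularities at $(3,-2)\notin R_L$ (virtual center) and $(2,0)\in R_R$ (real saddle), so hypothesis (H1) holds, and a direct check of the tangencies on $x=\pm 1$ confirms (H2)--(H3). Moreover $\omega_{RS}=1$ and $\tau=(a_R^2-b_R\beta_R-\omega_{RS}^2)/(b_R\omega_{RS})=1$, so the Melnikov function is defined on $J=(0,1)$.

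Next I would substitute the data into Theorem~\ref{theo:ccs}. Since $\omega_{LC}=\omega_{RS}=1$, the four basis functions reduce to
\begin{align*}
f_0(h)&=h,\\
f_C^C(h)&=(h^2+1)\arccos\!\left(\tfrac{h^2-1}{h^2+1}\right),\\
f_L^C(h)&=4(h^2+4)\arccos\!\left(\tfrac{4-h^2}{4+h^2}\right),\\
f_R^S(h)&=(h^2-1)\log\!\left(-\tfrac{h+1}{h-1}\right),
\end{align*}
and the Melnikov function takes the form
\begin{equation*}
M(h)=k_0 f_0(h)+k_C^C f_C^C(h)+k_L^C f_L^C(h)+k_R^S f_R^S(h),\qquad h\in(0,1).
\end{equation*}
The coefficients depend linearly on the perturbation parameters $p_i,q_i,r_i,s_i,u_i,v_i$ for $i=L,C,R$, and as in the proof of Theorem~\ref{theo:scs} the induced linear map is surjective onto $\mathbb{R}^4$, so any quadruple $(k_0,k_C^C,k_L^C,k_R^S)$ is attainable.

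The decisive step is to show that $\mathcal{F}_{CCS}=\{f_0,f_C^C,f_L^C,f_R^S\}$ is linearly independent on $(0,1)$. I would compute the $4\times 4$ Wronskian $W(f_0,f_C^C,f_L^C,f_R^S)(h)$ symbolically in Mathematica and evaluate at a convenient test point, say $h=2/5$; a nonzero numerical value forces linear independence on the whole interval. Proposition~\ref{prop:li1} then furnishes three distinct values $h_1,h_2,h_3\in(0,1)$ together with a nontrivial quadruple of coefficients making $M(h_j)=0$ for $j=1,2,3$. Realising these coefficients through an admissible choice of the perturbation data, and if necessary adding an arbitrarily small perturbation so that each zero becomes simple, the Melnikov theorem of Section~\ref{sec:Mel} yields a limit cycle of \eqref{eq:01} near each $L_{h_j}$, giving at least three limit cycles.

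The main obstacle is the Wronskian evaluation: the entries combine polynomials with $\arccos$ and $\log$, so an analytic closed form is impractical and one has to rely on symbolic computation at a single value of $h$. A minor supplementary task is to write out the explicit linear formulas expressing $(k_0,k_C^C,k_L^C,k_R^S)$ in terms of the eighteen perturbation parameters and verify rank four; this proceeds exactly as in the SCS case and presents no genuine difficulty.
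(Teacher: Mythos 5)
Your proposal is correct and follows essentially the same route as the paper: verify the CCS configuration and $\tau=1$, specialize Theorem \ref{theo:ccs} to get $M(h)$ on $(0,1)$ as a combination of $f_0, f_{\scriptscriptstyle C}^{\scriptscriptstyle C}, f_{\scriptscriptstyle R}^{\scriptscriptstyle S}, f_{\scriptscriptstyle L}^{\scriptscriptstyle C}$, check the Wronskian numerically at $h=0.4$, and invoke Proposition \ref{prop:li1}. The only discrepancy is the harmless overall factor $4$ in your $f_{\scriptscriptstyle L}^{\scriptscriptstyle C}$ (which the paper absorbs into the coefficient), and your extra remarks on surjectivity of the coefficient map and simplicity of the zeros are points the paper leaves implicit.
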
	
\begin{proof}
	For $a_{\scriptscriptstyle L}=\beta_{\scriptscriptstyle L}=b_{\scriptscriptstyle R}=c_{\scriptscriptstyle R}=1$, $b_{\scriptscriptstyle L}=2$, $c_{\scriptscriptstyle L}=-1$, $a_{\scriptscriptstyle R}=0$ and $\beta_{\scriptscriptstyle R}=-2$, the eigenvalues of the linear part of left, central and right subsystem from $\eqref{eq:01}|_{\epsilon=0}$ are $\pm i$, $\pm i$ and $\pm 1$, respectively, i.e. we have two centers and one saddle. Moreover, the coordinates of the equilibrium point on the left and right subsystem from $\eqref{eq:01}|_{\epsilon=0}$ are  $(3,-2)$ and $(2,0)$, respectively, and so we have a virtual center and a real saddle. Note that $P_{\scriptscriptstyle R}^{\scriptscriptstyle s}=(1,1)$ and $\tau=1$. Therefore, the first order Melnikov function from Theorem \ref{theo:ccs} becomes
	$$M(h)=k_0f_0(h)+k_{\scriptscriptstyle C}^{\scriptscriptstyle C}f_{\scriptscriptstyle C}^{\scriptscriptstyle C}(h)+k_{\scriptscriptstyle R}^{\scriptscriptstyle S}f_{\scriptscriptstyle R}^{\scriptscriptstyle S}(h)+k_{\scriptscriptstyle L}^{\scriptscriptstyle C}f_{\scriptscriptstyle L}^{\scriptscriptstyle C}(h),\quad \forall \,h\in(0,1),$$
	with
	\begin{equation*}
		\begin{aligned}
			f_0(h)&= h, \\
			f_{\scriptscriptstyle C}^{\scriptscriptstyle C}(h)&=(h^2+1)\arccos\bigg(\frac{h^2-1}{h^2+1}\bigg),\\
			f_{\scriptscriptstyle R}^{\scriptscriptstyle S}(h)&=(h^2-1)\log\bigg(-\frac{h+1}{h-1}\bigg),\\
			f_{\scriptscriptstyle L}^{\scriptscriptstyle C}(h)&=(h^2+4)\arccos\bigg(\frac{4-h^2}{h^2+4}\bigg),	
		\end{aligned}
	\end{equation*}
	$$k_0=2(u_{\scriptscriptstyle C}-p_{\scriptscriptstyle C}+r_{\scriptscriptstyle R}-u_{\scriptscriptstyle L})-r_{\scriptscriptstyle L}-p_{\scriptscriptstyle L}+u_{\scriptscriptstyle R}+3p_{\scriptscriptstyle R},\quad k_{\scriptscriptstyle C}^{\scriptscriptstyle C}=p_{\scriptscriptstyle C}+u_{\scriptscriptstyle C},$$
	$$ k_{\scriptscriptstyle R}^{\scriptscriptstyle S}=\frac{p_{\scriptscriptstyle R}+u_{\scriptscriptstyle R}}{2}\quad\text{and}\quad k_{\scriptscriptstyle L}^{\scriptscriptstyle S}=\frac{p_{\scriptscriptstyle L}+u_{\scriptscriptstyle L}}{2}.$$		
	Consider the set of functions $\mathcal{F}_{\scriptscriptstyle CCS}=\{f_0,f_{\scriptscriptstyle C}^{\scriptscriptstyle C},f_{\scriptscriptstyle R}^{\scriptscriptstyle S},f_{\scriptscriptstyle L}^{\scriptscriptstyle C}\}$. Using the algebraic manipulator Mathematica, we compute the Wronskian 
	$W(f_0, f_{\scriptscriptstyle C}^{\scriptscriptstyle C},$ $f_{\scriptscriptstyle R}^{\scriptscriptstyle S},f_{\scriptscriptstyle L}^{\scriptscriptstyle C})(h)$ and evaluate in some point on the interval $(0,1)$. More precisely, $W(f_0, f_{\scriptscriptstyle C}^{\scriptscriptstyle C},f_{\scriptscriptstyle R}^{\scriptscriptstyle S},f_{\scriptscriptstyle L}^{\scriptscriptstyle C})(0.4)=13.25$. Then, set of functions $\mathcal{F}_{\scriptscriptstyle CCS}$ is linearly independent on the interval $(0,1)$ and, by Proposition \ref{prop:li1}, there are $h_i\in(0,1)$, with $i=1,2,3$, such that $M(h_i)=0$. Therefore, for $0< \epsilon <<1$, the system \eqref{eq:01} has a unique limit cycle near $L_{h_i}$, for each $i=1,2,3$, i.e. the system \eqref{eq:01} has at least three limit cycles.
\end{proof}	

\begin{corollary}[Case CCS -- Fig. \ref{fig:03} (d)]\label{ccs-d}
	Consider the system \eqref{eq:01} with $a_{\scriptscriptstyle L}=b_{\scriptscriptstyle R}=c_{\scriptscriptstyle R}=1$, $b_{\scriptscriptstyle L}=2$, $c_{\scriptscriptstyle L}=-1$, $a_{\scriptscriptstyle R}=0$ and $\beta_{\scriptscriptstyle R}=\beta_{\scriptscriptstyle L}=-2$. Then for $0<\epsilon<<1$, the system \eqref{eq:01} has at least three limit cycles.	
\end{corollary}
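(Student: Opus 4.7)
The plan is to mirror the strategy of Corollary~\ref{ccs-c}, adapted to reflect the change $\beta_{\scriptscriptstyle L}=-2$ (rather than $\beta_{\scriptscriptstyle L}=1$), which now makes the left center real instead of virtual. First I would verify the CCS signature of the linearizations: $a_{\scriptscriptstyle L}^2+b_{\scriptscriptstyle L} c_{\scriptscriptstyle L}=-1<0$ and $a_{\scriptscriptstyle C}^2+b_{\scriptscriptstyle C} c_{\scriptscriptstyle C}=-1<0$ produce centers, while $a_{\scriptscriptstyle R}^2+b_{\scriptscriptstyle R} c_{\scriptscriptstyle R}=1>0$ produces a saddle. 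Solving $X_{\scriptscriptstyle L}=0$ and $X_{\scriptscriptstyle R}=0$ places the equilibria at $(-3,1)\in R_{\scriptscriptstyle L}$ and $(2,0)\in R_{\scriptscriptstyle R}$, so both are real, matching the configuration of Fig.~\ref{fig:03}(d). A direct computation gives $\omega_{\scriptscriptstyle RS}=1$ and $P_{\scriptscriptstyle R}^{\scriptscriptstyle s}=(1,1)$, hence $\tau=1$ and the periodic annulus is parametrized by $h\in(0,1)$.

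Next I would substitute the chosen parameters into formula~\eqref{eq:melccs} of Theorem~\ref{theo:ccs}. After simplification (and absorbing a constant factor into $k_{\scriptscriptstyle L}^{\scriptscriptstyle C}$), the effective basis becomes
\begin{equation*}
f_0(h)=h,\quad f_{\scriptscriptstyle C}^{\scriptscriptstyle C}(h)=(h^2+1)\arccos\!\bigl(\tfrac{h^2-1}{h^2+1}\bigr),\quad f_{\scriptscriptstyle R}^{\scriptscriptstyle S}(h)=(h^2-1)\log\!\bigl(-\tfrac{h+1}{h-1}\bigr),
\end{equation*}
together with $f_{\scriptscriptstyle L}^{\scriptscriptstyle C}(h)=(h^2+1)\arccos\!\bigl(\tfrac{1-h^2}{h^2+1}\bigr)$. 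The coefficients $k_0,k_{\scriptscriptstyle C}^{\scriptscriptstyle C},k_{\scriptscriptstyle R}^{\scriptscriptstyle S},k_{\scriptscriptstyle L}^{\scriptscriptstyle C}$ are then explicit linear combinations of the perturbation parameters $p_*,r_*,u_*,v_*$, and a brief inspection shows that these four combinations can be prescribed independently.

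I would then establish linear independence of $\mathcal F_{\scriptscriptstyle CCS}=\{f_0,f_{\scriptscriptstyle C}^{\scriptscriptstyle C},f_{\scriptscriptstyle R}^{\scriptscriptstyle S},f_{\scriptscriptstyle L}^{\scriptscriptstyle C}\}$ on $(0,1)$ by computing the Wronskian in Mathematica and evaluating it at a convenient point such as $h_0=0.4$. A nonzero value there, combined with Proposition~\ref{prop:li1}, produces three distinct $h_1,h_2,h_3\in(0,1)$ and a choice of coefficients making $M$ vanish simply at each $h_i$; the Melnikov theorem of Section~\ref{sec:Mel} then yields one limit cycle of~\eqref{eq:01} near each $L_{h_i}$, for a total of at least three.

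The hard part will be exactly this last step. Because $\beta_{\scriptscriptstyle L}=-2$ activates the identity $\arccos\!\bigl(\tfrac{1-h^2}{1+h^2}\bigr)=\pi-\arccos\!\bigl(\tfrac{h^2-1}{h^2+1}\bigr)$, the two arccosine-based functions collapse into the polynomial relation $f_{\scriptscriptstyle L}^{\scriptscriptstyle C}+f_{\scriptscriptstyle C}^{\scriptscriptstyle C}=\pi(h^2+1)$ (up to the absorbed constant), so $\operatorname{span}\{f_{\scriptscriptstyle C}^{\scriptscriptstyle C},f_{\scriptscriptstyle L}^{\scriptscriptstyle C}\}=\operatorname{span}\{f_{\scriptscriptstyle C}^{\scriptscriptstyle C},h^2+1\}$; independence of the full quadruple therefore depends on $h^2+1$ being transcendentally unrelated to $f_0$ and $f_{\scriptscriptstyle R}^{\scriptscriptstyle S}$. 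Should the symbolic Wronskian become unwieldy, a robust fallback is to Taylor expand about $h=0$: the even powers isolate the coefficient of $h^2+1$ (forcing it to vanish), while the odd powers, obtained from the series of $\arctan h$ and $\log\tfrac{1+h}{1-h}$, successively pin the remaining three coefficients to zero.
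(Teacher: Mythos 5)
Your overall strategy coincides with the paper's: substitute the parameters into Theorem~\ref{theo:ccs}, verify linear independence of the resulting four functions, and invoke Proposition~\ref{prop:li1} to produce three zeros of $M$ and hence three limit cycles. Where you genuinely depart from the printed proof is in the computation of $f_{L}^{C}$, and your version is the correct one. With $a_{L}=1$, $b_{L}=2$, $c_{L}=-1$, $\beta_{L}=-2$ one gets $\omega_{LC}=1$, $a_{L}^2+b_{L}\beta_{L}=-3$, and the factor multiplying $F_{L}^{C}$ in \eqref{eq:func} equals $9+(4h^2-6)+1=4(h^2+1)$, so $f_{L}^{C}(h)=4(h^2+1)\arccos\bigl(\tfrac{1-h^2}{1+h^2}\bigr)$ --- exactly your expression, whereas the paper prints $(h^2+4)\arccos\bigl(\tfrac{4-h^2}{h^2+4}\bigr)$, which is the case-(c) function (valid for $\beta_{L}=1$, not $\beta_{L}=-2$); the Wronskian the paper evaluates at $h=0.2$ is therefore computed for the wrong quadruple. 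Your observation that $\arccos(-x)=\pi-\arccos(x)$ collapses $f_{L}^{C}$ into $\pi(h^2+1)-f_{C}^{C}$ (after removing the constant factor) is the key point these parameter values introduce: the effective basis is $\{h,\,f_{C}^{C},\,f_{R}^{S},\,h^2+1\}$, and your Taylor fallback does settle its independence, since the even part of $f_{C}^{C}$ is exactly $\pi(1+h^2)$ (because $\arccos\bigl(\tfrac{h^2-1}{h^2+1}\bigr)=\pi-2\arctan h$) while $h$ and $f_{R}^{S}$ are odd, so the even powers force the coefficient of $h^2+1$ to vanish and the $h^3$ and $h^5$ coefficients of $(1+h^2)\arctan h$ and $(h^2-1)\log\tfrac{1+h}{1-h}$ then kill the remaining ones. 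So your argument is sound and, unlike the printed one, actually confronts the degeneracy; the only step worth making fully explicit is that $k_0$ (through $r_{R}$), $k_{C}^{C}-k_{L}^{C}$, $k_{R}^{S}$ and $k_{L}^{C}$ can be prescribed independently, which follows at once from the formulas for the $\alpha_i$ in the proof of Theorem~\ref{theo:scs}.
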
	
\begin{proof}
	For $a_{\scriptscriptstyle L}=b_{\scriptscriptstyle R}=c_{\scriptscriptstyle R}=1$, $b_{\scriptscriptstyle L}=2$, $c_{\scriptscriptstyle L}=-1$, $a_{\scriptscriptstyle R}=0$ and $\beta_{\scriptscriptstyle R}=\beta_{\scriptscriptstyle L}=-2$, the eigenvalues of the linear part of left, central and right subsystem from $\eqref{eq:01}|_{\epsilon=0}$ are $\pm i$, $\pm i$ and $\pm 1$, respectively, i.e. we have two centers and one saddle. Moreover, the coordinates of the equilibrium point on the left and right subsystem from $\eqref{eq:01}|_{\epsilon=0}$ are  $(-3,1)$ and $(2,0)$, respectively, and so we have a real center and a real saddle. Note that $P_{\scriptscriptstyle R}^{\scriptscriptstyle s}=(1,1)$ and $\tau=1$. Therefore, the first order Melnikov function from Theorem \ref{theo:ccs} becomes
	$$M(h)=k_0f_0(h)+k_{\scriptscriptstyle C}^{\scriptscriptstyle C}f_{\scriptscriptstyle C}^{\scriptscriptstyle C}(h)+k_{\scriptscriptstyle R}^{\scriptscriptstyle S}f_{\scriptscriptstyle R}^{\scriptscriptstyle S}(h)+k_{\scriptscriptstyle L}^{\scriptscriptstyle C}f_{\scriptscriptstyle L}^{\scriptscriptstyle C}(h),\quad \forall \,h\in(0,1),$$
	with
	\begin{equation*}
		\begin{aligned}
			f_0(h)&= h, \\
			f_{\scriptscriptstyle C}^{\scriptscriptstyle C}(h)&=(h^2+1)\arccos\bigg(\frac{h^2-1}{h^2+1}\bigg),\\
			f_{\scriptscriptstyle R}^{\scriptscriptstyle S}(h)&=(h^2-1)\log\bigg(-\frac{h+1}{h-1}\bigg),\\
			f_{\scriptscriptstyle L}^{\scriptscriptstyle C}(h)&=(h^2+4)\arccos\bigg(\frac{4-h^2}{h^2+4}\bigg),	
		\end{aligned}
	\end{equation*}
	$$k_0=2(u_{\scriptscriptstyle C}-p_{\scriptscriptstyle C}+r_{\scriptscriptstyle R}+p_{\scriptscriptstyle L})-r_{\scriptscriptstyle L}+u_{\scriptscriptstyle L}+u_{\scriptscriptstyle R}+3p_{\scriptscriptstyle R},\quad k_{\scriptscriptstyle C}^{\scriptscriptstyle C}=p_{\scriptscriptstyle C}+u_{\scriptscriptstyle C},$$
	$$k_{\scriptscriptstyle R}^{\scriptscriptstyle S}=\frac{p_{\scriptscriptstyle R}+u_{\scriptscriptstyle R}}{2}\quad\text{and}\quad k_{\scriptscriptstyle L}^{\scriptscriptstyle S}=\frac{p_{\scriptscriptstyle L}+u_{\scriptscriptstyle L}}{2}.$$		
	Consider the set of functions $\mathcal{F}_{\scriptscriptstyle CCS}=\{f_0,f_{\scriptscriptstyle C}^{\scriptscriptstyle C},f_{\scriptscriptstyle R}^{\scriptscriptstyle S},f_{\scriptscriptstyle L}^{\scriptscriptstyle C}\}$. Using the algebraic manipulator Mathematica, we compute the Wronskian 
	$W(f_0, f_{\scriptscriptstyle C}^{\scriptscriptstyle C},$ $f_{\scriptscriptstyle R}^{\scriptscriptstyle S},f_{\scriptscriptstyle L}^{\scriptscriptstyle C})(h)$ and evaluate in some point on the interval $(0,1)$. More precisely, $W(f_0, f_{\scriptscriptstyle C}^{\scriptscriptstyle C},f_{\scriptscriptstyle R}^{\scriptscriptstyle S},f_{\scriptscriptstyle L}^{\scriptscriptstyle C})(0.2)=-4.26846$. Then, set of functions $\mathcal{F}_{\scriptscriptstyle CCS}$ is linearly independent on the interval $(0,1)$ and, by Proposition \ref{prop:li1}, there are $h_i\in(0,1)$, with $i=1,2,3$, such that $M(h_i)=0$. Therefore, for $0< \epsilon <<1$, the system \eqref{eq:01} has a unique limit cycle near $L_{h_i}$, for each $i=1,2,3$, i.e. the system \eqref{eq:01} has at least three limit cycles.
\end{proof}

\begin{corollary}[Case CCC -- Fig. \ref{fig:04} (a)]\label{ccc-a}
	Consider the system \eqref{eq:01} with $a_{\scriptscriptstyle L}=b_{\scriptscriptstyle R}=\beta_{\scriptscriptstyle L}=1$, $b_{\scriptscriptstyle L}=2$, $c_{\scriptscriptstyle L}=c_{\scriptscriptstyle R}=-1$ and $a_{\scriptscriptstyle R}=\beta_{\scriptscriptstyle R}=0$. Then, for $0<\epsilon<<1$, the system \eqref{eq:01} has at least three limit cycles.	
\end{corollary}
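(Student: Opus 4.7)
The plan is to follow the same strategy as Corollaries \ref{scs-a}--\ref{ccs-d}. First I would check the configuration by substituting the parameters into the linear parts of $\eqref{eq:01}|_{\epsilon=0}$: for the left subsystem $a_{\scriptscriptstyle L}^2+b_{\scriptscriptstyle L}c_{\scriptscriptstyle L}=-1$ so $\omega_{\scriptscriptstyle LC}=1$; for the right subsystem $a_{\scriptscriptstyle R}^2+b_{\scriptscriptstyle R}c_{\scriptscriptstyle R}=-1$ so $\omega_{\scriptscriptstyle RC}=1$; and the central subsystem is already in rotation form. All three linear parts have eigenvalues $\pm i$, confirming the CCC configuration. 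Solving $\nabla H^{\scriptscriptstyle L}=0$ places the left center at $(3,-2)\notin R_{\scriptscriptstyle L}$ and $\nabla H^{\scriptscriptstyle R}=0$ places the right center at $(0,0)\notin R_{\scriptscriptstyle R}$, so both lateral centers are virtual, matching Fig. \ref{fig:04}(a). Consequently $J=(0,\infty)$.

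Next I would invoke Theorem \ref{theo:ccc}. A direct substitution of the parameter values into \eqref{eq:func} yields
\begin{equation*}
f_0(h)=h,\qquad f_{\scriptscriptstyle C}^{\scriptscriptstyle C}(h)=(h^2+1)\arccos\!\left(\frac{h^2-1}{h^2+1}\right),
\end{equation*}
\begin{equation*}
f_{\scriptscriptstyle R}^{\scriptscriptstyle C}(h)=(h^2+1)\arccos\!\left(\frac{1-h^2}{1+h^2}\right),\qquad f_{\scriptscriptstyle L}^{\scriptscriptstyle C}(h)=4(h^2+4)\arccos\!\left(\frac{4-h^2}{h^2+4}\right),
\end{equation*}
so that $M(h)=k_0 f_0(h)+k_{\scriptscriptstyle C}^{\scriptscriptstyle C}f_{\scriptscriptstyle C}^{\scriptscriptstyle C}(h)+k_{\scriptscriptstyle R}^{\scriptscriptstyle C}f_{\scriptscriptstyle R}^{\scriptscriptstyle C}(h)+k_{\scriptscriptstyle L}^{\scriptscriptstyle C}f_{\scriptscriptstyle L}^{\scriptscriptstyle C}(h)$ on $(0,\infty)$. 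The coefficients $k_0,k_{\scriptscriptstyle C}^{\scriptscriptstyle C},k_{\scriptscriptstyle R}^{\scriptscriptstyle C},k_{\scriptscriptstyle L}^{\scriptscriptstyle C}$ are obtained exactly as in the proof of Theorem \ref{theo:scs}, and because they depend on disjoint groups of the perturbation parameters $p_i,q_i,r_i,s_i,u_i,v_i$ they can be prescribed independently.

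To finish, I would verify linear independence of $\{f_0,f_{\scriptscriptstyle C}^{\scriptscriptstyle C},f_{\scriptscriptstyle R}^{\scriptscriptstyle C},f_{\scriptscriptstyle L}^{\scriptscriptstyle C}\}$ on $(0,\infty)$ by evaluating the Wronskian $W(f_0,f_{\scriptscriptstyle C}^{\scriptscriptstyle C},f_{\scriptscriptstyle R}^{\scriptscriptstyle C},f_{\scriptscriptstyle L}^{\scriptscriptstyle C})$ at a convenient point (for instance $h=1$) in Mathematica and checking that the value is nonzero. Proposition \ref{prop:li1} then produces three distinct values $h_1,h_2,h_3\in(0,\infty)$ with $M(h_i)=0$, giving three limit cycles of \eqref{eq:01} near $L_{h_1},L_{h_2},L_{h_3}$, exactly as in the preceding corollaries.

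The main obstacle is the algebraic identity $f_{\scriptscriptstyle R}^{\scriptscriptstyle C}(h)+f_{\scriptscriptstyle C}^{\scriptscriptstyle C}(h)=\pi(h^2+1)$, which comes from $\arccos(x)+\arccos(-x)=\pi$ and reflects the accidental equality $H^{\scriptscriptstyle R}(x,y)=H^{\scriptscriptstyle C}(x,y)=(x^2+y^2)/2$ for this parameter choice. The identity places the polynomial $h^2+1$ inside $\operatorname{span}\{f_{\scriptscriptstyle C}^{\scriptscriptstyle C},f_{\scriptscriptstyle R}^{\scriptscriptstyle C}\}$, but does not force linear dependence of the full basis because $h^2+1\notin\operatorname{span}\{f_0,f_{\scriptscriptstyle L}^{\scriptscriptstyle C}\}$; the Wronskian evaluation confirms this. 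If one preferred a more transparent basis, one could equivalently use $\{f_0,\,h^2+1,\,f_{\scriptscriptstyle C}^{\scriptscriptstyle C},\,f_{\scriptscriptstyle L}^{\scriptscriptstyle C}\}$, whose linear independence is immediate from the transcendence of $f_{\scriptscriptstyle C}^{\scriptscriptstyle C}$ and $f_{\scriptscriptstyle L}^{\scriptscriptstyle C}$ over the polynomials.
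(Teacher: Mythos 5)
Your proposal follows exactly the paper's route: confirm the CCC configuration with both lateral centers virtual, specialize Theorem \ref{theo:ccc}, check a Wronskian numerically, and invoke Proposition \ref{prop:li1}. The one substantive difference is your $f_{\scriptscriptstyle R}^{\scriptscriptstyle C}(h)=(h^2+1)\arccos\big(\tfrac{1-h^2}{1+h^2}\big)$ versus the paper's printed $(h^2-1)\arccos\big(\tfrac{1-h^2}{1+h^2}\big)$; your version is the one actually produced by \eqref{eq:func} (and by a direct computation of $\int_{\widehat{AA_1}}g_{\scriptscriptstyle R}\,dx-f_{\scriptscriptstyle R}\,dy$, since $H^{\scriptscriptstyle R}=(x^2+y^2)/2$ here), so the paper's coefficient appears to be a slip. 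Your explicit treatment of the resulting identity $f_{\scriptscriptstyle R}^{\scriptscriptstyle C}+f_{\scriptscriptstyle C}^{\scriptscriptstyle C}=\pi(h^2+1)$ — showing the four functions remain independent because $h^2+1\notin\operatorname{span}\{f_0,f_{\scriptscriptstyle L}^{\scriptscriptstyle C}\}$ — is a welcome extra check that the conclusion of three limit cycles survives this degeneracy.
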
	
\begin{proof}
	For $a_{\scriptscriptstyle L}=b_{\scriptscriptstyle R}=\beta_{\scriptscriptstyle L}=1$, $b_{\scriptscriptstyle L}=2$, $c_{\scriptscriptstyle L}=c_{\scriptscriptstyle R}=-1$ and $a_{\scriptscriptstyle R}=\beta_{\scriptscriptstyle R}=0$, the eigenvalues of the linear part of the left, central and right subsystem from $\eqref{eq:01}|_{\epsilon=0}$ are $\pm i$, $\pm i$ and $\pm i$, respectively, i.e. we have three centers. Moreover, the coordinates of the equilibrium points of the left and right subsystem from $\eqref{eq:01}|_{\epsilon=0}$ are $(3,-2)$ and $(0,0)$, respectively, and so the centers are virtual. Therefore, the first order Melnikov function from Theorem \ref{theo:ccc} becomes
	$$M(h)=k_0f_0(h)+k_{\scriptscriptstyle C}^{\scriptscriptstyle C}f_{\scriptscriptstyle C}^{\scriptscriptstyle C}(h)+k_{\scriptscriptstyle R}^{\scriptscriptstyle C}f_{\scriptscriptstyle R}^{\scriptscriptstyle C}(h)+k_{\scriptscriptstyle L}^{\scriptscriptstyle C}f_{\scriptscriptstyle L}^{\scriptscriptstyle C}(h),\quad \forall \,h\in(0,\infty),$$
	with
	\begin{equation*}
		\begin{aligned}
			f_0(h)&= h, \\
			f_{\scriptscriptstyle C}^{\scriptscriptstyle C}(h)&=(h^2+1)\arccos\bigg(\frac{h^2-1}{h^2+1}\bigg),\\
			f_{\scriptscriptstyle R}^{\scriptscriptstyle C}(h)&=(h^2-1)\arccos\bigg(\frac{1-h^2}{h^2+1}\bigg),\\
			f_{\scriptscriptstyle L}^{\scriptscriptstyle C}(h)&=(h^2+4)\arccos\bigg(\frac{4-h^2}{h^2+4}\bigg),	
		\end{aligned}
	\end{equation*}	
	$$k_0=2(u_{\scriptscriptstyle C}-p_{\scriptscriptstyle C}+r_{\scriptscriptstyle R}-u_{\scriptscriptstyle L})-r_{\scriptscriptstyle L}-p_{\scriptscriptstyle L}-u_{\scriptscriptstyle R}+p_{\scriptscriptstyle R},\quad k_{\scriptscriptstyle C}^{\scriptscriptstyle C}=p_{\scriptscriptstyle C}+u_{\scriptscriptstyle C},$$
	$$ k_{\scriptscriptstyle R}^{\scriptscriptstyle S}=\frac{p_{\scriptscriptstyle R}+u_{\scriptscriptstyle R}}{2}\quad\text{and}\quad k_{\scriptscriptstyle L}^{\scriptscriptstyle S}=\frac{p_{\scriptscriptstyle L}+u_{\scriptscriptstyle L}}{2}.$$	
	Consider the set of functions $\mathcal{F}_{\scriptscriptstyle CCC}=\{f_0,f_{\scriptscriptstyle C}^{\scriptscriptstyle C},f_{\scriptscriptstyle R}^{\scriptscriptstyle C},f_{\scriptscriptstyle L}^{\scriptscriptstyle C}\}$. Using the algebraic manipulator Mathematica, we compute the Wronskian
	$W(f_0, f_{\scriptscriptstyle C}^{\scriptscriptstyle C},$ $f_{\scriptscriptstyle R}^{\scriptscriptstyle C},f_{\scriptscriptstyle L}^{\scriptscriptstyle C})(h)$ and evaluate in some point on the interval $(0,\infty)$. More precisely, $W(f_0, f_{\scriptscriptstyle C}^{\scriptscriptstyle C},f_{\scriptscriptstyle R}^{\scriptscriptstyle C},f_{\scriptscriptstyle L}^{\scriptscriptstyle C})(0.2)=-2.92151$. Then, set of functions $\mathcal{F}_{\scriptscriptstyle CCC}$ is linearly independent on the interval $(0,\infty)$ and, by Proposition \ref{prop:li1}, there are $h_i\in(0,\infty)$, with $i=1,2,3$, such that $M(h_i)=0$. Therefore, for $0< \epsilon <<1$, the system \eqref{eq:01} has a unique limit cycle near $L_{h_i}$, for each $i=1,2,3$, i.e. the system \eqref{eq:01} has at least three limit cycles.
\end{proof}

\begin{corollary}[Case CCC -- Fig. \ref{fig:04} (b)]\label{ccc-b}
	Consider the system \eqref{eq:01} with $a_{\scriptscriptstyle L}=b_{\scriptscriptstyle R}=1$, $\beta_{\scriptscriptstyle L}=-3$, $b_{\scriptscriptstyle L}=2$, $c_{\scriptscriptstyle L}=c_{\scriptscriptstyle R}=-1$ and $a_{\scriptscriptstyle R}=\beta_{\scriptscriptstyle R}=0$. Then, for $0<\epsilon<<1$, the system \eqref{eq:01} has at least three limit cycles.	
\end{corollary}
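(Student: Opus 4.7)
The proposal is to follow the same template established in the earlier cases (ccs-c), (ccs-d) and (ccc-a). First I would substitute the given parameter values $a_{\scriptscriptstyle L}=b_{\scriptscriptstyle R}=1$, $\beta_{\scriptscriptstyle L}=-3$, $b_{\scriptscriptstyle L}=2$, $c_{\scriptscriptstyle L}=c_{\scriptscriptstyle R}=-1$, $a_{\scriptscriptstyle R}=\beta_{\scriptscriptstyle R}=0$ into the linear parts of the three subsystems. Since $a_{\scriptscriptstyle L}^2+b_{\scriptscriptstyle L}c_{\scriptscriptstyle L}=1-2=-1<0$ and $a_{\scriptscriptstyle R}^2+b_{\scriptscriptstyle R}c_{\scriptscriptstyle R}=-1<0$, the eigenvalues of the left, central, and right subsystems are all $\pm i$, confirming the type CCC. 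Then I would solve $H_x=H_y=0$ for the left and right linear parts to locate the singular points; a quick calculation gives the left center at $(-5,2)\in R_{\scriptscriptstyle L}$ (real) and the right center at $(0,0)\in R_{\scriptscriptstyle C}$ (virtual), matching precisely Fig.~\ref{fig:04}~(b). Consequently $J=(0,\infty)$ by the proposition characterising the periodic annulus, and Theorem~\ref{theo:ccc} applies.

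Next I would specialise the functions $f_0, f_{\scriptscriptstyle C}^{\scriptscriptstyle C}, f_{\scriptscriptstyle R}^{\scriptscriptstyle C}, f_{\scriptscriptstyle L}^{\scriptscriptstyle C}$ defined in \eqref{eq:func} to these parameters. Using $\omega_{\scriptscriptstyle RC}=1$ and the fact that $a_{\scriptscriptstyle R}=\beta_{\scriptscriptstyle R}=0$, the function $f_{\scriptscriptstyle R}^{\scriptscriptstyle C}$ reduces to the same form obtained in Corollary~\ref{ccc-a}. For the left subsystem, with $\omega_{\scriptscriptstyle LC}=1$ and $(a_{\scriptscriptstyle L}^2+b_{\scriptscriptstyle L}\beta_{\scriptscriptstyle L})^2=25$, routine algebra collapses $f_{\scriptscriptstyle L}^{\scriptscriptstyle C}$ to a constant multiple of $(h^2+4)\arccos\bigl((4-h^2)/(h^2+4)\bigr)$, absorbed into the coefficient $k_{\scriptscriptstyle L}^{\scriptscriptstyle C}$. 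Thus the Melnikov function takes the explicit form
\begin{equation*}
M(h)=k_0 f_0(h)+k_{\scriptscriptstyle C}^{\scriptscriptstyle C} f_{\scriptscriptstyle C}^{\scriptscriptstyle C}(h)+k_{\scriptscriptstyle R}^{\scriptscriptstyle C} f_{\scriptscriptstyle R}^{\scriptscriptstyle C}(h)+k_{\scriptscriptstyle L}^{\scriptscriptstyle C} f_{\scriptscriptstyle L}^{\scriptscriptstyle C}(h),\quad h\in(0,\infty),
\end{equation*}
with coefficients depending affinely on the perturbation parameters $p_i, q_i, r_i, s_i, u_i, v_i$. I would record explicit formulae for $k_0, k_{\scriptscriptstyle C}^{\scriptscriptstyle C}, k_{\scriptscriptstyle R}^{\scriptscriptstyle C}, k_{\scriptscriptstyle L}^{\scriptscriptstyle C}$ by carrying out the four line integrals in \eqref{eq:mel01} along the explicit orbit parametrizations displayed in the proof of Theorem~\ref{theo:scs}, adapted to the CCC case.

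To conclude, I would verify that the set $\mathcal{F}_{\scriptscriptstyle CCC}=\{f_0, f_{\scriptscriptstyle C}^{\scriptscriptstyle C}, f_{\scriptscriptstyle R}^{\scriptscriptstyle C}, f_{\scriptscriptstyle L}^{\scriptscriptstyle C}\}$ is linearly independent on $(0,\infty)$ by computing its Wronskian $W(f_0, f_{\scriptscriptstyle C}^{\scriptscriptstyle C}, f_{\scriptscriptstyle R}^{\scriptscriptstyle C}, f_{\scriptscriptstyle L}^{\scriptscriptstyle C})(h)$ with Mathematica and evaluating at a convenient sample point (e.g. $h=0.2$, as in Corollary~\ref{ccc-a}). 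A non-zero value proves linear independence; Proposition~\ref{prop:li1} then yields coefficients $k_0, k_{\scriptscriptstyle C}^{\scriptscriptstyle C}, k_{\scriptscriptstyle R}^{\scriptscriptstyle C}, k_{\scriptscriptstyle L}^{\scriptscriptstyle C}$, realisable by an appropriate choice of the perturbation data, such that $M$ has three distinct simple zeros $h_1, h_2, h_3\in(0,\infty)$. By the last assertion of Theorem~2.2 (the Melnikov theorem), each simple zero produces a unique limit cycle of the perturbed system near $L_{h_i}$ for $0<\epsilon\ll 1$, giving at least three limit cycles.

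The main obstacle is not conceptual but computational: the key point is that changing $\beta_{\scriptscriptstyle L}$ from $1$ (Corollary~\ref{ccc-a}) to $-3$ shifts the left equilibrium from virtual to real, and one must check that the Wronskian does not collapse under this change. Since $\beta_{\scriptscriptstyle L}$ only enters $f_{\scriptscriptstyle L}^{\scriptscriptstyle C}$ through the already-cancelling prefactor and the argument of $\arccos$ (which, as shown above, reduces to the same $(4-h^2)/(h^2+4)$), the family $\mathcal{F}_{\scriptscriptstyle CCC}$ here actually coincides with that of Corollary~\ref{ccc-a}, so the non-vanishing of the Wronskian is inherited, and the argument goes through essentially verbatim.
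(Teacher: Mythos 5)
Your proposal is correct and follows essentially the same route as the paper: confirm the CCC configuration with the left center real at $(-5,2)$ and the right center virtual at $(0,0)$, specialize the Melnikov function from Theorem \ref{theo:ccc}, check linear independence of $\{f_0,f_{\scriptscriptstyle C}^{\scriptscriptstyle C},f_{\scriptscriptstyle R}^{\scriptscriptstyle C},f_{\scriptscriptstyle L}^{\scriptscriptstyle C}\}$ via a nonzero Wronskian evaluation, and invoke Proposition \ref{prop:li1} together with the simple-zero criterion. Your observation that the specialized family of functions coincides with that of Corollary \ref{ccc-a} (so the Wronskian computation carries over) is accurate and consistent with the paper's own numerics.
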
	
\begin{proof}
	For $a_{\scriptscriptstyle L}=b_{\scriptscriptstyle R}=1$, $\beta_{\scriptscriptstyle L}=-3$, $b_{\scriptscriptstyle L}=2$, $c_{\scriptscriptstyle L}=c_{\scriptscriptstyle R}=-1$ and $a_{\scriptscriptstyle R}=\beta_{\scriptscriptstyle R}=0$, the eigenvalues of the linear part of the left, central and right subsystem from $\eqref{eq:01}|_{\epsilon=0}$ are $\pm i$, $\pm i$ and $\pm i$, respectively, i.e. we have three centers. Moreover, the coordinates of the equilibrium points of the left and right subsystem from $\eqref{eq:01}|_{\epsilon=0}$ are $(-5,2)$ and $(0,0)$, respectively, and so we have a real center and a virtual center. Therefore, the first order Melnikov function from Theorem \ref{theo:ccc} becomes
	$$M(h)=k_0f_0(h)+k_{\scriptscriptstyle C}^{\scriptscriptstyle C}f_{\scriptscriptstyle C}^{\scriptscriptstyle C}(h)+k_{\scriptscriptstyle R}^{\scriptscriptstyle C}f_{\scriptscriptstyle R}^{\scriptscriptstyle C}(h)+k_{\scriptscriptstyle L}^{\scriptscriptstyle C}f_{\scriptscriptstyle L}^{\scriptscriptstyle C}(h),\quad \forall \,h\in(0,\infty),$$
	with
	\begin{equation*}
		\begin{aligned}
			f_0(h)&= h, \\
			f_{\scriptscriptstyle C}^{\scriptscriptstyle C}(h)&=(h^2+1)\arccos\bigg(\frac{h^2-1}{h^2+1}\bigg),\\
			f_{\scriptscriptstyle R}^{\scriptscriptstyle C}(h)&=(h^2-1)\arccos\bigg(\frac{1-h^2}{h^2+1}\bigg),\\
			f_{\scriptscriptstyle L}^{\scriptscriptstyle C}(h)&=(h^2+4)\arccos\bigg(\frac{4-h^2}{h^2+4}\bigg),	
		\end{aligned}
	\end{equation*}	
	$$k_0=2(u_{\scriptscriptstyle C}-p_{\scriptscriptstyle C}+r_{\scriptscriptstyle R}+u_{\scriptscriptstyle L})-r_{\scriptscriptstyle L}-u_{\scriptscriptstyle R}+p_{\scriptscriptstyle R}+3p_{\scriptscriptstyle L},\quad k_{\scriptscriptstyle C}^{\scriptscriptstyle C}=p_{\scriptscriptstyle C}+u_{\scriptscriptstyle C},$$
	$$ k_{\scriptscriptstyle R}^{\scriptscriptstyle S}=\frac{p_{\scriptscriptstyle R}+u_{\scriptscriptstyle R}}{2}\quad\text{and}\quad k_{\scriptscriptstyle L}^{\scriptscriptstyle S}=\frac{p_{\scriptscriptstyle L}+u_{\scriptscriptstyle L}}{2}.$$	
	Consider the set of functions $\mathcal{F}_{\scriptscriptstyle CCC}=\{f_0,f_{\scriptscriptstyle C}^{\scriptscriptstyle C},f_{\scriptscriptstyle R}^{\scriptscriptstyle C},f_{\scriptscriptstyle L}^{\scriptscriptstyle C}\}$. Using the algebraic manipulator Mathematica, we compute the Wronskian 
	$W(f_0, f_{\scriptscriptstyle C}^{\scriptscriptstyle C},$ $f_{\scriptscriptstyle R}^{\scriptscriptstyle C},f_{\scriptscriptstyle L}^{\scriptscriptstyle C})(h)$ and evaluate in some point on the interval $(0,\infty)$. More precisely, $W(f_0, f_{\scriptscriptstyle C}^{\scriptscriptstyle C},f_{\scriptscriptstyle R}^{\scriptscriptstyle C},f_{\scriptscriptstyle L}^{\scriptscriptstyle C})(0.5)=7.2124$. Then, set of functions $\mathcal{F}_{\scriptscriptstyle CCC}$ is linearly independent on the interval $(0,\infty)$ and, by Proposition \ref{prop:li1},  there are $h_i\in(0,\infty)$, with $i=1,2,3$, such that $M(h_i)=0$. Therefore, for $0< \epsilon <<1$, the system \eqref{eq:01} has a unique limit cycle near $L_{h_i}$, for each $i=1,2,3$, i.e. the system \eqref{eq:01} has at least three limit cycles.
\end{proof}	

\begin{corollary}[Case CCC -- Fig. \ref{fig:04} (c)]\label{ccc-c}
	Consider the system \eqref{eq:01} with $a_{\scriptscriptstyle L}=b_{\scriptscriptstyle R}=1$, $\beta_{\scriptscriptstyle L}=-3$, $b_{\scriptscriptstyle L}=\beta_{\scriptscriptstyle R}=2$, $c_{\scriptscriptstyle L}=c_{\scriptscriptstyle R}=-1$ and $a_{\scriptscriptstyle R}=0$. Then, for $0<\epsilon<<1$, the system \eqref{eq:01} has at least three limit cycles.	
\end{corollary}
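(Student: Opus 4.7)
The proof will follow the exact template of Corollaries \ref{ccc-a} and \ref{ccc-b}, with the only substantive change being that the right subsystem now has nonzero $\beta_R$. First I would verify the CCC configuration: for the given parameters, $a_L^2 + b_Lc_L = 1 - 2 = -1 < 0$ and $a_R^2 + b_Rc_R = 0 - 1 = -1 < 0$, so both outer linear parts have eigenvalues $\pm i$, and the central subsystem trivially does too. Solving $H_y^L = H_x^L = 0$ and $H_y^R = H_x^R = 0$ places the equilibria at $(-5,2) \in R_L$ and $(2,0) \in R_R$, so both outer centers are real, matching Fig.~\ref{fig:04}(c).

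Second, I would invoke Theorem \ref{theo:ccc} and write
\[ M(h) = k_0 f_0(h) + k_C^C f_C^C(h) + k_R^C f_R^C(h) + k_L^C f_L^C(h), \qquad h \in (0,\infty), \]
with the $f_j$ from \eqref{eq:func}. Since the left block has the same $(a_L, b_L, c_L, \beta_L)$ as in Corollary \ref{ccc-b}, the functions $f_0$, $f_C^C$, and $f_L^C$ are identical to the ones displayed there. The right block is new: with $\omega_{RC} = 1$ and $\beta_R = 2$, the formula for $f_R^C$ in \eqref{eq:func} produces an explicit closed form in $h$ containing an $\arccos$ term. I would then substitute the specific parameters into the expressions for the coefficients obtained in the proof of Theorem \ref{theo:ccc}, exhibiting $k_0, k_C^C, k_R^C, k_L^C$ as explicit linear combinations of the perturbation parameters $p_i, u_i, r_i$.

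Third, I would establish linear independence of $\mathcal{F}_{\scriptscriptstyle CCC} = \{f_0, f_C^C, f_R^C, f_L^C\}$ on $(0, \infty)$ by computing the Wronskian $W(f_0, f_C^C, f_R^C, f_L^C)(h)$ symbolically in Mathematica and evaluating it at a single conveniently chosen point $h_0 \in (0, \infty)$; a numerical value different from zero at $h_0$ suffices, exactly as in the previous corollaries.

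Finally, Proposition \ref{prop:li1} produces three points $h_1, h_2, h_3 \in (0, \infty)$ and a nonzero tuple $(k_0, k_C^C, k_R^C, k_L^C)$ realizing $M(h_i) = 0$ for $i = 1, 2, 3$; since the map from the perturbation parameters onto these coefficients is surjective (as already seen in Corollaries \ref{ccc-a}--\ref{ccc-b}), these zeros are attained by an admissible choice of $f$ and $g$, and the simple-zero criterion in the Melnikov theorem provides one limit cycle near each $L_{h_i}$. The main obstacle I anticipate is purely computational: keeping the symbolic expression of $f_R^C$ with $\beta_R = 2$ tractable enough that the Wronskian at the test point evaluates to a clean nonzero number. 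Because $\omega_{RC} = 1$ is preserved from the $\beta_R = 0$ case, the structural form of $f_R^C$ remains a polynomial of degree two in $h$ multiplied by an $\arccos$, and the Wronskian evaluation should be as decisive as in Corollary \ref{ccc-b}.
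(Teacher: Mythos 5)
Your proposal is correct and follows essentially the same route as the paper: verify the CCC configuration and the reality of the outer centers, invoke Theorem \ref{theo:ccc} to write $M(h)$ as a combination of $f_0, f_{\scriptscriptstyle C}^{\scriptscriptstyle C}, f_{\scriptscriptstyle R}^{\scriptscriptstyle C}, f_{\scriptscriptstyle L}^{\scriptscriptstyle C}$, check linear independence via a single nonzero Wronskian evaluation, and conclude with Proposition \ref{prop:li1} and the simple-zero criterion. Your added remark that one should confirm the coefficients $(k_0,k_{\scriptscriptstyle C}^{\scriptscriptstyle C},k_{\scriptscriptstyle R}^{\scriptscriptstyle C},k_{\scriptscriptstyle L}^{\scriptscriptstyle C})$ are independently realizable by admissible perturbations is a sensible precision that the paper leaves implicit, but it does not change the argument.
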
	
\begin{proof}
	For $a_{\scriptscriptstyle L}=b_{\scriptscriptstyle R}=1$, $\beta_{\scriptscriptstyle L}=-3$, $b_{\scriptscriptstyle L}=\beta_{\scriptscriptstyle R}=2$, $c_{\scriptscriptstyle L}=c_{\scriptscriptstyle R}=-1$ and $a_{\scriptscriptstyle R}=0$, the eigenvalues of the linear part of the left, central and right subsystem from $\eqref{eq:01}|_{\epsilon=0}$ are $\pm i$, $\pm i$ and $\pm i$, respectively, i.e. we have three centers. Moreover, the coordinates of the equilibrium points of the left and right subsystem from $\eqref{eq:01}|_{\epsilon=0}$ are $(-5,2)$ and $(2,0)$, respectively, and so the centers are real. Therefore, the first order Melnikov function from Theorem \ref{theo:ccc} becomes
	$$M(h)=k_0f_0(h)+k_{\scriptscriptstyle C}^{\scriptscriptstyle C}f_{\scriptscriptstyle C}^{\scriptscriptstyle C}(h)+k_{\scriptscriptstyle R}^{\scriptscriptstyle C}f_{\scriptscriptstyle R}^{\scriptscriptstyle C}(h)+k_{\scriptscriptstyle L}^{\scriptscriptstyle C}f_{\scriptscriptstyle L}^{\scriptscriptstyle C}(h),\quad \forall \,h\in(0,\infty),$$
	with
	\begin{equation*}
		\begin{aligned}
			f_0(h)&= h, \\
			f_{\scriptscriptstyle C}^{\scriptscriptstyle C}(h)&=(h^2+1)\arccos\bigg(\frac{h^2-1}{h^2+1}\bigg),\\
			f_{\scriptscriptstyle R}^{\scriptscriptstyle C}(h)&=(h^2-1)\arccos\bigg(\frac{1-h^2}{h^2+1}\bigg),\\
			f_{\scriptscriptstyle L}^{\scriptscriptstyle C}(h)&=(h^2+4)\arccos\bigg(\frac{4-h^2}{h^2+4}\bigg),	
		\end{aligned}
	\end{equation*}	
	$$k_0=2(u_{\scriptscriptstyle C}-p_{\scriptscriptstyle C}+r_{\scriptscriptstyle R}+u_{\scriptscriptstyle L})-r_{\scriptscriptstyle L}+u_{\scriptscriptstyle R}+3(p_{\scriptscriptstyle L}+p_{\scriptscriptstyle R}),\quad k_{\scriptscriptstyle C}^{\scriptscriptstyle C}=p_{\scriptscriptstyle C}+u_{\scriptscriptstyle C},$$
	$$ k_{\scriptscriptstyle R}^{\scriptscriptstyle S}=\frac{p_{\scriptscriptstyle R}+u_{\scriptscriptstyle R}}{2}\quad\text{and}\quad k_{\scriptscriptstyle L}^{\scriptscriptstyle S}=\frac{p_{\scriptscriptstyle L}+u_{\scriptscriptstyle L}}{2}.$$	
	Consider the set of functions $\mathcal{F}_{\scriptscriptstyle CCC}=\{f_0,f_{\scriptscriptstyle C}^{\scriptscriptstyle C},f_{\scriptscriptstyle R}^{\scriptscriptstyle C},f_{\scriptscriptstyle L}^{\scriptscriptstyle C}\}$. Using the algebraic manipulator Mathematica, we compute the Wronskian
	$W(f_0, f_{\scriptscriptstyle C}^{\scriptscriptstyle C},$ $f_{\scriptscriptstyle R}^{\scriptscriptstyle C},f_{\scriptscriptstyle L}^{\scriptscriptstyle C})(h)$ and evaluate in some point on the interval $(0,\infty)$. More precisely, $W(f_0, f_{\scriptscriptstyle C}^{\scriptscriptstyle C},f_{\scriptscriptstyle R}^{\scriptscriptstyle C},f_{\scriptscriptstyle L}^{\scriptscriptstyle C})(0.5)=7.2124$. Then, set of functions $\mathcal{F}_{\scriptscriptstyle CCC}$ is linearly independent on the interval $(0,\infty)$ and, by Proposition \ref{prop:li1}, there are $h_i\in(0,\infty)$, with $i=1,2,3$, such that $M(h_i)=0$. Therefore, for $0< \epsilon <<1$, the system \eqref{eq:01} has a unique limit cycle near $L_{h_i}$, for each $i=1,2,3$, i.e. the system \eqref{eq:01} has at least three limit cycles.
\end{proof}

\section{Acknowledgments}

The first author is partially supported by S\~ao Paulo Research Foundation (FAPESP) grants 19/10269-3 and 18/19726-5.
The second author is supported by CAPES grants 88882.434343/2019-01.

\addcontentsline{toc}{chapter}{Bibliografia}

\end{document}